\begin{document}
\title[Metric completions from hereditary rings]{Metric completions of triangulated categories from hereditary rings}

\author{\textsc{Cyril Matou\v sek}}
\address{Aarhus University,
Department of Mathematics,
Ny Munkegade 118,
8000 Aar\-hus~C,
Denmark}
\email{cyril.matousek@math.au.dk}

\thanks{This work was supported by a research grant (VIL42076) from VILLUM FONDEN}
\subjclass[2020]{13D09, 16E35, 16G10, 18G80}
\date{\today}

% = Abstract ========================================================
\begin{abstract}
The focus of this article is on metric completions of triangulated categories arising in the representation theory of hereditary finite dimensional algebras and commutative rings.
We explicitly describe all completions of bounded derived categories with respect to additive good metrics for two classes of rings - hereditary commutative noetherian rings and hereditary algebras of tame representation type over an algebraically closed field.
To that end, we develop and study the lattice theory of metrics on triangulated categories. Moreover, we establish a link between metric completions of bounded derived categories of a ring and the ring’s universal localisations.
\end{abstract}

\maketitle
\vspace{4ex}

{
\hypersetup{linkcolor=black}
\tableofcontents
}

\section{Introduction}
\label{sec:introduction}

Neeman's method of constructing a metric completion of a triangulated category, inspired by the Cauchy completion of a metric space, is one of the few (\cite{Balmer11}, \cite{BalmerStevenson21}, \cite{Beligiannis00}, \cite{Keller05}), rather scarce tools to build a new triangulated category from an existing old one. Developed in~\cite{Neeman18} as a particular case of Lawvere's norm on a category (\cite{Lawvere73}), the data of a metric on triangulated category postulate how far individual morphisms of the~category are from being isomorphisms.

Under the interpretation of a morphism close to an isomorphism being short, we can define Cauchy sequences of objects and morphisms as \(\N\)-directed diagrams with morphisms eventually getting shorter and shorter.
A completion of the triangulated category then consists of formal directed colimits of all Cauchy sequences, but restricted to only those objects which perceive short morphisms as actual isomorphisms - we call those special objects ``compactly supported''.
The crucial point of \cite{Neeman18} is that the metric completion satisfyies all the axioms of triangulated categories (\cite{Puppe62},\cite{Verdier96}) with the triangulated structure induced by the~original triangulated structure of the initial category.

Since its introduction in 2018, the technique of metric completions for triangulated categories has found broad range of applications in homological algebra, algebraic geometry, and related areas. In \cite{Neeman18} (and the updated version \cite{Neeman25A}), Neeman generalises Rickard's derived Morita theory results \cite{Rickard89} to noetherian, weakly approximable triangulated categories by expressing two triangulated categories as mutual metric completions of each other.
He then encapsulates this phenomenon by defining excellent metrics on triangulated categories in \cite{Neeman25B} with the property that the process of taking the completion and then the opposite category is an involution. 
In \cite{CummingsGratz24}, Cummings and Gratz calculate metric completions of Igusa-Todorov (\cite{IgusaTodorov15}) discrete cluster categories as subcategories of their Paquette-Yıldırım (\cite{PaquetteYildirim21}) combinatorial completion.
Sun and Zhang study recollements of completions of triangulated categories in \cite{SunZhang21}. 
Biswas, Chen, Manali Rahul, Parker, and Zheng formulate a necessary condition for an existence of a bounded $t$-structure on a~compactly generated triangulated using specific metric completions in \cite{BiswasChenRahulParkerZheng24}.
Manali Rahul (\cite{Rahul25A},\cite{Rahul25B}) also uses metric techniques to obtain semi-orthogonal decompositions of triangulated categories and Brown representability flavoured results in algebraic geometry.   

Despite the outstanding theoretical strength of the theorem establishing a triangulated structure on a metric completion, the completions are rather difficult to compute in practice for a general triangulated category. For example, the calculations of completions in \cite{CummingsGratz24} rely on the combinatorial nature of triangulated categories specific to the~setting of discrete cluster categories. Many other results involving metrics on triangulated categories express the respective completion as a triangulated subcategory of a bigger, ambient category called good extension (see~\ref{sssec:GoodExtensions}), whose existence, however, cannot be guaranteed for an arbitrary triangulated category. What is more, while the~definition of a metric is relatively generous and easy to satisfy, the wast majority of completion-related results utilise only very specific choices of metrics determined by either aisles, or coaisles of $t$\=/structures.

This lack of diverse examples of metrics and the corresponding completions serves as the main motivation behind this article.
We present a systematic study of metrics and metric completions for the bounded derived category \(\derived^b(\modf\dashmodule R)\) for a large class of hereditary rings $R$. We do not limit ourselves to metrics arising from $t$-structures only, but we explicitly describe the completions with respect to all possible additive good metrics on \(\derived^b(\modf\dashmodule R)\).

Technically, the data of a metric on a triangulated category \(\mathcal{S}\) consist of a family \(\mathcal{M}=\{B_n\}_{n\in\N}\) of subcategories of \(\mathcal{S}\) corresponding to cones of short morphisms. We use this specific data to describe the corresponding completions. The~key observation is that for an additive good metric \(\mathcal{M}=\{B_n\}_{n\in\N}\) on \(\derived^b(\modf\dashmodule R)\) the~intersection \(\mathcal{B}:=\bigcap_{n\in\N}B_n\) is a thick subcategory of \(\derived^b(\modf\dashmodule R)\). Such a thick subcategory corresponds to a universal localisation \(R\rightarrow R_{\mathcal{B}}\) uniquely determined by the~property \(\mathcal{B}\otimes^{\mathbf{L}}_{R}R_{\mathcal{B}}=0\) in \(\derived(R)\) (see \cite{KrauseStovicek10}), and there is an embedding of derived categories \(\derived(R_\mathcal{B})\xhookrightarrow{}\derived(R)\). With the existence of such an embedding kept in mind, we prove that the completion \(\mathfrak{S}_{\mathcal{M}}\big(\derived^b(\modf\dashmodule R)\big)\) of \(\derived^b(\modf\dashmodule R)\) with respect to \(\mathcal{M}\) is equivalent either to \(\derived^b(\modf\dashmodule R_{\mathcal{B}})\), or a thick subcategory of \(\mathcal{B}^{\perp}=\Ker\Hom_{\derived^b(\modf\dashmodule R)}(\mathcal{B},\blank)\subseteq\derived^b(\modf\dashmodule R)\) depending on whether the lattice-theoretic condition \(\mathcal{M}=\mathcal{M}_{\infty}\vee\mathcal{B}\) that the metric \(\mathcal{M}\) ``converges locally uniformly'' towards \(\mathcal{B}\) in terms of cohomology (see Section~\ref{sec:lattices}) is satisfied.

\begin{theorem}
\label{T:IntroductionMainTheorem}
Let $R$ be a connected hereditary ring and \(\mathcal{M}=\{B_n\}_{n\in\N}\) an additive good metric on \(\derived^b(\modf\dashmodule R)\).

\begin{enumerate}[label=\arabic*)] 
    \item (Theorem~\ref{T:DedekindCompletions}) Suppose that $R$ is a commutative noetherian ring. Then:
    \begin{enumerate}[label=\roman*)] 
        \item If \(\mathcal{B}\) is countably generated as a thick subcategory and \(\mathcal{M}=\mathcal{M}_{\infty}\vee\mathcal{B}\), then \(\mathfrak{S}_{\mathcal{M}}\big(\derived^b(\modf\dashmodule R)\big)\simeq\derived^b(\modf\dashmodule R_{\mathcal{B}})\).
        \item Otherwise, we have \(\mathfrak{S}_{\mathcal{M}}(\mathcal{S})\simeq\mathcal{B}^{\perp}\cap\,\mathfrak{T}\) where \(\mathfrak{T}\subseteq\derived^b(\modf\dashmodule R)\) is the~thick subcategory generated by all finitely generated torsion modules.
    \end{enumerate}
    \item (Theorem~\ref{T:TameCompletions}) Suppose that $R$ is a finite dimensional algebra of tame representation type over an algebraically closed field. Then:
    \begin{enumerate}[label=\roman*)]
        \item If \(\mathcal{B}\) is countably generated as a thick subcategory and \(\mathcal{M}=\mathcal{M}_{\infty}\vee\mathcal{B}\), then \(\mathfrak{S}_{\mathcal{M}}\big(\derived^b(\modf\dashmodule R)\big)\simeq\derived^b(\modf\dashmodule R_{\mathcal{B}})\).
        \item Otherwise, we have \(\mathfrak{S}_{\mathcal{M}}(\mathcal{S})\simeq\mathcal{B}^{\perp}\cap\,\mathfrak{R}\) where \(\mathfrak{R}\subseteq\derived^b(\modf\dashmodule R)\) is the~thick subcategory generated by all 
        regular modules.
    \end{enumerate}
\end{enumerate}
\end{theorem}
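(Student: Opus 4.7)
The plan is to handle both parts of the theorem through a unified lattice-theoretic framework, then specialise the final step to the two ring-theoretic settings. The starting point, already made explicit in the introduction, is that the thick subcategory $\mathcal{B}=\bigcap_{n\in\N}B_n$ of $\derived^b(\modf\dashmodule R)$ corresponds to a universal localisation $R\to R_{\mathcal{B}}$ equipped with an embedding $\derived(R_{\mathcal{B}})\hookrightarrow\derived(R)$. For hereditary rings the thick subcategories of $\derived^b(\modf\dashmodule R)$ admit transparent descriptions --- specialisation-closed subsets of $\mathrm{Spec}(R)$ in the commutative noetherian case, and combinatorial data on the stable tubes of the Auslander--Reiten quiver in the tame hereditary case --- so the structure of $\mathcal{B}$ and of $R_{\mathcal{B}}$ is essentially explicit in both settings, which is what will allow the same template of argument to be applied to both parts.

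For case (i), with $\mathcal{B}$ countably generated and $\mathcal{M}=\mathcal{M}_{\infty}\vee\mathcal{B}$, I would construct a comparison functor $\derived^b(\modf\dashmodule R_{\mathcal{B}})\to\mathfrak{S}_{\mathcal{M}}(\derived^b(\modf\dashmodule R))$ and prove it is an equivalence. Full faithfulness should follow from the fact that the image of $\derived(R_{\mathcal{B}})\hookrightarrow\derived(R)$ already lies in $\mathcal{B}^{\perp}$, so the relevant \emph{Hom} groups are preserved by the Cauchy-completion process. For essential surjectivity, I would use a countable generating system of $\mathcal{B}$ together with the identity $\mathcal{M}=\mathcal{M}_{\infty}\vee\mathcal{B}$ to refine an arbitrary Cauchy sequence into a cofinal $\N$-indexed subsequence whose formal colimit can be computed inside $\derived(R_{\mathcal{B}})$ itself. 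The countable generation is precisely what allows the approximation process to fit into an $\N$-indexed diagram, while the lattice identity supplies the shortness needed for Cauchyness.

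For case (ii), the containment of the completion in $\mathcal{B}^{\perp}$ is automatic, since compactly supported objects orthogonalise $\mathcal{B}$ by definition. The refinement into $\mathcal{B}^{\perp}\cap\mathfrak{T}$ or $\mathcal{B}^{\perp}\cap\mathfrak{R}$ should then come from analysing which modules can appear as cohomology of formal colimits of Cauchy sequences, regardless of which of the two hypotheses fails: because every bounded complex over a hereditary ring splits as the direct sum of its shifted cohomologies, the problem reduces to a module-theoretic one, and the ring-specific classification --- torsion modules for commutative hereditary noetherian rings and regular modules for tame hereditary algebras --- identifies exactly the reachable part. A striking aspect of the statement is that the answer in case (ii) depends only on $\mathcal{B}$ and not on the full metric $\mathcal{M}$, and this rigidity should itself be a consequence of the hereditary splitting reducing everything to the $\mathcal{B}$-dependent module-theoretic locus $\mathfrak{T}$ or $\mathfrak{R}$.

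The main obstacle I anticipate is pairing essential surjectivity in case (i) with the matching upper bound in case (ii): in case (i) one must concretely turn the abstract countable generation of $\mathcal{B}$ together with the lattice identity into an explicit Cauchy sequence realising a prescribed object of $\derived^b(\modf\dashmodule R_{\mathcal{B}})$, while in case (ii) one must show that the failure of either hypothesis genuinely prevents reaching objects outside $\mathfrak{T}$ or $\mathfrak{R}$. Both steps rely crucially on the hereditary hypothesis and on the ring-specific classification of thick subcategories, and it is natural to expect the two parts of the theorem to diverge precisely at this final step, where the combinatorial nature of thick subcategories differs significantly between commutative hereditary noetherian rings and tame hereditary algebras.
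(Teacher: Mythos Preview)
Your outline has the right large-scale shape but misses the two pivots on which the argument actually turns. In case~(i) the paper does not build a comparison functor and check essential surjectivity object by object. The lattice identity $\mathcal{M}=\mathcal{M}_\infty\vee\mathcal{B}$ is used to prove $\mathfrak{S}'_{\mathcal{M}}(\mathcal{S})=\mathfrak{S}'_{\mathcal{B}}(\mathcal{S})$ outright (Proposition~\ref{P:DecompostionMetricsCompletion}), reducing to the \emph{constant} metric $\mathcal{B}$; then countable generation is used once, in a small-object-argument Cauchy sequence starting at $R$ (Lemma~\ref{L:SmallObjectArgumentCauchySequences}), to place the single object $R_{\mathcal{B}}$ in the completion, after which idempotent completeness (Proposition~\ref{P:CompletionIsIdempotentComplete}) yields all of $\derived^b(\modf\dashmodule R_{\mathcal{B}})$. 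Your plan of refining arbitrary Cauchy sequences so that their colimits land in $\derived(R_{\mathcal{B}})$ is not obviously executable and in any event bypasses the real economy, which is the reduction to a constant metric.

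In case~(ii) your sketch hides the genuine obstacle. The argument bifurcates: if $\mathcal{M}=\mathcal{M}_\infty\vee\mathcal{B}$ with $\mathcal{B}$ not countably generated one again reduces to $\mathfrak{S}'_{\mathcal{B}}$ and invokes uncountability directly (Lemmas~\ref{L:UncountablyGeneratedCompletion} and~\ref{L:UncountablyGeneratedCompletionTame}). If instead $\mathcal{M}\neq\mathcal{M}_\infty\vee\mathcal{B}$ there is no such reduction, and one must manufacture obstructing modules $X_n\in B_n\setminus\thick\big(H^0(B_{n+1})\big)$ (Lemma~\ref{L:ObstructingObjects}) and use them, together with Lemma~\ref{L:LivingInCompletionGivenByThickSubcategory}, to trap any $E\in\mathfrak{S}'_{\mathcal{M}}$ between completions for two \emph{strictly nested} thick subcategories $\mathcal{D}\subsetneq\mathcal{C}$; a support-style argument (a prime outside $\Supp(\mathcal{D})$, resp.\ a homogeneous tube outside $\mathcal{D}$) then forces $E$ into $\mathfrak{T}$ resp.\ $\mathfrak{R}$. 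Your ``analyse cohomology of colimits'' does not produce this strict nesting, and without it nothing rules out projective or preprojective summands. You also omit the reverse inclusion $\mathcal{B}^\perp\cap\mathfrak{T}\subseteq\mathfrak{S}'_{\mathcal{M}}$ (and its $\mathfrak{R}$ analogue): one must show each such object is compactly supported at some finite $B_n$, not merely at $\mathcal{B}$, and this needs Lemma~\ref{L:BruningWideTheoremForMetrics} plus a chain-stabilisation argument rather than following from the definition.
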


This result complements author's earlier work \cite[Section~3]{Matousek26A} on completions for representation-finite algebras. For every finite dimensional algebra $A$ of finite representation finite, all the completions of \(\derived^b(\modf\dashmodule A)\) are just the thick subcategories of \(\derived^b(\modf\dashmodule A)\). Our current result tell us that once we abandon the~realm of representation-finite algebras and consider a tame algebra $R$ instead, we can actually encounter new elements in the~completion \(\mathfrak{S}'_{\mathcal{M}}\big(\derived^b(\modf\dashmodule R)\big)\), such as the~universal localisation \(R_\mathcal{B}\), which do not lie in the initial category \(\derived^b(\modf\dashmodule R)\).

While the fact that a metric completion of a triangulated category \(\mathcal{S}\) can, in fact, shrink to a subcategory of \(\mathcal{S}\) can seem counter-intuitive at the first glance, the~reason behind such a phenomenon resides in the two-step process of creating the~completion. As mentioned before, unlike the usual metric space completion, or e.g.\ the sequential completion of triangulated category (\cite{Krause20}), we are not only adding directed colimits of Cauchy sequences to our category during the construction, but we are also removing objects which are not compactly supported with respect to the~given metric. We discuss how compactly supported elements look like for bounded derived categories of rings in Section~\ref{sec:smash}, where we present their description using universal localisations for hereditary rings together with partial results for general commutative noetherian rings.

Section~\ref{sec:tools} consists of several auxiliary results about completions for bounded derived categories for finitely generated algebras over commutative noetherian rings, which are both useful for proving Theorem~\ref{T:IntroductionMainTheorem} and of an independent interest. For instance, for a hereditary ring, Proposition~\ref{P:CompletionIsIdempotentComplete} gives us that any completion with respect to an additive good metric is an idempotent complete triangulated category, while Proposition~\ref{P:CountablyGeneratedCompletion} fully describes completions with respect to constant metrics determined by countably generated thick subcategories.

The statement of Theorem~\ref{T:IntroductionMainTheorem} involves a lattice-theory condition on a metric on a triangulated category. The result that metrics form a lattice is new, and is proven in Section~\ref{sec:lattices}, together with further applications and examples linked to the lattices of metrics. In Proposition~\ref{P:DecompostionMetricsCompletion}, we give a precise formulation of the condition of a cohomological locally uniform convergence of a~metric towards a~specific thick subcategory, and show its consequences for the resulting metric completions.

Finally, Section~\ref{sec:CommutativeNoetherian} and Section~\ref{sec:TameAlgebras} are dedicated to proving the main result 
Theorem~\ref{T:IntroductionMainTheorem} in our two primary settings - hereditary commutative noetherian rings and hereditary tame algebras. We analyse additive good metrics on bounded derived categories using the ring's prime spectrum in the commutative case and the~structure of regular modules in the tame algebra case.

\subsubsection*{Acknowledgements}
I wish to express my gratitude to Sira Gratz for her supervision and for her
valuable comments regarding this paper. A special thanks goes to Jan Šťovíček for introducing me to the topic of universal localisations. I would also like to thank Isaac Bird, Charley Cummings, David Nkansah, Greg Stevenson, and Alexandra Zvonareva for useful discussions. This work was supported by a research grant (VIL42076)
from VILLUM FONDEN.

\section{Preliminaries}
\label{sec:preliminaries}

\subsection{Notation and conventions}
If $A$ is an object of a category \(\mathcal{A}\), we will only write \(A\in\mathcal{A}\) instead of \(A\in\Ob(\mathcal{A})\). 
Similarly, if \(\mathcal{A}\) and \(\mathcal{B}\) are categories, we will employ the shortcut \(\mathcal{A}\subseteq\mathcal{B}\) to express that \(\mathcal{A}\) is a subcategory of \(\mathcal{B}\).

For a class of objects \(\mathcal{C}\) of an additive category \(\mathcal{A}\),
the~full subcategory of \(\mathcal{A}\) of objects isomorphic
to~finite coproducts of objects from \(\mathcal{C}\) will be denoted as \(\coprodf(\mathcal{C})\). 

We will always implicitly denote the shift functor of any triangulated category as~\(\Sigma\).
Let \(\mathcal{S}\) be a triangulated category.
By~\(\mathcal{S}^c\) we mean the triangulated subcategory consisting of compact elements of \(\mathcal{S}\).
Given a class of objects \(\mathcal{C}\subseteq\mathcal{S}\), the~notation \(\langle\mathcal{C} \rangle\) stands for the triangulated subcategory of \(\mathcal{S}\) generated by \(\mathcal{C}\), while \(\thick(\mathcal{C})\) stands for the thick subcategory of \(\mathcal{S}\) generated by \(\mathcal{C}\).

All rings are assumed to be unital and associative.
A \(\kk\)-algebra over a commutative ring \(\kk\) is a ring \(R\) with a \(\kk\)-module structure determined by a ring homomorphism \(\kk\rightarrow R\) with the image lying in the centre of $R$.

\subsection{Metrics and completions}

Here we recall the most important important facts and definitions about completions of triangulated categories via metric techniques introduced by Neeman in \cite{Neeman25A} and \cite{Neeman20}. We also make use of additional terminology and notational conventions from \cite{CummingsGratz24}. 
In this subsection, we fix a~triangulated category \(\mathcal{S}\).

\subsubsection{Sequences}

\begin{definition}[{\cite[Definition~3.2, Subsection~3.2]{CummingsGratz24}}]
 An \(\N\)-directed diagram of the form \(E_1\xrightarrow{e_1} E_2 \xrightarrow{e_2} \cdots\) in \(\mathcal{S}\) is called \textit{a sequence} and denoted \(\mathbf{E}=(E_n,e_n)_{n\in\N}\). For \(n\leq m\in\N\), the map \(e_{n,m}\) denotes the composition \(e_m\circ\cdots\circ e_{n+1}\circ e_n\).

Let \(\mathbf{I}=(i_m)_{m\in\N}=i_1<i_2<\cdots\) be a strictly increasing sequence of natural numbers. Then we define \textit{a subsequence} \(\mathbf{E}_{\mathbf{I}}=\left(E_{i_n},e_{i_n,i_{n+1}}\right)_{n\in\N}\) of \(\mathbf{E}\).

Assume that for every \(n\in\N\) we have direct summand \(F_n\) of \(E_n\) together with the canonical inclusion \(\nu_n:F_n\rightarrow E_n\) and the canonical projection \(\pi_n:E_n\rightarrow F_n\). 
Then the sequence \(\mathbf{F}=(F_n,\pi_{n+1}\circ e_n\circ\nu_n)\) is called \textit{a component} of \(\mathbf{E}\).
\end{definition}

\begin{notation}
We denote the category of additive contravariant functors \(\mathcal{S}^{\opposite}\rightarrow\Ab\) as \(\Mod\dashmodule\mathcal{T}\). 
Furthermore, the notation for Yoneda's embedding \(X\mapsto\Hom_{\mathcal{S}}(\blank,X)\) is \(\Yoneda:\mathcal{S}\rightarrow\Mod\dashmodule\mathcal{S}\).
Here \(\Ab\) stands for the category of abelian groups.
\end{notation}

Following \cite[Definition~3.4]{CummingsGratz24}, to a sequence \(\mathbf{E}=(E_n,e_n)_{n\in\N}\) in \(\mathcal{S}\) we associate a~functor \(\moco\mathbf{E}:=\varinjlim \Yoneda(\mathbf{E})\in\Mod\dashmodule\mathcal{T}\) called \textit{the module colimit} of \(\mathbf{E}\), which is the directed colimit of the image of \(\mathbf{E}\) in \(\Mod\dashmodule\mathcal{S}\) under Yoneda's embedding.

\begin{definition}[{\cite[Definition~3.11, Definition~3.16]{CummingsGratz24}}]
Let \(\mathcal{W}\) be a replete subcategory of \(\mathcal{S}\).
Let \(\mathbf{E}=(E_n,e_n)_{n\in\N}\) be a sequence in \(\mathcal{T}\).
We say that \(\mathbf{E}\) \textit{stabilises at} \(\mathcal{W}\) \textit{from} \(N\in\N\) \textit{onwards} if for all \(N\leq n\leq m\) we have \(\cone e_{n,m}\in\mathcal{W}\).
We then simply say that \(\mathbf{E}\) \textit{stabilises at} \(\mathcal{W}\) if it stabilises at \(\mathcal{W}\) from some \(N\in\N\) onwards.

A functor \(F\in\Mod\dashmodule\mathcal{S}\) is \textit{compactly supported} at \(\mathcal{W}\) if \(F(\mathcal{W})=0\). 
In this spirit,
the sequence \(\mathbf{E
}\) is \textit{compactly supported} at \(\mathcal{W}\) if the functor \(\moco\mathbf{E}\) is such.

We say that \(\mathbf{E}\) is \(\mathcal{W}\)\textit{-trivial} if for every \(n\in\N\) the entry \(E_n\) does not have any non-zero direct summands which lie in \(\mathcal{W}\). 
\end{definition}

In particular, an object \(X\in\mathcal{S}\) is \textit{compactly supported} with respect to a replete subcategory \(\mathcal{W}\subseteq\mathcal{S}\) if \(\Yoneda(X)\) is compactly supported at \(\mathcal{W}\). This is by Yoneda's lemma equivalent to \(X\in\mathcal{W}^{\perp}\). Here we are employing the following notation of ``Hom-perpendicularity''  for a general additive category \(\mathcal{A}\) and a subcategory \(\mathcal{B}\subseteq\mathcal{A}\):
\begin{itemize}
    \item \(\mathcal{B}^{\perp}\) denotes the full subcategory \(\{A\in \mathcal{A}:\forall B\in\mathcal{B},\Hom_{\mathcal{A}}(B,A)=0\}\),
    \item \({}^{\perp}\mathcal{B}\) denotes the full subcategory \(\{A\in \mathcal{A}:\forall B\in\mathcal{B},\Hom_{\mathcal{A}}(A,B)=0\}\).
\end{itemize}

\subsubsection{Metrics on triangulated categories}
\begin{definition}[{\cite[Definition 1.2]{Neeman25A}}]
Let \(\mathcal{M}=\{B_n\}_{n\in\N}\) be a decreasing chain \(B_1\supseteq B_2 \supseteq B_3\supseteq\ldots\) of full subcategories of \(\mathcal{S}\).
We say that \(\mathcal{M}\) is \textit{a~good metric} on~\(\mathcal{S}\) if for every \(n\in\N\):
\begin{itemize}
    \item \(0\in B_n\),
    \item \(B_n\) is extension-closed, i.e.\ \(Y\in B_n\) whenever \(X,Z\in B_n\) and there exists a~triangle \(X\rightarrow Y\rightarrow Z\rightarrow\Sigma X\) in \(\mathcal{S}\), and
    \item \(\Sigma^{-1} B_{n+1}\cup B_{n+1}\cup \Sigma B_{n+1}\subseteq B_n\).
\end{itemize}
\end{definition}

The categories \(B_n\) of a metric \(\mathcal{M}=\{B_n\}_{n\in\N}\) should be thought as open balls (or open neighbourhoods) shrinking around the zero object. As the cone of every isomorphism is zero, metrics measure how close a morphism is to an isomorphism in the following sense - a morphism \(f:X\rightarrow Y\) in \(\mathcal{S}\) is ``of distance \(\leq \frac{1}{n}\) from an~isomorphism'' if \(\cone f\in B_n\).
With metrics, we can define Cauchy sequences as those sequences whose morphisms are progressively getting shorter.

\begin{definition}[{\cite[Definition~1.6]{Neeman25A},\cite[Remark~3.12]{CummingsGratz24}}]
Equip \(\mathcal{S}\) with a good metric \(\mathcal{M}=\{B_n\}_{n\in\N}\).
Let \(\mathbf{E}=(E_n,e_n)_{n\in\N}\) be a sequence in \(\mathcal{S}\).
We say that \(\mathbf{E}\) \textit{stabilises at} \(n\in\N\) \textit{with respect to} \(\mathcal{M}\) if it stabilises at \(B_n\).

We call \(\mathbf{E}\) \textit{a Cauchy sequence with respect to} \(\mathcal{M}\) is it stabilises at every \(n\in\N\) with respect to \(\mathcal{M}\).

We say that \(\mathbf{E}\) is $n$\textit{-trivial with respect to} \(\mathcal{M}\) for \(n\in\N\) if it is \(B_n\)-trivial.
\end{definition}

\begin{definition}[{\cite[Definition~1.11]{Neeman25A},\cite[Remark~3.12]{CummingsGratz24}}]
Equip \(\mathcal{S}\) with a good metric \(\mathcal{M}=\{B_n\}_{n\in\N}\). Let \(F\in\Mod\dashmodule\mathcal{S}\).
We say that \(F\) is \textit{compactly supported at} \(n\in\N\) \textit{with respect to} \(\mathcal{M}\) if it is compactly supported at \(B_n\). And we call $F$ \textit{compactly supported with respect to} \(\mathcal{M}\) if it is compactly supported at some \(n\in\N\) with respect to \(\mathcal{M}\).
\end{definition}

We will omit the name of the metric with respect to which we are considering the~stabilisation, the triviality, and the compactly supported elements if such a~metric is implicit. And again, any consideration for a sequence \(\mathbf{E}=(E_n,e_n)_{n\in\N}\) to be compactly supported refers to the functor \(\moco \mathbf{E}\), and for an object \(X\in\mathcal{S}\) it refers to the functor \(\Yoneda(X)\).

\begin{remark}
For a sequence \(\mathbf{E}=(E_n,e_n)_{n\in\N}\) to be Cauchy with respect to a~good metric \(\mathcal{M}=\{B_n\}_{n\in\N}\), it means that for every \(M\in\N\) there exists \(N\in\N\) such that for all \(N\leq n\leq m\), the morphism \(e_{n,m}\) is ``of distance \(\leq M\) from an~isomorphism'', i.e.\ \(\cone e_{n,m}\in B_M\).

By \cite[Lemma~1.9]{Neeman25A}, it is enough to test whether a sequence is Cauchy on the~morphisms \(e_n:E_n\rightarrow E_{n+1}\) only.
The sequence \(\mathbf{E}\) is Cauchy if for every \(M\in\N\) there exists \(N\in\N\) such that for all \(N\leq n\) we have \(\cone e_{n}\in B_M\).
\end{remark}

\begin{definition}[{\cite[Definition 1.2]{Neeman25A}}]
    Let \(\mathcal{M}=\{B_n\}_{n\in\N}\) and~\(\mathcal{N}=\{C_n\}_{n\in\N}\) be two metrics on \(\mathcal{S}\).
    We say that \(\mathcal{N}\) is \textit{finer} than \(\mathcal{M}\) if for every \(n\in\N\) there is \(m\in\N\) such that \(C_m\subseteq B_n\), and we denote this by~\(\mathcal{N}\leq\mathcal{M}\).

    We proclaim \(\mathcal{M}\) and \(\mathcal{N}\) to be \textit{equivalent} if
    \(
    \mathcal{M}\leq\mathcal{N}
    \)
    and
    \(
    \mathcal{N}\leq\mathcal{M}
    \).
\end{definition}

Good metrics with balls closed under direct summands have already been studied in \cite{CummingsGratz24}. We propose the following terminology for them.

\begin{definition}
Let \(\mathcal{M}=\{B_n\}_{n\in\N}\) be a good metric on \(\mathcal{S}\).
We say that \(\mathcal{M}\) is \textit{additive} if the ball \(B_n\) is closed under direct summands for all \(n\in\N\).
\end{definition}

By an abuse of notation, we will not distinguish between a triangulated subcategory \(\mathcal{C}\subseteq\mathcal{S}\) and the corresponding \textit{constant metric} \(\{\mathcal{C}\}_{n\in\N}\). Under this identification, additive constant metrics correspond precisely to thick subcategories of~\(\mathcal{S}\).

\subsubsection{Completions of triangulated categories}
\begin{definition}[{\cite[Definition 1.11]{Neeman25A}}]
Equip $\mathcal{S}$ with a~good metric \(\mathcal{M}\).
We define the following full subcategories of \(\Mod\dashmodule\mathcal{T}\) with respect to~\(\mathcal{M}\):
\begin{itemize}
\item the category \(\mathfrak{C}_\mathcal{M}(\mathcal{S})\) of all compactly supported functors from \(\Mod\dashmodule\mathcal{S}\),
\item \textit{the pre-completion} of \(\mathcal{S}\) as
\[\ \ \ \ \ \ \mathfrak{L}_{\mathcal{M}}(\mathcal{S}):=\left\{F\in\Mod\dashmodule\mathcal{S}:\exists \text{ a Cauchy sequence \(\mathbf{E}\) in \(\mathcal{S}\)}, F\simeq\moco\mathbf{E}\right\},\]
\item and \textit{the completion} of \(\mathcal{S}\) as \(\mathfrak{S}_{\mathcal{M}}(\mathcal{S}):=\mathfrak{C}_{\mathcal{M}}(\mathcal{S})\cap\mathfrak{L}_{\mathcal{M}}(\mathcal{S})\).
\end{itemize}
\end{definition}

If there is no space for confusion, we will omit the subscript \(\mathcal{M}\), and write only \(\mathfrak{C}(\mathcal{S})\), \(\mathfrak{L}(\mathcal{S})\), and \(\mathfrak{S}(\mathcal{S})\). And since the (pre)completion and the compactly supported elements are the same for two equivalent metrics, we will also write \(\mathfrak{C}_{\mathfrak{M}}(\mathcal{S})\), \(\mathfrak{L}_{\mathfrak{M}}(\mathcal{S})\), and \(\mathfrak{S}_{\mathfrak{M}}(\mathcal{S})\) for an equivalence class \(\mathfrak{M}\) of good metrics, while meaning the respective categories for any/all metrics from \(\mathfrak{M}\).
Also, for a thick subcategory \(\mathcal{C}\subseteq\mathcal{S}\), the~categories \(\mathfrak{C}_{\mathfrak{C}}(\mathcal{S})\), \(\mathfrak{L}_{\mathfrak{C}}(\mathcal{S})\), and \(\mathfrak{S}_{\mathfrak{C}}(\mathcal{S})\) are meant with respect to \(\mathcal{C}\) interpreted as a constant metric.

The most important fact about metric completions of triangulated categories is that they are triangulated categories themselves.

\begin{theorem}[{\cite[Theorem~2.14]{Neeman25A}}]
If \(\mathcal{S}\) is equipped with a good metric, then \(\mathfrak{S}(\mathcal{S})\) is a triangulated category.
\end{theorem}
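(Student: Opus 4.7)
The plan is to realise $\mathfrak{S}_{\mathcal{M}}(\mathcal{S})$ as an additive subcategory of the abelian category $\Mod\dashmodule\mathcal{S}$, equip it with a shift functor inherited from $\mathcal{S}$, and declare as distinguished triangles those arising as module colimits of termwise triangles of Cauchy sequences. The ambient abelian structure supplies the ``free'' portion of the argument; what needs to be built by hand is the compatibility between Cauchy sequences in $\mathcal{S}$ and the triangulated axioms.

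First I would verify closure properties. The shift $\Sigma$ extends to $\Mod\dashmodule\mathcal{S}$ by $(\Sigma F)(X):=F(\Sigma^{-1}X)$, and the metric axiom $\Sigma B_{n+1}\subseteq B_n$ guarantees that if $\mathbf{E}=(E_n,e_n)$ is Cauchy then so is $\Sigma\mathbf{E}=(\Sigma E_n,\Sigma e_n)$, since $\cone(\Sigma e_n)=\Sigma\cone(e_n)$ and the corresponding cones land in shifted balls. The dual inclusion $\Sigma^{-1}B_{n+1}\subseteq B_n$ makes $\mathfrak{C}_{\mathcal{M}}(\mathcal{S})$ stable under $\Sigma$. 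Additive closure is immediate: levelwise direct sums of Cauchy sequences are Cauchy (the $B_n$ are extension-closed and contain $0$) and compact support is preserved by direct sums.

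Next I would set up distinguished triangles. Call a diagram $\mathbf{E}^1\to\mathbf{E}^2\to\mathbf{E}^3$ of Cauchy sequences, connected by morphisms of sequences, a \emph{Cauchy triangle} if each row $E_n^1\to E_n^2\to E_n^3\to\Sigma E_n^1$ is distinguished in $\mathcal{S}$. A triangle in $\mathfrak{S}_{\mathcal{M}}(\mathcal{S})$ is then called distinguished if it is isomorphic to $\moco$ applied to a Cauchy triangle. That the result does live in $\mathfrak{S}_{\mathcal{M}}(\mathcal{S})$ follows because $\moco$ is exact (filtered colimits are exact in $\Mod\dashmodule\mathcal{S}$) and compact support at two vertices forces it on the third via the long exact sequence of functors, paired again with the extension-closedness of the $B_n$.

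The principal technical input, which I expect to be the main obstacle, is a lifting lemma: every morphism $f\colon \moco\mathbf{E}\to\moco\mathbf{F}$ in $\mathfrak{S}_{\mathcal{M}}(\mathcal{S})$ can, after passing to cofinal subsequences, be represented by an honest morphism of Cauchy sequences. Yoneda's lemma and the identification $\moco\mathbf{F}(E_n)\cong\varinjlim_m\Hom_{\mathcal{S}}(E_n,F_m)$ produce tentative lifts on each level, and the plan is to extract subsequences inductively, at each step using the Cauchy property to replace a non-commuting square by one whose defect lies in a prescribed $B_k$ and then absorbing these defects by further thinning. Once this lemma is available, TR1 follows by lifting $f$ and taking termwise cones; TR2 by levelwise rotation; TR3 by lifting both triangles onto a common cofinal index set and applying TR3 in $\mathcal{S}$ pointwise; and TR4 analogously by lifting the three octahedral morphisms simultaneously and applying the octahedron of $\mathcal{S}$ termwise. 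The hard part is the bookkeeping in these interleaved subsequence extractions, which the metric axioms $\Sigma^{\pm 1}B_{n+1}\subseteq B_n$ and extension-closedness of the $B_n$ are tailor-made to accommodate.
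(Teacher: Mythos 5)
First, note that the paper does not prove this statement at all: it is quoted verbatim from Neeman (\cite[Theorem~2.14]{Neeman25A}), so there is no in-paper proof to compare against; the comparison has to be with Neeman's argument, whose general shape (module colimits of Cauchy sequences, termwise triangles, a lifting step) your outline does share. Within that shape, however, there is a genuine gap, and it sits exactly where you locate the ``free'' part of the argument. Your plan verifies the axioms by ``applying TR3 in $\mathcal{S}$ pointwise'' and ``applying the octahedron of $\mathcal{S}$ termwise''. This does not work as stated: the fill-in maps supplied by TR3/TR4 in $\mathcal{S}$ are non-canonical, so the levelwise choices $\gamma_n\colon E^3_n\rightarrow F^3_n$ need not commute with the transition maps of the cone sequences, hence they do not form a morphism of sequences and do not induce a map $\moco\mathbf{E}^3\rightarrow\moco\mathbf{F}^3$ at all; no amount of passing to cofinal subsequences by itself removes this defect. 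The repair is precisely where compact support must enter the verification of the axioms, via the factorisation property (the paper's Lemma~\ref{L:BasicFactorisationProperty}, i.e.\ Neeman's Lemma~2.10): for a compactly supported cohomological target $F$ and a Cauchy sequence $\mathbf{E}$ stabilising deep enough, $\Hom_{\Mod\dashmodule\mathcal{S}}(\moco\mathbf{E},F)\simeq\Hom_{\Mod\dashmodule\mathcal{S}}\big(\Yoneda(E_n),F\big)$ for large $n$. This lets one build the fill-in (and compare different cone constructions) from a \emph{single} finite stage, with the uniqueness needed to check the resulting squares commute. In your outline compact support is only used for closure statements (stability under $\Sigma$, the third vertex of a triangle), never for the axioms themselves; as written, the same outline would ``prove'' that the pre-completion $\mathfrak{L}(\mathcal{S})$ is triangulated, which is false in general and is exactly why the completion is cut down to compactly supported objects.

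Two smaller corrections. The step you single out as the main obstacle, lifting a morphism $\moco\mathbf{E}\rightarrow\moco\mathbf{F}$ to a morphism of sequences after thinning, is in fact elementary and needs neither the metric nor compact support: since these are $\N$-indexed colimits computed pointwise, each composite $\Yoneda(E_n)\rightarrow\moco\mathbf{F}$ factors through a finite stage, and two maps equal in the colimit are equalised at a finite stage, so the squares can be made to commute on the nose by an inductive reindexing (there is no need for ``defects lying in a prescribed $B_k$''). Conversely, in the closure argument for the third vertex of a Cauchy triangle, what you actually need is the shift axiom $\Sigma^{\pm1}B_{n+1}\subseteq B_n$ in the long exact sequence, not extension-closedness of the balls; extension-closedness is instead what you need (together with the $3\times3$ lemma) to see that termwise cones of a morphism of Cauchy sequences again form a Cauchy sequence, a point your outline asserts but does not argue.
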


The triangulated structure on \(\mathfrak{S}(\mathcal{S})\) is naturally induced by the triangulated structure on \(\mathcal{S}\) - 
the distinguished triangles of \(\mathfrak{S}(\mathcal{S})\) are (compactly supported) module colimits of Cauchy sequences of distinguished triangles from \(\mathcal{S}\), and the~shift functor on \(\mathfrak{S}(\mathcal{S})\) is the restriction of \(\blank\circ\Sigma^{-1}:\Mod\dashmodule\mathcal{S}\rightarrow\Mod\dashmodule\mathcal{S}\) for the shift $\Sigma$ on~\(\mathcal{S}\).

The key idea behind the completions is that compactly supported homological functors from \(\Mod\dashmodule\mathcal{S}\) are precisely those which send ``morphisms close to isomorphisms'' in \(\mathcal{S}\) with respect to the given metric to isomorphisms in \(\Ab\).
Restricting the pre-completion to compactly supported elements then ensures that every morphism in the completion has its well-defined cone. 

This relation between Cauchy sequences and compactly supported elements is encapsulated in the following lemma:

\begin{lemma}[{\cite[Lemma 2.10]{Neeman25A}}]
\label{L:BasicFactorisationProperty}
Assume $\mathcal{S}$ is equipped with a~good metric.
Let \(F\in\Mod\dashmodule\mathcal{S}\) be a cohomological functor compactly supported at \(s\in\N\). Let \(E=\moco \mathbf{E}\in\mathfrak{L}(\mathcal{S})\) for~some Cauchy sequence \(\mathbf{E}=(E_n,e_n)_{n\in\N}\).

If \(\mathbf{E}\) stabilises at $s+1$ from some \(N\in\N\) onwards,
then
\[
\Hom_{\Mod\dashmodule\mathcal{S}}(\varphi_n,F):\Hom_{\Mod\dashmodule\mathcal{S}}(E,F)\rightarrow\Hom_{\Mod\dashmodule\mathcal{S}}\left(\Yoneda(E_n),F\right)
\]
is an isomorphism for all \(n\geq N\) where \(\varphi_n:\Yoneda(E_n)\rightarrow E\) is the~colimit injection. 
\end{lemma}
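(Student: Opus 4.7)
The plan is to convert the Hom-spaces in question into values of the functor $F$ using Yoneda's lemma and the universal property of the colimit, and then use the cohomological property of $F$ together with the defining inclusion of the good metric to show that the relevant transition maps are isomorphisms.

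First, by the very construction of $E$ as the module colimit $E = \varinjlim \Yoneda(E_n)$ in $\Mod\dashmodule\mathcal{S}$, the universal property of the directed colimit together with Yoneda's lemma yields natural isomorphisms
\[
\Hom_{\Mod\dashmodule\mathcal{S}}(E,F) \;\cong\; \varprojlim_n \Hom_{\Mod\dashmodule\mathcal{S}}(\Yoneda(E_n),F) \;\cong\; \varprojlim_n F(E_n),
\]
and similarly $\Hom_{\Mod\dashmodule\mathcal{S}}(\Yoneda(E_n),F)\cong F(E_n)$. Under these identifications, the map $\Hom(\varphi_n,F)$ becomes the canonical projection $\varprojlim_m F(E_m) \to F(E_n)$. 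So it suffices to prove that, for all $N\le n\le m$, the map $F(e_{n,m}):F(E_m)\to F(E_n)$ induced by $e_{n,m}:E_n\to E_m$ is an isomorphism; this will force the tower $\{F(E_m)\}_{m\geq N}$ to be essentially constant and hence its limit to coincide with any of its entries.

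To show that $F(e_{n,m})$ is an isomorphism for $N\le n\le m$, I would complete $e_{n,m}$ to a distinguished triangle
\[
E_n \xrightarrow{e_{n,m}} E_m \longrightarrow \cone e_{n,m} \longrightarrow \Sigma E_n
\]
and apply the cohomological functor $F$. Rotating appropriately and using that $F$ is contravariant, a segment of the resulting long exact sequence reads
\[
F(\Sigma^{-1}\cone e_{n,m}) \longrightarrow F(E_m) \xrightarrow{F(e_{n,m})} F(E_n) \longrightarrow F(\cone e_{n,m}).
\]
By the stabilisation assumption, $\cone e_{n,m}\in B_{s+1}$, and the good metric axiom $\Sigma^{-1}B_{s+1}\cup B_{s+1}\cup\Sigma B_{s+1}\subseteq B_s$ then gives in particular $\Sigma^{-1}\cone e_{n,m}\in B_s$. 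Since $F$ is compactly supported at $s$, i.e.\ $F(B_s)=0$, both the outer terms vanish, so $F(e_{n,m})$ is an isomorphism as desired.

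I expect the essentially only subtle point is the bookkeeping with the indices of the balls: the hypothesis that $\mathbf{E}$ stabilises at $s+1$ (rather than at $s$) is exactly what is needed so that both $F(\cone e_{n,m})$ and $F(\Sigma^{-1}\cone e_{n,m})$ land inside $F(B_s)=0$, thanks to the closure property $\Sigma^{-1}B_{s+1}\subseteq B_s$ of a good metric. Once this index shift is correctly accounted for, the rest is a formal consequence of Yoneda, the universal property of the colimit, and the long exact sequence associated to a cohomological functor.
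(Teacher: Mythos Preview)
Your argument is correct and is the standard one. The paper does not give its own proof of this lemma; it is quoted verbatim as \cite[Lemma~2.10]{Neeman25A}, so there is nothing to compare against beyond Neeman's original, which proceeds in exactly the way you outline.

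One cosmetic slip: in your displayed long exact sequence the two outer terms are interchanged. For the triangle $E_n\xrightarrow{e_{n,m}}E_m\to\cone e_{n,m}\to\Sigma E_n$ and a contravariant cohomological $F$, the relevant segment reads
\[
F(\cone e_{n,m})\longrightarrow F(E_m)\xrightarrow{F(e_{n,m})} F(E_n)\longrightarrow F(\Sigma^{-1}\cone e_{n,m}).
\]
This does not affect the conclusion, since both $\cone e_{n,m}\in B_{s+1}\subseteq B_s$ and $\Sigma^{-1}\cone e_{n,m}\in B_s$ are annihilated by $F$ regardless of which side they sit on.
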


\subsubsection{Good extensions}
\label{sssec:GoodExtensions}

Calculating completions of the triangulated category \(\mathcal{S}\) directly from the definition as subcategory of \(\Mod\dashmodule\mathcal{S}\) can be rather hard in practice. However, sometimes there exits a computational tool, the so-called good extension. 

\begin{definition}[{\cite[Notation~3.1]{Neeman25A}}]
Let \(\nu:\mathcal{S}\xhookrightarrow{}\mathcal{T}\) be a fully faithful triangulated functor.
By \textit{the restricted Yoneda functor} we mean \(\YonedaKatakana:\mathcal{T}\rightarrow\Mod\dashmodule\mathcal{S}\),
\( X\mapsto\Hom_{\mathcal{T}}\big(\nu(\blank),X\big)\).
\end{definition}

\begin{definition}[{\cite[Definition~3.5]{Neeman18}}]
Assume \(\mathcal{S}\) is equipped with a good metric~\(\mathcal{M}\).
Let \(\nu:\mathcal{S}\xhookrightarrow{}\mathcal{T}\) be a fully faithful triangulated functor, and denote \(\YonedaKatakana\) the~restricted Yoneda functor.
Then $\nu$ is \textit{a good extension with respect to} \(\mathcal{M}\) if:
\begin{itemize}
    \item \(\mathcal{T}\) has countable coproducts, and
    \item the canonical map \(\moco\mathbf{E}\rightarrow\YonedaKatakana\big( \hoco \nu(\mathbf{E})\big)\) in \(\Mod\dashmodule\mathcal{S}\) is an isomorphism for every Cauchy sequence \(\mathbf{E}\) in \(\mathcal{S}\).  
\end{itemize}   
\end{definition}

\begin{definition}[{\cite[Definition~3.11]{Neeman25A}}]
Let \(\nu:\mathcal{S}\xhookrightarrow{}\mathcal{T}\) be a good extension with respect to a good metric \(\mathcal{M}\) on \(\mathcal{S}\).
We define the following full subcategories of \(\mathcal{T}\):
\begin{itemize}
\item \(\mathfrak{C}_\mathcal{M}'(\mathcal{S}):=\YonedaKatakana^{-1}\big(\mathfrak{C}_{\mathcal{M}}(\mathcal{S})\big)\) where \(\YonedaKatakana\) is the~restricted Yoneda functor,
\item \(\mathfrak{L}_{\mathcal{M}}'(\mathcal{S}):=\left\{X\in\mathcal{T}:\exists \text{ a Cauchy sequence \(\mathbf{E}\) in }\mathcal{S}, X\simeq\hoco \nu(\mathbf{E})\right\}\),
\item \(\mathfrak{S}_{\mathcal{M}}'(\mathcal{S}):=\mathfrak{C}_{\mathcal{M}}'(\mathcal{S})\cap\mathfrak{L}_{\mathcal{M}}'(\mathcal{S})\).
\end{itemize}
\end{definition}

Note that by \cite[Observation~3.2]{Neeman25A}, for a metric \(\mathcal{M}=\{B_n\}_{n\in\N}\) we can alternatively describe \(\mathfrak{C}_\mathcal{M}'(\mathcal{S})\) as \(\bigcup_{n\in\N}\nu(B_n)^{\perp}\subseteq\mathcal{T}\).

As in the case of \(\mathfrak{C}(\mathcal{S})\), \(\mathfrak{L}(\mathcal{S})\), and \(\mathfrak{S}(\mathcal{S})\), we will not mention the metric while writing \(\mathfrak{C}'(\mathcal{S})\), \(\mathfrak{L}'(\mathcal{S})\), and \(\mathfrak{S}'(\mathcal{S})\) unless necessary.

If a good extension exists, we can calculate the completion as a triangulated subcategory of the good extension. 

\begin{theorem}[{\cite[Theorem 3.23]{Neeman25A}}]
\label{T:GoodExtensionComputationalTool}
Let \(\mathcal{S}\xhookrightarrow{}\mathcal{T}\) be a good extension with respect to a good metric on \(\mathcal{S}\).
Then the restricted Yoneda functor \(\YonedaKatakana\) determines a~well\=/defined triangulated equivalence \(\YonedaKatakana\restriction_{\mathfrak{S}'(\mathcal{S})}:\mathfrak{S}'(\mathcal{S})\rightarrow\mathfrak{S}(\mathcal{S})\).
\end{theorem}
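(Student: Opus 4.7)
The plan is to establish the equivalence in three stages: first verify that $\YonedaKatakana$ restricts to a functor $\mathfrak{S}'(\mathcal{S})\to\mathfrak{S}(\mathcal{S})$, then show this restriction is essentially surjective, and finally prove it is fully faithful. The triangulated structure transports automatically, since the distinguished triangles on both sides are, by definition, the compactly supported module/homotopy colimits of Cauchy sequences of triangles in $\mathcal{S}$, and the good extension isomorphism $\moco\mathbf{E}\simeq\YonedaKatakana(\hoco\nu(\mathbf{E}))$ translates one into the other.

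The first two stages reduce directly to the defining property of a good extension. If $X\in\mathfrak{L}'(\mathcal{S})$ with $X\simeq\hoco\nu(\mathbf{E})$ then $\YonedaKatakana(X)\simeq\moco\mathbf{E}\in\mathfrak{L}(\mathcal{S})$, and $X\in\mathfrak{C}'(\mathcal{S})=\YonedaKatakana^{-1}(\mathfrak{C}(\mathcal{S}))$ delivers compact support of $\YonedaKatakana(X)$. Conversely, given $F\in\mathfrak{S}(\mathcal{S})$ write $F\simeq\moco\mathbf{E}$ and set $X:=\hoco\nu(\mathbf{E})\in\mathfrak{L}'(\mathcal{S})$; the same isomorphism gives $\YonedaKatakana(X)\simeq F\in\mathfrak{C}(\mathcal{S})$, so $X\in\mathfrak{C}'(\mathcal{S})$ as well.

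The main work lies in full faithfulness. Fix $X=\hoco\nu(\mathbf{E})\in\mathfrak{S}'(\mathcal{S})$ and $Y\in\mathfrak{S}'(\mathcal{S})$, set $F:=\YonedaKatakana(Y)$, and pick $s\in\N$ with $F(B_s)=0$. Applying $\Hom_{\mathcal{T}}(\blank,Y)$ to the standard Milnor triangle $\bigoplus_n\nu(E_n)\to\bigoplus_n\nu(E_n)\to X$ yields the short exact sequence
\[
0\to{\varprojlim}^{1}_n\Hom_{\mathcal{T}}(\nu(E_n),\Sigma^{-1}Y)\to\Hom_{\mathcal{T}}(X,Y)\to\varprojlim_n\Hom_{\mathcal{T}}(\nu(E_n),Y)\to 0.
\]
Since $\mathbf{E}$ is Cauchy it stabilises at $s+1$ from some $N\in\N$ onwards, and the metric axiom $\Sigma^{-1}B_{s+1}\cup B_{s+1}\cup\Sigma B_{s+1}\subseteq B_s$ together with the long exact sequence of $F$ on each triangle $E_n\to E_{n+1}\to\cone e_n$ forces the transition maps in both inverse systems to be isomorphisms once $n\geq N$. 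Hence ${\varprojlim}^1=0$ and $\Hom_{\mathcal{T}}(X,Y)\simeq F(E_N)$. On the module side, Lemma~\ref{L:BasicFactorisationProperty} applied to the cohomological compactly supported functor $F$ and to the Cauchy sequence $\mathbf{E}$ identifies $\Hom_{\Mod\dashmodule\mathcal{S}}(\YonedaKatakana(X),F)\simeq F(E_N)$ via the colimit injection $\varphi_N:\Yoneda(E_N)\to\moco\mathbf{E}$.

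The principal obstacle will be verifying that these two identifications with $F(E_N)$ are induced by the same morphism, namely the one obtained by applying $\YonedaKatakana$ to a map $X\to Y$. This is a naturality argument comparing the universal property of $\hoco\nu(\mathbf{E})$ in $\mathcal{T}$ with that of $\moco\mathbf{E}$ in $\Mod\dashmodule\mathcal{S}$, mediated precisely by the good extension hypothesis. Once this diagram commutes, the restriction $\YonedaKatakana\restriction_{\mathfrak{S}'(\mathcal{S})}$ is the desired triangulated equivalence.
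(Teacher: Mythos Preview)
The paper does not supply its own proof of this theorem: it is quoted verbatim as \cite[Theorem~3.23]{Neeman25A} and used as a black box. There is therefore nothing in the present paper to compare your argument against.

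That said, your outline is the standard route to this result and matches Neeman's original argument in spirit. The Milnor $\varprojlim^1$ sequence applied to $\Hom_{\mathcal{T}}(\blank,Y)$ on the defining triangle of the homotopy colimit, combined with the eventual constancy of the inverse systems once $\mathbf{E}$ stabilises at $s+1$, is exactly how full faithfulness is established; Lemma~\ref{L:BasicFactorisationProperty} is the matching statement on the $\Mod\dashmodule\mathcal{S}$ side. One point you pass over quickly is the naturality square identifying the two copies of $F(E_N)$: in Neeman's treatment this is handled by observing that the canonical map $\moco\mathbf{E}\to\YonedaKatakana(\hoco\nu(\mathbf{E}))$ is compatible with the colimit injections on both sides, so the composite $\Yoneda(E_N)\to\moco\mathbf{E}\to\YonedaKatakana(X)$ agrees with $\YonedaKatakana$ applied to the structure map $\nu(E_N)\to X$. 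Your sketch acknowledges this obstacle without resolving it, but the resolution is routine once stated. The transport of the triangulated structure also deserves a sentence more care (one must check that $\hoco$ of a Cauchy sequence of triangles is again a triangle in $\mathcal{T}$, which uses that $\mathcal{T}$ is triangulated with countable coproducts), but this is again standard.
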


The prototypical example of a good extension is the inclusion of compact elements.

\begin{theorem}[{\cite[Example~3.9]{Neeman25A},\cite[Lemma~2.8]{Neeman96}}]
\label{T:PrototypicalGoodExtension}
Let \(\mathcal{T}\) be a triangulated category with coproducts. Then the inclusion \(\mathcal{T}^c\xhookrightarrow{}\mathcal{T}\) serves as a good extension with respect to any possible choice of a good metric on \(\mathcal{T}^c\).
\end{theorem}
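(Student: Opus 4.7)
The first requirement in the definition of a good extension---that $\mathcal{T}$ should have countable coproducts---holds by hypothesis, so the real task is to verify the isomorphism condition $\moco\mathbf{E}\xrightarrow{\sim}\YonedaKatakana\bigl(\hoco\nu(\mathbf{E})\bigr)$ for every Cauchy sequence $\mathbf{E}=(E_n,e_n)_{n\in\N}$ in $\mathcal{T}^c$, where $\nu$ denotes the inclusion $\mathcal{T}^c\hookrightarrow\mathcal{T}$. Evaluating at a compact object $C\in\mathcal{T}^c$ and using fullness of $\nu$, the claim reduces to showing that the canonical map
\[
\varinjlim_n \Hom_{\mathcal{T}}(C,E_n)\longrightarrow \Hom_{\mathcal{T}}\bigl(C,\hoco_n E_n\bigr)
\]
is an isomorphism of abelian groups for every $C\in\mathcal{T}^c$. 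In particular, the Cauchy hypothesis on $\mathbf{E}$ plays no role, which is precisely what allows the conclusion to hold uniformly with respect to \emph{any} good metric on $\mathcal{T}^c$.

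The plan is to establish this via the Milnor-type defining triangle
\[
\coprod_{n\in\N} E_n \xrightarrow{\,1-\mathrm{shift}\,}\coprod_{n\in\N} E_n\longrightarrow \hoco_n E_n\longrightarrow \Sigma\coprod_{n\in\N} E_n
\]
in $\mathcal{T}$, where $\mathrm{shift}$ is assembled from the structure maps $e_n$ and the inclusions of summands. Applying the cohomological functor $\Hom_{\mathcal{T}}(C,\blank)$ yields a long exact sequence, and compactness of $C$ allows us to pull the functor through the coproducts, identifying $\Hom_{\mathcal{T}}(C,\coprod_n E_n)$ with $\bigoplus_n \Hom_{\mathcal{T}}(C,E_n)$. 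Under this identification, $(1-\mathrm{shift})_*$ becomes the standard presentation morphism for the sequential colimit in $\Ab$, whose cokernel is $\varinjlim_n\Hom_{\mathcal{T}}(C,E_n)$ and whose kernel vanishes. Splicing the resulting exact sequence produces the desired isomorphism of abelian groups.

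The only subtlety---and it is not a genuine obstacle---is to confirm that the isomorphism just constructed coincides with the canonical comparison map $\moco\mathbf{E}\to\YonedaKatakana(\hoco\nu(\mathbf{E}))$ supplied by the universal property of $\moco\mathbf{E}$. This follows from naturality: both source and target arise as cokernels of compatible $(1-\mathrm{shift})$ morphisms---in $\Mod\dashmodule\mathcal{T}^c$ on the left, and in $\Ab$ pointwise on the right---and the colimit injections $\Yoneda(E_n)\to\moco\mathbf{E}$ match the cocone maps $E_n\to\hoco_n E_n$ after application of $\YonedaKatakana$. Thus the whole verification is purely formal once the Milnor presentation of $\hoco$ is available.
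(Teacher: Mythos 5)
Your proof is correct and follows essentially the same route as the source the paper cites for this statement (the paper itself gives no proof, quoting \cite[Lemma~2.8]{Neeman96}): evaluating the comparison map at each compact object and running the Milnor triangle for \(\hoco\) through \(\Hom_{\mathcal{T}}(C,\blank)\), using compactness and the standard short exact sequence \(0\rightarrow\bigoplus_n A_n\xrightarrow{1-\mathrm{shift}}\bigoplus_n A_n\rightarrow\varinjlim_n A_n\rightarrow0\), is exactly the classical argument. Your observation that the Cauchy condition is irrelevant, and your check that the resulting isomorphism is the canonical comparison map, are both accurate.
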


\subsection{{\normalfont{\textit{t}}}-structures and approximation sequences}

The notion of a $t$-structure on a triangulated category was introduced in \cite[Définition~1.3.1]{BeilinsonBernsteinDeligne82}, see \cite{BeilinsonBernsteinDeligneGabber18} for a~republished version.
In this subsection, we recall its definition and the construction of approximation sequences for a Cauchy sequence. 

\begin{definition}
Let \(\left(\mathcal{S}^{\leq0},\mathcal{S}^{>0}\right)\) be a pair of full subcategories of a triangulated category \(\mathcal{S}\). Assume that \(\mathcal{S}^{\leq0}\) and \(\mathcal{S}^{>0}\) are closed under direct summands. We fix a notation \(\mathcal{S}^{\leq n}:=\Sigma^{-n}\mathcal{S}^{\leq0}\) and \(\mathcal{S}^{> n}:=\Sigma^{-n}\mathcal{S}^{>0}\) for all \(n\in\Z\).

We call \(\left(\mathcal{S}^{\leq0},\mathcal{S}^{>0}\right)\) a \textit{t-structure} on \(\mathcal{S}\) if:
\begin{itemize}
    \item \(\mathcal{S}^{>0}\subseteq(\mathcal{S}^{\leq0})^{\perp}\),
    \item for every \(Y\in\mathcal{S}\) there exist \(X\in\mathcal{S}^{\leq0}\), \(Z\in\mathcal{S}^{>0}\), and a distinguished triangle
    \[X\rightarrow Y\rightarrow Z\rightarrow\Sigma X,\]
    \item and \(\mathcal{S}^{\leq-1}\subseteq\mathcal{S}^{<0}\).
\end{itemize}

In this case, the subcategory \(\mathcal{S}^{\leq0}\) is called \textit{an aisle}, and \(\mathcal{S}^{>0}\) is called \textit{a~coaisle}.
\end{definition}

For a $t$-structure \(\left(\mathcal{S}^{\leq0},\mathcal{S}^{>0}\right)\) on \(\mathcal{S}\) and \(Y\in\mathcal{S}\), the distinguished triangle
    \[X\rightarrow Y\rightarrow Z\rightarrow\Sigma X\]
with \(X\in\mathcal{S}^{\leq0}\) and  \(Z\in\mathcal{S}^{>0}\) is unique up to an isomorphism and is usually called \textit{the approximation triangle}. This name is justified by the fact that \(X\rightarrow Y\) is an~\(\mathcal{S}^{\leq0}\)\=/cover of $Y$ (or a minimal right \(\mathcal{S}^{\leq0}\)-approximation). The~\(\mathcal{S}^{\leq0}\)\=/cover given by an approximation triangle is functorial, we denote this \textit{truncation functor} as \(\tr^{\leq 0}\). 
Dually \(Y\rightarrow Z\) is an~\(\mathcal{S}^{>0}\)-envelope of $Y$ (a minimal left \(\mathcal{S}^{>0}\)\=/approximation) with a corresponding functor  \(\tr^{>0}\). The same approximation theory works for the~shifts of the truncation functors \(\tr^{\leq n}\) and \(\tr^{>n}\).

For the theory of metric completions, it is important to consider approximation sequences of (primarily Cauchy) sequences.
\begin{definition}[{\cite[Definition~4.4]{CummingsGratz24}}]
Let \(\left(\mathcal{S}^{\leq0},\mathcal{S}^{>0}\right)\) be a $t$-structure on a triangulated category \(\mathcal{S}\), and let \(\mathbf{E}=(E_n,e_n)_{n\in\N}\) be a sequence in \(\mathcal{S}\). Fix \(m\in\Z\).

We call the sequence \(\tr^{\leq m}\mathbf{E}=\left(\tr^{\leq m}E_n,\tr^{\leq m}e_n\right)_{n\in\N}\) \textit{an} \(\mathcal{S}^{\leq m}\)\textit{-approximation} of \(\mathbf{E}\) and the sequence \(\tr^{> m}\mathbf{E}=\big(\tr^{> m}E_n,\tr^{> m}e_n\big)_{n\in\N}\) \textit{an} \(\mathcal{S}^{> m}\)\textit{-approximation} of~\(\mathbf{E}\).
\end{definition}

Even if \(\mathbf{E}\) is a Cauchy sequence with respect to some good metric, the approximation sequences do not have to be Cauchy in general. We can see some positive cases, however, in \cite[Section~4]{CummingsGratz24}.

\subsection{Derived categories}

Let $R$ be a ring.
The category of right $R$-modules is denoted \(\ModR\), the notation \(\modf\dashmodule R\) is reserved for the~category of finitely presented right $R$-modules. The category of finitely presented indecomposable modules is denoted \(\ind\dashmodule R\).

We will mainly work with two triangulated categories associated to $R$, namely \(\derived(R)=\derived(\ModR)\), the derived category of $R$, and \(\derived^b(\modf\dashmodule R)\), the bounded derived category of finitely presented $R$-modules. 
The category of compact elements \(\derived(R)^c\subseteq\derived(R)\) is equal to the category of perfect complexes \(\perfect(R)\subseteq\derived(R)\) over $R$, i.e.\ complexes quasi-isomorphic to bounded complexes of finitely generated projective $R$-modules.
The category \(\perfect(R)\) is thus also equal to the essential image of the embedding \(\HomotopyCategory^b(\proj\dashmodule R)\xhookrightarrow{}\derived(R)\) of the bounded homotopy category of finitely generated projective $R$-modules.

By~Theorem~\ref{T:PrototypicalGoodExtension}, the inclusion \(\perfect(R)\xhookrightarrow{}\derived(R)\) is a good extension with respect to all possible good metrics on~\(\perfect(R)\).
This good extension is a strong computational tool for completions of \(\perfect(R)\), giving us \(\mathfrak{S}'\big(\perfect(R)\big)\subseteq\derived(R)\). We can even say more for $R$ hereditary.

\begin{proposition}[{\cite[Proposition~3.1]{Matousek26A}}]
\label{P:BoundedHereditaryCompletion}
Assume $R$ is right hereditary. Then for every good metric on \(\perfect(R)\), the completion \(\mathfrak{S}'\big(\perfect(R)\big)\subseteq\derived(R)\) is a triangulated subcategory of the bounded derived category of (all) $R$-modules \(\derived^b(\ModR)\).
\end{proposition}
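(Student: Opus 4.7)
Let $X\in\mathfrak{S}'\big(\perfect(R)\big)\subseteq\derived(R)$, so $X\simeq\hoco\nu(\mathbf{E})$ for some Cauchy sequence $\mathbf{E}=(E_n,e_n)_{n\in\N}$ in $\perfect(R)$ with respect to the fixed good metric $\mathcal{M}=\{B_n\}_{n\in\N}$, and $X$ is compactly supported, which by the description $\mathfrak{C}'(\mathcal{S})=\bigcup_{n}\nu(B_n)^{\perp}$ recalled just before Theorem~\ref{T:GoodExtensionComputationalTool} means $\Hom_{\derived(R)}\big(\nu(B_s),X\big)=0$ for some $s\in\N$. The goal is to prove $X\in\derived^b(\ModR)$.

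The first reduction uses the hereditary hypothesis. Since $\gldim R\leq1$, every object $Y\in\derived(R)$ canonically splits as $Y\simeq\bigoplus_{i\in\Z}\Sigma^{-i}H^i(Y)$, and consequently $Y\in\derived^b(\ModR)$ if and only if $H^i(Y)=0$ for $|i|\gg0$. Moreover, taking cohomology of the defining triangle $\bigoplus\nu(E_n)\to\bigoplus\nu(E_n)\to X$ of the homotopy colimit and using exactness of directed colimits in $\ModR$ yields $H^i(X)\cong\varinjlim_nH^i(E_n)$. So it suffices to bound the range of $i$ for which this colimit is non-zero.

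To bring in compact support, I apply Lemma~\ref{L:BasicFactorisationProperty} to $F=\YonedaKatakana(X)\in\Mod\dashmodule\perfect(R)$; this functor coincides with $\moco\mathbf{E}$ since $\perfect(R)\hookrightarrow\derived(R)$ is a good extension (Theorem~\ref{T:PrototypicalGoodExtension}). After truncating finitely many terms, $\mathbf{E}$ stabilises at $B_{s+1}$ from some index $N$ onwards, so $\varphi_N:\nu(E_N)\to X$ becomes ``universal among $B_s$-compactly supported test maps''. Completing $\varphi_N$ to a distinguished triangle $\nu(E_N)\to X\to C\to\Sigma\nu(E_N)$ in $\derived(R)$ and using the octahedral axiom, the object $C$ is a homotopy colimit over $n\geq N$ of the cones $\cone\big(\nu(E_N)\to\nu(E_n)\big)$, each of which lies in $\nu(B_s)\subseteq\nu\big(\perfect(R)\big)\subseteq\derived^b(\ModR)$ for $n$ sufficiently large. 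Since $\nu(E_N)$ has bounded cohomology, the long exact sequence in cohomology reduces the claim to showing $H^i(C)=0$ for $|i|\gg0$.

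The main obstacle is exactly this last bound on $C$: each term $\cone\big(\nu(E_N)\to\nu(E_n)\big)$ individually has bounded cohomology, but these ranges could grow with $n$ and therefore the homotopy colimit $C$ need not a priori be cohomologically bounded. To overcome this I would refine the Cauchy sequence using the good-metric axiom $\Sigma^{\pm1}B_{n+1}\subseteq B_n$, which allows one to propagate membership in $B_s$ across cohomological degrees, and combine it with the hereditary splitting $E_n\simeq\bigoplus_{i}\Sigma^{-i}H^i(E_n)$ to peel off the cohomological degrees of the $E_n$'s one at a time. The compact support condition then forces the colimits $\varinjlim_nH^i(E_n)$ to vanish outside a finite range controlled by $E_N$ and $s$. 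This is the technical core of the argument and the place where the hereditary hypothesis is indispensable—it both produces the degree-by-degree splitting and guarantees that the resulting cohomology modules assemble back into an object of $\derived(R)$ via the canonical decomposition.
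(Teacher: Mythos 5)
The paper does not actually reprove this statement---it cites \cite[Proposition~3.1]{Matousek24}---so your attempt has to stand on its own, and as written it has a genuine gap at exactly the point you yourself flag as ``the technical core''. Your reductions are fine: compact support at some $B_s$, the hereditary splitting $Y\simeq\coprod_i\Sigma^{-i}H^i(Y)$, the identification $H^i(X)\cong\varinjlim_n H^i(E_n)$, and the observation that the cones $C_n:=\cone\big(e_{N,n-1}\big)$ lie in a fixed ball once the sequence stabilises. But the final step---that compact support ``forces'' $\varinjlim_n H^i(E_n)=0$ outside a finite range---is only asserted, and the route you hint at is problematic: ``propagating membership in $B_s$ across cohomological degrees'' via $\Sigma^{\pm1}B_{n+1}\subseteq B_n$ together with the splitting of the $E_n$ (or of the $C_n$) into shifted cohomologies would require the balls to be closed under direct summands, which is \emph{not} assumed here; the proposition is stated for arbitrary good metrics, not additive ones, so you cannot place $\Sigma^{-i}H^i(C_n)$ back inside any ball. (The detour through Lemma~\ref{L:BasicFactorisationProperty} and the presentation of $C$ as a homotopy colimit of cones also plays no role in what follows and needs non-canonical compatible choices of cones; it can be dropped.)

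The missing idea is that you never need the summands of the cones to lie in a ball---you only need a nonzero map out of the whole cone. Concretely: after discarding finitely many terms assume $\cone e_{n,m}\in B_s$ for all $N\leq n\leq m$, and let $[a,b]$ be the cohomological range of the perfect complex $E_N$. For $i\notin[a-1,b]$ the long exact sequence of $E_N\to E_n\to C_n$ gives $H^i(E_n)\cong H^i(C_n)$. If $H^i(X)=\varinjlim_n H^i(E_n)$ were nonzero for such an $i$, some structure map $H^i(E_n)\to H^i(X)$ would be nonzero, i.e.\ $\Hom_R\big(H^i(C_n),H^i(X)\big)\neq0$; since $R$ is hereditary, $\Hom_{\derived(R)}\big(\Sigma^{-i}H^i(C_n),X\big)$ surjects onto $\Hom_R\big(H^i(C_n),H^i(X)\big)$ and $\Sigma^{-i}H^i(C_n)$ is a direct summand of $C_n$, so $\Hom_{\derived(R)}(C_n,X)\neq0$ with $C_n\in B_s$, contradicting $X\in\nu(B_s)^{\perp}$. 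Hence $H^i(X)=0$ for $i\notin[a-1,b]$ and $X\in\derived^b(\ModR)$, after which the triangulated-subcategory claim is immediate. Without an argument of this kind (or some substitute), your plan does not yet constitute a proof.
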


\textit{The~standard t-structure} \(\left(\derived(R)^{\leq0},\derived(R)^{>0}\right)\)
on \(\derived(R)\) is determined by the~coho\-mol\-o\-gy functor \(H^0\simeq\Hom_{\derived(R)}(R,\blank):\derived(R)\rightarrow\Ab\) via
\[
\derived(R)^{\leq0}:=\left\{X\in\derived(R):\forall i\in\Z,i>0\Rightarrow H^i(X)=0\right\}
.\]
If $R$ is noetherian and of finite global dimension, then \(\derived^b(\modf\dashmodule R)\simeq\perfect(R)\), the standard $t$-structure on \(\derived(R)\) restricts to a $t$-structure on \(\derived^b(\modf\dashmodule R)\), and the~inclusion \(\derived^b(\modf\dashmodule R)\xhookrightarrow{}\derived(R)\) becomes a good extension with respect to all possible good metrics on \(\derived^b(\modf\dashmodule R)\).

Assume for the rest of the subsection that $R$ is noetherian and hereditary.
The~noetherian condition ensures that \(\modf\dashmodule R\) is an abelian category, and we may consider its wide subcategories. \textit{A wide} subcategory of an abelian category can be characterised as a subcategory closed under taking extensions, kernels, and cokernels (see {\cite[Section~1]{Hovey01},\cite[Lemma~4.2]{Bruning07}}). 
In particular, a wide subcategory is closed under images and direct summands.
By \(\wide(\mathcal{C})\), we will denote the smallest wide subcategory of \(\modf\dashmodule R\) containing \(\mathcal{C}\subseteq\modf\dashmodule R\).

By Brüning's \cite[Theorem~5.1]{Bruning07}, the assignment \(\mathcal{C} \mapsto H^0(\mathcal{C})\) estabilishes a bijection 
\[
\{\text{thick subcategories of }\derived^b(\modf\dashmodule R)\} \leftrightarrow \{\text{wide subcategories of }\modf\dashmodule R
\}
.\]
As $R$ is hereditary, every object \(X\in\derived(R)\) is quasi-isomorphic to its cohomology \(X\simeq\coprod_{i\in\Z}\Sigma^{-i}H^i(X)\). Thus the preimage of a wide subcategory \(\mathcal{W}\subseteq\modf\dashmodule R\) under the aforementioned bijection is 
\(\thick(\mathcal{W})\subseteq\derived^b(\modf\dashmodule R)\).

\subsection{Commutative algebra}

Let $R$ be a commutative ring.
The \textit{Zariski topology} on the prime spectrum \(\Spec(R)\) is the topology where closed sets are of the form \(V(I)=\{\mathfrak{p} \in \Spec(R) : I \leq \mathfrak{p} \} \) for ideals \( I \leq R \).
A subset of \(\Spec(R)\) is \textit{specialisation closed} if it is a union of closed sets.
The topological space \(\Spec(R)\) is connected if and only if $R$ is connected as a ring.

For every prime ideal \(\mathfrak{p}\in\Spec(R)\), we denote \(k(\mathfrak{p}):=R_{\mathfrak{p}}/\mathfrak{p}R_{\mathfrak{p}}\)  its \textit{residue field}. A~finitely generated $R$-module \(M\) is \textit{supported} at \(\mathfrak{p}\in\Spec(R)\) if its localisation \(M_{\mathfrak{p}}\) at \(\mathfrak{p}\) is non-zero. For an arbitrary complex \(X\in\derived(R)\), we the employ the generalisation of \textit{a support} due to Foxby \cite{Foxby79} 
\(\Supp(X):=\left\{\mathfrak{p}\in\Spec(R):X\otimes^\mathbf{L}_Rk(\mathfrak{p})\neq0\right\}\). 
For a~class \(\mathcal{C}\subseteq\derived(R)\), we set \(\Supp(\mathcal{C}):=\bigcup_{C\in\mathcal{C}}\Supp(C)\).
A subset \(\Phi\subset\Spec(R)\) is called \textit{coherent} \cite{Krause08} if for every \(X\in\derived(R)\) it holds that \(\Supp(X)\subseteq\Phi\) if and only if \(\Supp\left(\coprod_{i\in\Z}H^i(X)\right)\subseteq\Phi\).

\begin{lemma}
\label{L:ProjectiveSupportIsLocallyConstant}
Let \(P\) be a finitely generated projective $R$-module.
Then the rank function \(\Spec(R)\rightarrow\N_0\), \(\mathfrak{p}\mapsto \dim_{k(\mathfrak{p})} P\otimes_Rk(\mathfrak{p})\) is locally constant.

In particular, if \(\Phi\) is a connected component of \(\Spec(R)\) and \(\Supp(P)\cap\Phi\neq\emptyset\), then \(\Phi\subseteq\Supp(P)\).
\end{lemma}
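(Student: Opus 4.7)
The plan is to exploit the classical fact that a finitely generated projective module over a commutative ring is locally free of finite rank, from which local constancy of the rank function is immediate. The second assertion then follows formally from the connectedness of $\Phi$.

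First, fix $\mathfrak{p}\in\Spec(R)$. Since $P$ is finitely generated projective over $R$, the localisation $P_\mathfrak{p}$ is a finitely generated projective $R_\mathfrak{p}$-module, and hence free of some finite rank $r$ over the local ring $R_\mathfrak{p}$. Reducing modulo the maximal ideal yields
\[
\dim_{k(\mathfrak{p})} P\otimes_R k(\mathfrak{p}) \;=\; \dim_{k(\mathfrak{p})} P_\mathfrak{p}/\mathfrak{p}P_\mathfrak{p} \;=\; r,
\]
so the value of the rank function at $\mathfrak{p}$ coincides with the rank of the free module $P_\mathfrak{p}$.

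Next, I would promote this pointwise freeness to freeness on an open neighbourhood of $\mathfrak{p}$. Choose $x_1,\dots,x_r\in P$ whose images form an $R_\mathfrak{p}$-basis of $P_\mathfrak{p}$, and consider the induced morphism $\varphi:R^r\rightarrow P$; it is an isomorphism after localising at $\mathfrak{p}$. As $P$ is finitely generated projective, it is finitely presented (being a direct summand of a finitely generated free module), so its cokernel is finitely generated and therefore vanishes on some principal open $D(f)$ containing $\mathfrak{p}$. On $D(f)$ the map $\varphi$ is surjective, and since $P_f$ is projective the surjection splits, exhibiting the kernel as a finitely generated direct summand of $R_f^{\,r}$ whose stalk at $\mathfrak{p}$ is trivial. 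A further shrinking to some $D(fg)$ kills the kernel as well, so $\varphi$ restricts to an isomorphism on a neighbourhood of $\mathfrak{p}$ and the rank function is constantly equal to $r$ there.

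For the in-particular statement, recall that $P$ is flat, hence $P\otimes^{\mathbf{L}}_R k(\mathfrak{p})=P\otimes_R k(\mathfrak{p})$; consequently $\mathfrak{p}\in\Supp(P)$ precisely when the rank at $\mathfrak{p}$ is positive. A locally constant function to $\N_0$ with the discrete topology is continuous, so its restriction to a connected component $\Phi$ is constant. If $\Supp(P)\cap\Phi\neq\emptyset$ this constant value is at least $1$, and therefore $\Phi\subseteq\Supp(P)$. The only delicate point in the argument is the shrinking step: without a noetherian hypothesis on $R$ one must invoke that both the cokernel and the kernel of $\varphi$ are finitely generated, which is precisely what finite presentation of $P$ supplies.
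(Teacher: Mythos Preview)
Your proof is correct. The paper's own proof is a one-line citation to Bourbaki's \emph{Alg\`ebre commutative}, Chapitre~II, \S5, Th\'eor\`eme~1, so strictly speaking you have supplied what the paper outsources. The argument you give---local freeness of finitely generated projectives over a local ring, then spreading out over a principal open using finite presentation---is exactly the classical proof recorded in Bourbaki, so the two approaches coincide in substance; yours is simply self-contained whereas the paper opts for a reference. Your handling of the ``in particular'' clause is also correct and slightly more explicit than the paper, which leaves it implicit: you note that flatness of $P$ collapses the derived tensor in the Foxby support to the ordinary tensor, so $\Supp(P)$ is the preimage of $\N_{\geq 1}$ under the rank function, and connectedness of $\Phi$ finishes.
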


\begin{proof}
This is the implication ``a) $\Rightarrow$ c)'' in \cite[Chapitre II, \S 5, \textnumero2, Théorème~1]{Bourbaki06}.   
\end{proof}

Our main focus in this paper is on hereditary noetherian rings.
If the commutative ring $R$ is such, it decomposes as a finite direct product \(R\simeq \prod_{n=1}^ND_n\) of Dedekind domains \(D_1,\ldots,D_N\) for some \(N\in\N\); see for example \cite[Corollary~5.5]{Ando24}.
Lemma~\ref{L:ProjectiveSupportIsLocallyConstant} then tell us that \(\Spec(D_1),\ldots,\Spec(D_N)\) are precisely the~connected components of \(\Spec(R)\).
Therefore, we can often reduce our reasoning to the case of Dedekind domains.

There are several equivalent definitions of \textit{a Dedekind domain}; one such characterisation is that it is a hereditary integral domain. In particular, every principal ideal domain is a Dedekind domain.
Some authors do not allow a field to be considered a~Dedekind domain; however, we do not impose such a restriction.

For the rest of this subsection, we fix a Dedekind domain $D$.
The structure of finitely generated $D$-modules is well-understood. 

\begin{theorem}
\label{T:DedekindFinitelyGeneratedModules}
Every finitely generated $D$\=/module is a direct sum of a projective module $P$ and a torsion module $T$ of the~form \(T\simeq \bigoplus_{i=1}^nR/\mathfrak{p}_i^{l_i}\) for some \(n\in\N_0\), \(\mathfrak{p}_1,\ldots,\mathfrak{p}_n\in\Spec(D)\), and \(l_1,\ldots,l_n\in\N\). This decomposition is unique up to isomorphisms and reordering of the summands.
\end{theorem}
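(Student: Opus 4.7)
The plan is to split the proof into two conceptual steps: first separate the torsion from the non-torsion part via a single short exact sequence, then explicitly decompose the torsion part using the Dedekind ideal structure and the PID structure theorem after localisation.

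For the first step, let $M$ be a finitely generated $D$-module and let $T := \{m\in M : \mathrm{ann}_D(m)\neq 0\}$ be its torsion submodule (well-defined since $D$ is a domain). The quotient $M/T$ is torsion-free and finitely generated, so it embeds into a finitely generated free $D$-module: pick a maximal $D$-linearly independent subset of generators to get a free submodule $F\subseteq M/T$, then clear denominators for the remaining generators to see that multiplication by some $0\neq d\in D$ sends $M/T$ into $F$, and this map is injective by torsion-freeness. Since $D$ is hereditary, every submodule of a free (hence projective) module is projective, so $M/T$ is projective. Consequently the short exact sequence $0\to T\to M\to M/T\to 0$ splits, giving $M\simeq P\oplus T$ with $P := M/T$ projective.

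For the second step, I would decompose $T$. Its annihilator $I := \mathrm{ann}_D(T)$ is a nonzero ideal (as $T$ is finitely generated of bounded torsion), and by the Dedekind property it factors uniquely as $I=\mathfrak{p}_1^{e_1}\cdots\mathfrak{p}_k^{e_k}$ with distinct primes $\mathfrak{p}_i$. The Chinese Remainder Theorem applied to the pairwise comaximal ideals $\mathfrak{p}_i^{e_i}$ yields $D/I\simeq \prod_i D/\mathfrak{p}_i^{e_i}$, and the corresponding orthogonal idempotents transport to a primary decomposition $T\simeq\bigoplus_i T_{\mathfrak{p}_i}$, where $T_{\mathfrak{p}_i}$ is the $\mathfrak{p}_i$-primary component. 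Each $T_{\mathfrak{p}_i}$ is annihilated by a power of $\mathfrak{p}_i$, so it is naturally a module over the localisation $D_{\mathfrak{p}_i}$, which is a discrete valuation ring and in particular a PID. The classical structure theorem over PIDs then gives $T_{\mathfrak{p}_i}\simeq\bigoplus_j D_{\mathfrak{p}_i}/\mathfrak{p}_i^{l_{i,j}}D_{\mathfrak{p}_i}\simeq\bigoplus_j D/\mathfrak{p}_i^{l_{i,j}}$, using the canonical identification $D/\mathfrak{p}_i^{l}\simeq D_{\mathfrak{p}_i}/\mathfrak{p}_i^{l}D_{\mathfrak{p}_i}$. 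Combining these produces the desired form for $T$.

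Uniqueness is formal: the class of $P$ is forced since $P\simeq M/T$ and $T\subseteq M$ is intrinsically defined; the primary components $T_{\mathfrak{p}_i}\subseteq T$ are equally intrinsic as the $\mathfrak{p}_i$-local torsion; and within each $T_{\mathfrak{p}_i}$ the multiset of exponents $\{l_{i,j}\}$ coincides with the classical invariant factors, recoverable from the $k(\mathfrak{p}_i)$-dimensions of $\mathfrak{p}_i^{j}T_{\mathfrak{p}_i}/\mathfrak{p}_i^{j+1}T_{\mathfrak{p}_i}$. I do not foresee a genuine obstacle: the entire argument rests on two structural pillars already available in the preliminaries, namely hereditariness (giving the splitting in step one) and the fact that localisations of $D$ at nonzero primes are DVRs (reducing step two to the PID case). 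The only place demanding a bit of care is the identification of $\mathfrak{p}_i$-primary $D$-modules with $D_{\mathfrak{p}_i}$-modules, which is clean but worth spelling out when writing the final version.
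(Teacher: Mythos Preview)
Your proof is correct and follows the standard textbook route. The paper itself does not prove this theorem: it is stated in the preliminaries as a classical structural result about Dedekind domains, quoted without proof and used only as background. Your argument---splitting off the projective part via hereditariness of $D$, then reducing the torsion part to the PID case by primary decomposition and localisation at the relevant maximal ideals---is exactly the canonical proof one finds in standard references, so there is nothing to compare against beyond noting that the paper takes the result for granted.
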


Thus the support of a finitely generated module $M$ over $D$ is either the whole \(\Spec(D)\) if $M$ contains a non-zero projective direct summand, or finite. 

Let $P$ and $Q$ be indecomposable finitely generated projective $D$-modules. Then $P$ is generated by either one element (and is isomorphic to $D$) or two elements, and it holds that \(P\oplus Q\simeq D\oplus (P\otimes_{D}Q)\). This makes the tensor product \(\otimes_D\) into a~commutative group operation on \textit{the Picard group} \(\Pic(D)\) of $D$, which consists of equivalence classes of indecomposable finitely generated projective $D$-modules up isomorphisms and free direct summands. The structure of indecomposable finitely generated projectives can be rather complicated; in fact, any abelian group is the~Picard group of some Dedekind domain \cite{Claborn66}.

\subsection{Finite dimensional algebras}
Let $K$ be an algebraically closed field. By Drozd's trichotomy, every finite dimensional $K$-algebra is either of representation-finite type, tame, or wild. Every connected hereditary finite dimensional $K$-algebra is Morita equivalent to a path algebra $KQ$ for some finite acyclic connected quiver~$Q$, and we can determine its representation type based on the underlying diagram of the quiver~$Q$ - the algebra is representation-finite if and only if the diagram is simply laced Dynkin (type \(\mathbb{A},\mathbb{D}\), and \(\mathbb{E}\)); and it is tame if and only if the diagram is Euclidean \big(or extended Dynkin - type \(\tilde{\mathbb{A}},\tilde{\mathbb{D}}\), or \(\tilde{\mathbb{E}}\)\big).

In this subsection, we fix an Euclidean quiver $Q$ with \(N\in\N\) verticies, and we shall consider the tame algebra \(KQ\). Both \(\modf\dashmodule KQ\) and \(\mathcal{S}:=\derived^b(\modf\dashmodule KQ)\) are $K$-linear Krull-Schmidt categories.

Consider the Auslander-Reiten translation \(\tau:\mathcal{S}\rightarrow\mathcal{S}\) (see e.g.\ \cite{ReitenVanderbergh02}).
The category \(\ind\dashmodule KQ\) (up to isomorphisms) consists of three components:
\begin{itemize}[leftmargin=5mm] 
    \item the preprojectives \[
    \mathcal{P}:=\left\{M\in\ind\dashmodule KQ:\exists P\in\ind\dashmodule KQ, P\text{ projective},\exists n\in\N_0,M\simeq\tau^{-n}P\right\}
    ,\]
    \item the preinjectives \begin{align*}
    \mathcal{Q}&:=\left\{M\in\ind\dashmodule KQ:\exists I\in\ind\dashmodule KQ, I\text{ injective},\exists n\in\N_0,M\simeq\tau^{n}I\right\}\\
    &=\left\{M\in\ind\dashmodule KQ:\exists P\in\ind\dashmodule KQ, P\text{ projective},\exists n\in\N,M\simeq\tau^{n}\Sigma P\right\}
    \text{, and}\end{align*}
    \item the regular modules \(\mathcal{R}\).
\end{itemize}

The morphisms in \(\ind\dashmodule KQ\) go only in one direction, meaning that \(\mathcal{P}\subseteq\mathcal{R}^{\perp}\) and \(\mathcal{P},\mathcal{R}\subseteq\mathcal{Q}^{\perp}\) in \(\modf\dashmodule KQ\).

The regular component \(\coprodf(\mathcal{R})=\wide(\mathcal{R})\simeq \coprod_{i\in\ProjectiveLine{K}} \mathbf{t}_i\) decomposes as a~coproduct of so-called tubes indexed by the projective line \(\ProjectiveLine{K}\).
All the tubes are mutually \textit{orthogonal}, i.e.\ \(\mathbf{t}_i,\Sigma\mathbf{t}_i\subseteq \mathbf{t}_j^\perp\) in \(\mathcal{S}\) for all \(i\neq j\in\ProjectiveLine{K}\).
Each tube \(\mathbf{t}\) is an orbit under the autoequivalence \(\tau\), and the minimal \(n\in\N\) such that \(\tau^n\) acts as the identity on \(\mathbf{t}\) is called \textit{the rank of} \(\mathbf{t}\). Tubes of rank $1$ are \textit{homogenous}. All tubes have finite rank, and at most $3$ tubes can be non-homogenous.  
By~\cite[Proposition~2.4.2]{Dichev09}, every tube of rank \(n\in\N\) has \(\binom{2n}{n}\) wide subcategories.

\begin{definition}
Let \(M\in\modf\dashmodule KQ\). Then $M$ is called \textit{rigid} if \(\Ext^1_{KQ}(M,M)=0\). A rigid module $M$ is called \textit{exceptional} if \(\End_{KQ}(M)\simeq K\).

Let \(n\in\N\). An $n$-tuple \((X_1,\ldots,X_n)\) of finitely generated $KQ$-modules is called an \textit{exceptional sequence of length n} if it consisits of exceptional objects, and for all \(1\leq i<j\leq n\) we have \(\Hom_{KQ}(X_j,X_i)\simeq0\simeq\Ext^1_{KQ}(X_j,X_i)\).
If $n$ equals the~number of verticies of $Q$, then the exceptional sequence is called \textit{complete}.
\end{definition}

Any complete exceptional sequence \((X_1,\ldots,X_N)\) generates the module category \(\modf\dashmodule KQ\) in the~sense that \(\wide(X_1,\ldots,X_N)=\modf\dashmodule KQ\).
Furthermore, the category \(\wide(X_1,\ldots,X_n)\) for an exceptional sequence of length \(n\in\N\) is equivalent to \(\modf\dashmodule KQ'\) for some finite acyclic quiver $Q'$ with $n$ verticies, and the embedding \(\modf\dashmodule KQ'\xhookrightarrow{}\modf\dashmodule KQ\) is exact and induces isomorphisms on Hom and Ext. 
Additionally, the category \(\wide(X_1,\ldots,X_n)^{\perp}\subseteq\modf\dashmodule KQ\) possesses an exceptional sequence of length \(N-n\), and \(\wide(X_1,\ldots,X_n)={}^{\perp}\big(\wide(X_1,\ldots,X_n)^{\perp}\big)\) (see \cite{Boevey93} for details).

A module \(S\in\mathcal{R}\) without proper non-zero regular submodules is \textit{simple regular}. Every tube of rank \(n\in\N\) contains precisely $n$ different simple regulars up to isomorphism.

\begin{lemma}
\label{L:SincereModule}
Let \(S\in\ind\dashmodule KQ\) be simple regular from a homogenous tube. Then \(\Sigma\mathcal{P}\cap S^{\perp}=\emptyset=\mathcal{Q}\cap S^{\perp}\) in \(\mathcal{S}\).
\end{lemma}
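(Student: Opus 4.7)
\textbf{The plan} is to reduce both non-vanishing statements to the single observation that $\Hom_{KQ}(P_j,S)\neq 0$ for every indecomposable projective $P_j$. This in turn rests on two classical facts about simple regulars in homogeneous tubes of tame hereditary algebras that I would state at the outset: (a) since the tube has rank $1$, we have $\tau S\cong S$ in $\mathcal{S}$; and (b) the dimension vector of $S$ equals the null root $\delta$ of the Euclidean diagram of $Q$, which is strictly positive at every vertex, so $\Hom_{KQ}(P_j,S)\cong e_j S\neq 0$ for each vertex $j$.

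\textbf{For $\mathcal{Q}\cap S^{\perp}=\emptyset$}, I would use the alternative description of $\mathcal{Q}$ recorded just above the lemma: any indecomposable preinjective has the form $I\cong\tau^{m}\Sigma P_j$ with $m\geq 1$ and $P_j$ indecomposable projective. Treating $\tau$ as an autoequivalence of $\mathcal{S}$ and invoking $\tau^m S\cong S$ (iterated from~(a)), I get
\[
\Hom_{\mathcal{S}}(S,I)\;\cong\;\Hom_{\mathcal{S}}(\tau^{-m}S,\Sigma P_j)\;=\;\Hom_{\mathcal{S}}(S,\Sigma P_j)\;=\;\Ext^{1}_{KQ}(S,P_j).
\]
Serre duality in $\mathcal{S}$ combined with $\tau S\cong S$ then gives $\Ext^{1}_{KQ}(S,P_j)\cong D\Hom_{KQ}(P_j,S)$, which is non-zero by~(b).

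\textbf{For $\Sigma\mathcal{P}\cap S^{\perp}=\emptyset$}, any indecomposable preprojective has the form $P\cong\tau^{-n}P_j$. Following the analogous string of identities, I would obtain
\[
\Hom_{\mathcal{S}}(S,\Sigma P)\;=\;\Ext^{1}_{KQ}(S,\tau^{-n}P_j)\;\cong\;D\Hom_{\mathcal{S}}(\tau^{-n}P_j,S)\;\cong\;D\Hom_{KQ}(P_j,S)\;\neq\;0,
\]
where the first isomorphism is Serre duality together with $\tau S\cong S$, and the second is the autoequivalence property of $\tau$ combined with $\tau^{n}S\cong S$.

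\textbf{The main obstacle} will be treating $\tau$ as the derived autoequivalence of $\mathcal{S}$ rather than the module-theoretic AR translation, which annihilates projective inputs. Working in $\mathcal{S}$ with the shifted Serre functor $\tau\cong\mathbb{S}\circ\Sigma^{-1}$ makes every $\tau^{\pm n}P_j$ and the shift-transfer step functorial; after this setup both parts collapse to the sincerity statement $\Hom_{KQ}(P_j,S)\neq 0$ in fact~(b).
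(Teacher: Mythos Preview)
Your proof is correct and follows essentially the same approach as the paper: both reduce the non-vanishing to the sincerity of $S$ (i.e.\ $\Hom_{KQ}(P_j,S)\neq 0$ for every indecomposable projective) via Serre duality and the $\tau$-invariance $\tau S\cong S$, with the only difference being that you isolate $\tau S\cong S$ as a standalone fact before computing, whereas the paper weaves it into a single chain of isomorphisms.
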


\begin{proof}
The notation \((\blank)^*=\Hom_K(\blank,K)\) shall denote the standard $K$-dual.
Since \(S\) simple regular lies in a homogenous tube, then its dimension vector is sincere (see e.g.\ \cite[Corollary~XI.3.9]{SimsonSkowronski07}), meaning \(\Hom(P,S)\neq0\) for every projective \(P\in\ind\dashmodule KQ\).
Any preinjective is of the form \(\tau^n\Sigma P\) for some \(n\in\N\) and \(P\) indecomposable projective. 
Using that \(\tau\Sigma:\mathcal{S}\rightarrow\mathcal{S}\) is a Serre functor on \(\mathcal{S}\) (see \cite[Theorem~II.1.3]{ReitenVanderbergh02}), we get
\begin{align*}
0&\neq\Hom_{\mathcal{S}}(P,S)\simeq\Hom_{\mathcal{S}}(P,\tau^{-n-1}S)\simeq \Hom_{\mathcal{S}}(\tau^{-n-1}S,\tau\Sigma P)^*\\
&\simeq \Hom_{\mathcal{S}}(S,\tau^n\Sigma P)^*
.\end{align*}
The proof that \(\Hom_{\mathcal{S}}(S,\Sigma P')\neq0\) for all \(P'\in\mathcal{P}\) is dual.
\end{proof}

We intend to use Brüning's correspondence to understand thick subcategories of~\(\mathcal{S}\). We will denote \(\mathfrak{R}:=\thick(\mathcal{R})\simeq\coprod_{i\in\ProjectiveLine{K}}\coprod_{j\in\Z}\Sigma^j\tube_i\subseteq\mathcal{S}\) the thick subcategory generated by all the regular tubes.

\begin{theorem}[{\cite[Theorem~3.2.15]{Dichev09},\cite[Proposition~6.14]{Kohler11}}]
\label{T:RegularOrExceptional}
Let \(\mathcal{C}\) be a thick subcategory of~\(\mathcal{S}\). Then at least one of the following conditions holds:
\begin{enumerate}[label=(\roman*), leftmargin=10mm]
\item \(\mathcal{C}=\thick(X_1,\ldots,X_n)\) for an exceptional sequence \((X_1,\ldots,X_n)\) in \(\modf\dashmodule KQ\),
\item \(\mathcal{C}\subseteq\mathfrak{R}\). 
\end{enumerate}
\end{theorem}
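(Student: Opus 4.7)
The plan is to transfer the classification problem from $\mathcal{S}$ to $\modf\dashmodule KQ$ via Brüning's correspondence, and then analyse wide subcategories using the preprojective/regular/preinjective trichotomy together with Schofield--Crawley-Boevey perpendicular calculus for exceptional modules.

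First I would set $\mathcal{W} := H^0(\mathcal{C})$. Brüning's correspondence (applicable since $KQ$ is hereditary and noetherian) tells us that $\mathcal{W}$ is a wide subcategory of $\modf\dashmodule KQ$ with $\mathcal{C} = \thick(\mathcal{W})$. It therefore suffices to show that either $\mathcal{W} \subseteq \coprodf(\mathcal{R})$, in which case $\mathcal{C} \subseteq \thick(\mathcal{R}) = \mathfrak{R}$ (giving (ii)), or $\mathcal{W} = \wide(X_1,\ldots,X_n)$ for some exceptional sequence $(X_1,\ldots,X_n)$ (giving (i), because then $\mathcal{C} = \thick(\mathcal{W}) = \thick(X_1,\ldots,X_n)$).

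Next I would split on whether $\mathcal{W}$ meets $\mathcal{P} \cup \mathcal{Q}$. If it does not, then, since $\mathcal{W}$ is closed under direct summands, every indecomposable summand of every object of $\mathcal{W}$ is regular, so $\mathcal{W} \subseteq \coprodf(\mathcal{R})$ and case (ii) is immediate. Otherwise pick an indecomposable $X_1 \in \mathcal{W} \cap (\mathcal{P}\cup\mathcal{Q})$; over an algebraically closed field such an indecomposable is automatically exceptional, since indecomposable preprojectives and preinjectives over a finite dimensional hereditary $K$-algebra are rigid with endomorphism ring $K$ (a standard Auslander--Reiten computation using that $\tau$ has no periodic orbits on $\mathcal{P} \cup \mathcal{Q}$). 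Then I would form the right perpendicular category $X_1^{\perp}$ in $\modf\dashmodule KQ$ (with respect to both $\Hom$ and $\Ext^1$); by the perpendicular calculus of Schofield and Crawley-Boevey (\cite{Boevey93}) this is a wide subcategory equivalent to $\modf\dashmodule KQ'$ for a finite acyclic quiver $Q'$ with $N-1$ vertices, and $\mathcal{W} \cap X_1^{\perp}$ is a wide subcategory of $\modf\dashmodule KQ'$.

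The endgame is induction on $N$: the base case $N=1$ is trivial since $\modf\dashmodule K$ has only two wide subcategories, both exceptionally generated, and the inductive step applies the same dichotomy to $\mathcal{W} \cap X_1^{\perp}$ inside the smaller $\modf\dashmodule KQ'$. Prepending $X_1$ to the exceptional sequence obtained from $\mathcal{W} \cap X_1^{\perp}$ produces the required exceptional sequence for $\mathcal{W}$, the ordering conditions being automatic because $\mathcal{W} \cap X_1^{\perp} \subseteq X_1^{\perp}$. The main obstacle is the interaction between the inductive hypothesis and the regular tubes: a priori $\mathcal{W}$ could contain a whole non-homogeneous tube together with a preprojective, which would prevent $\mathcal{W}$ from being generated by a (necessarily finite) exceptional sequence. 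The key technical input that rules this out is the orthogonality between distinct tubes combined with the fact from \cite{Dichev09} that a tube of rank $n$ has only finitely many, namely $\binom{2n}{n}$, wide subcategories, of which all but the entire tube are generated by exceptional sequences; combined with Lemma~\ref{L:SincereModule}, which forces non-trivial Hom or Ext between any preprojective/preinjective and any simple regular from a homogeneous tube, this shows that whenever $\mathcal{W}$ contains a non-regular indecomposable, its intersection with each tube must be a proper, hence exceptionally generated, wide subcategory of that tube, so the induction closes.
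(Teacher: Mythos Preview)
The paper does not prove this theorem; it is quoted with attribution to \cite{Dichev09} and \cite{Kohler11} and used as a black box. So there is no ``paper's own proof'' to compare against, and the question is whether your sketch stands on its own.

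Your overall strategy (Br\"uning plus perpendicular calculus) is reasonable, but the inductive step has a genuine gap. You assert that prepending $X_1$ to an exceptional sequence for $\mathcal{W}\cap X_1^{\perp}$ yields an exceptional sequence generating $\mathcal{W}$; equivalently, that $\mathcal{W}=\wide\bigl(X_1,\,\mathcal{W}\cap X_1^{\perp}\bigr)$. This is precisely the nontrivial point and it is not justified. For an arbitrary exceptional $X_1\in\mathcal{W}$ there is no reason the $X_1^{\perp}$-reflection of a module $M\in\mathcal{W}$ should again lie in $\mathcal{W}$; one typically needs $X_1$ to be relatively projective (or injective) in $\mathcal{W}$, or to choose $X_1$ more carefully (e.g.\ a minimal preprojective in $\mathcal{W}$), and then argue that the approximation triangles stay inside $\mathcal{W}$. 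Without this, the induction does not close. A second issue is the inductive framework itself: you induct on the number $N$ of vertices, but $X_1^{\perp}\simeq\modf\dashmodule KQ'$ need not be tame, so you are tacitly invoking the dichotomy for all hereditary $KQ'$, including wild ones, where the statement as phrased is not available.

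Finally, the last paragraph does not do what you claim. Lemma~\ref{L:SincereModule} gives nonvanishing of $\Hom$ or $\Ext^1$ between a simple regular in a homogeneous tube and any preprojective/preinjective, but nonvanishing of these groups in no way prevents both objects from lying in the same wide subcategory; indeed $\modf\dashmodule KQ$ itself contains every tube in full together with all preprojectives and is still generated by a (complete) exceptional sequence. So the argument that ``$\mathcal{W}$ meets each tube properly whenever it contains a non-regular indecomposable'' is not established, and in fact the correct mechanism (used in \cite{Dichev09,Kohler11}) is different: one shows that a wide subcategory containing a non-regular indecomposable is functorially finite, hence equivalent to a module category over a finite-dimensional hereditary algebra and therefore generated by its relative simples, which form an exceptional sequence.
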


\begin{lemma}
\label{L:InfiniteChainDiffersAtHomogenousTubes}
There exists a number \(M\in\N\) (depending on $Q$) such that for any strictly decreasing chain \(\mathcal{C}_1\supsetneq\mathcal{C}_2\supsetneq\cdots\supsetneq\mathcal{C}_M\) of thick subcategories of~\(\mathcal{S}\) we can find indices \(1\leq i<j\leq M\) and a homogenous regular tube \(\tube\subseteq\modf\dashmodule KQ\) with \(\mathcal{C}_i,\mathcal{C}_j\subseteq\mathfrak{R}\), \(\tube\subseteq\mathcal{C}_i\) and \(\tube\cap\,\mathcal{C}_j=0\).
\end{lemma}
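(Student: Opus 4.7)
The plan is to split the chain at the first index $k$ with $\mathcal{C}_k \subseteq \mathfrak{R}$ (setting $k := M+1$ if no such index exists). Because strict inclusion propagates downward, the \emph{prefix} $\mathcal{C}_1,\ldots,\mathcal{C}_{k-1}$ consists of the entries not contained in $\mathfrak{R}$ and the \emph{suffix} $\mathcal{C}_k,\ldots,\mathcal{C}_M$ of those that are. By Theorem~\ref{T:RegularOrExceptional} each prefix entry is generated by an exceptional sequence, so the first task is to bound the prefix length purely in terms of $N$; the second will be a pigeonhole argument on the suffix via the tubular decomposition of $\mathfrak{R}$.

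For the prefix, the key claim is that if $\mathcal{C}_{i+1} \subsetneq \mathcal{C}_i$ are consecutive prefix entries with exceptional-sequence lengths $n_{i+1}$ and $n_i$, then $n_{i+1} < n_i$. Each generator $Y_\ell$ of $\mathcal{C}_{i+1}$ is a module lying in $\mathcal{C}_i \cap \modf\dashmodule KQ = \wide(\mathcal{C}_i)$, and $\wide(\mathcal{C}_i)$ is equivalent to $\modf\dashmodule KQ'$ for an acyclic quiver $Q'$ on $n_i$ vertices, with the embedding preserving $\Hom$ and $\Ext^1$. Hence $(Y_1,\ldots,Y_{n_{i+1}})$ is an exceptional sequence inside $\modf\dashmodule KQ'$, giving $n_{i+1} \leq n_i$; equality would make the sequence complete in $\modf\dashmodule KQ'$, forcing $\wide(\mathcal{C}_{i+1}) = \wide(\mathcal{C}_i)$ and hence $\mathcal{C}_{i+1} = \mathcal{C}_i$ by Brüning's correspondence -- a contradiction. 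Since prefix entries are non-zero (as $0 \subseteq \mathfrak{R}$), the $n_i$ lie in $\{1,\ldots,N\}$ and the prefix has length at most $N$.

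For the suffix, mutual orthogonality of the tubes ensures that every $\mathcal{C}_i \subseteq \mathfrak{R}$ is determined by its intersections $\mathcal{C}_i \cap \thick(\tube_s)$ for $s \in \ProjectiveLine{K}$. By \cite[Proposition~2.4.2]{Dichev09} together with Brüning's theorem, each such intersection is one of $\binom{2r(\tube_s)}{r(\tube_s)}$ possible pieces -- just $0$ or $\thick(\tube_s)$ when $\tube_s$ is homogeneous. Setting $L := \prod_{\tube \text{ non-homogeneous}}\binom{2r(\tube)}{r(\tube)}$ (finite, since at most three tubes are non-homogeneous), the non-homogeneous configuration along the suffix is a weakly decreasing function into a set of size $L$; if the suffix has at least $L+1$ entries, then for some consecutive pair $\mathcal{C}_i \supsetneq \mathcal{C}_{i+1}$ this configuration is constant, which forces the homogeneous part to strictly decrease and produces a homogeneous tube $\tube$ with $\tube \subseteq \mathcal{C}_i$ and $\tube \cap \mathcal{C}_{i+1} = 0$. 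Taking $M := N + L + 1$ therefore suffices. The main delicate step will be the strict drop of exceptional-sequence length in the prefix, where Crawley-Boevey's perpendicular calculus and Brüning's correspondence must be combined carefully; everything else is routine pigeonhole/lattice bookkeeping.
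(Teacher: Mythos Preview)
Your proposal is correct and follows essentially the same approach as the paper: bound the number of terms not lying in $\mathfrak{R}$ by the strict drop in exceptional-sequence length (the paper states this more tersely, while you spell out the perpendicular-calculus justification), and then apply a pigeonhole argument on the non-homogeneous tube configurations for the remaining terms in $\mathfrak{R}$. The only cosmetic differences are your explicit prefix/suffix split versus the paper's direct count, and the resulting constants ($M = N + L + 1$ versus $M = N + M' + 2$, with $L = M'$).
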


\begin{proof}
Let \(\mathfrak{U}\subseteq\mathfrak{R}\) be the thick subcategory generated by all the non-homogenous tubes, and \(\mathfrak{V}\subseteq\mathfrak{R}\) be the thick subcategory generated by all the homogenous tubes.
Every thick subcategory of \(\mathfrak{R}\) is of the form \(\mathcal{U}\oplus\mathcal{V}\) for some thick subcategories \(\mathcal{U}\subseteq\mathfrak{U}\) and \(\mathcal{V}\subseteq\mathfrak{V}\).
Since there are only finitely many non-homogenous tubes, and each tube has only finite number of wide subcategories, there are only finitely many thick subcategories of \(\mathfrak{U}\); let us denote this number by $M'$.

Set \(M:=N+M'+2\).
Let \(\mathcal{C}_1\supsetneq\mathcal{C}_2\supsetneq\cdots\supsetneq\mathcal{C}_M\) be a strictly decreasing chain of thick subcategories of \(\mathcal{S}\). 
If \(\mathcal{C}_i\) and \(\mathcal{C}_j\) for some \(1\leq i<j\leq M\) are generated by exceptional sequences, then the exceptional sequence generating \(\mathcal{C}_j\) must be strictly shorter than the exceptional sequence generating \(\mathcal{C}_i\) because the~equality would yield \(\mathcal{C}_i=\mathcal{C}_j\). 
Therefore, at most \(N+1<M\) thick subcategories in the~chain can be generated by an exceptional sequence. Theorem~\ref{T:RegularOrExceptional} then tells us that the~remaining thick subcategories must be then generated by regular elements.

We restrict ourselves to a sub-chain \(\mathcal{D}_1\supsetneq\mathcal{D}_2\supsetneq\cdots\supsetneq\mathcal{D}_{M'+1}\) of thick subcategories of \(\mathcal{S}\) with \(\mathcal{D}_i\subseteq\mathfrak{R}\) for all \(1\leq i\leq M'+1\).
By the pigeonhole principle, there exist \(1\leq i< j\leq M'+1\) and thick subcategories \(\mathcal{U}\subseteq\mathfrak{U}\) and \(\mathcal{V},\mathcal{W}\subseteq\mathfrak{V}\) such that \(\mathcal{D}_i=\mathcal{U}\oplus\mathcal{V}\) and \(\mathcal{D}_j=\mathcal{U\oplus\mathcal{W}}\). This forces \(\mathcal{W}\subsetneq\mathcal{V}\), and these two subcategories must differ by at least one homogenous tube.
\end{proof}

\section{Constant metrics, smashing subcategories, and ring epimorphisms}
\label{sec:smash}

We fix a ring $A$ across this whole section, and denote \(\mathcal{S}:=\perfect(A)\). We shall make use of the fact that the inclusion \(\mathcal{S}\xhookrightarrow{}\derived(A)\) is a good extension, so all the~categories of pre-completions, completions, and compactly supported elements will be calculated inside \(\derived(A)\).

In this short section, we intend to relate the completion \(\mathfrak{S}'(\mathcal{S})\subseteq\derived(A)\) with respect to a constant metric to a subcategory \(\derived(B)\subseteq\derived(A)\) determined by a~homological ring epimorphism \(A\rightarrow B\) (see Proposition~\ref{P:HomologicalEpimorphismDeterminingCompletion} and Corollary~\ref{C:CompletionUniversalLocalisation}).

Recall that if \(\mathcal{T}\) is a category with coproducts, then a triangulated subcategory of \(\mathcal{T}\) is \textit{localising} if it is closed under (infinite) coproducts. For \(\mathcal{C}\subseteq\mathcal{T}\) a class, the~smallest localising subcategory of \(\mathcal{T}\) containing \(\mathcal{C}\) is called \textit{a localising subcategory generated by} \(\mathcal{C}\) and denoted \(\Loc(\mathcal{C})\). A localising subcategory \(\mathcal{L}\subseteq\mathcal{T}\) is called \textit{smashing} if the inclusion \(\mathcal{L}\xhookrightarrow{}\mathcal{T}\) admits a right adjoint and \(\mathcal{L}^{\perp}\) is closed under coproducts.
Smashing subcategories give rise to recollements of triangulated categories (defined in \cite[Subsection~1.4]{BeilinsonBernsteinDeligne82}).

\begin{lemma}
\label{L:SmashingRecollement}
Let \(\mathcal{T}\) be a compactly generated triangulated category.
Consider the~good extension \(\mathcal{T}^c\xhookrightarrow{}\mathcal{T}\).
Let \(\mathcal{C}\subseteq\mathcal{T}^c\) be a triangulated subcategory. Then there exists a recollement diagram 
\begin{equation}\nonumber   
\xymatrix@C=0.5cm{\mathfrak{C}'_{\mathcal{C}}(\mathcal{T}^c) \ar[rrr]^{i_*} &&& \mathcal{T} \ar[rrr]^{j^*}  \ar @/_1.5pc/[lll]_{i^*}  \ar @/^1.5pc/[lll]_{i^!} &&& \Loc(\mathcal{C})\ar @/_1.5pc/[lll]_{j_!} \ar @/^1.5pc/[lll]_{j_*} } 
\end{equation}
with \(i_*:\mathfrak{C}'_{\mathcal{C}}(\mathcal{T}^c)\xhookrightarrow{}\mathcal{T}\) and \(j_!:\Loc(\mathcal{C})\xhookrightarrow{}\mathcal{T}\) being the respective inclusions.
\end{lemma}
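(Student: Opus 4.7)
The plan is to identify $\mathfrak{C}'_{\mathcal{C}}(\mathcal{T}^c)$ with the Hom-perpendicular $\Loc(\mathcal{C})^{\perp}\subseteq\mathcal{T}$, and then to invoke the standard recollement attached to the smashing subcategory $\Loc(\mathcal{C})$.

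First I would appeal to the description $\mathfrak{C}'_{\mathcal{M}}(\mathcal{S})=\bigcup_{n\in\N}\nu(B_n)^{\perp}$ recorded in the discussion following the definition of good extensions. Applied to the constant metric whose every ball equals \(\mathcal{C}\), with \(\nu\) the inclusion $\mathcal{T}^c\hookrightarrow\mathcal{T}$, this yields the equality $\mathfrak{C}'_{\mathcal{C}}(\mathcal{T}^c)=\mathcal{C}^{\perp}$ inside $\mathcal{T}$.

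Next I would verify that $\mathcal{C}^{\perp}=\Loc(\mathcal{C})^{\perp}$. The inclusion $\Loc(\mathcal{C})^{\perp}\subseteq\mathcal{C}^{\perp}$ is immediate. For the converse, fix $X\in\mathcal{C}^{\perp}$; since $\mathcal{C}$ is a triangulated subcategory, $\Hom_{\mathcal{T}}(C,\Sigma^iX)=0$ for every $C\in\mathcal{C}$ and every $i\in\Z$. The full subcategory $\mathcal{L}_X:=\{L\in\mathcal{T}:\Hom_{\mathcal{T}}(L,\Sigma^iX)=0\text{ for all }i\in\Z\}$ is closed under shifts, extensions, and arbitrary coproducts because $\Hom_{\mathcal{T}}(-,\Sigma^iX)$ sends coproducts to products. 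Hence $\mathcal{L}_X$ is a localising subcategory containing \(\mathcal{C}\), forcing $\Loc(\mathcal{C})\subseteq\mathcal{L}_X$, i.e.\ $X\in\Loc(\mathcal{C})^{\perp}$. I would then observe that since $\mathcal{T}$ is compactly generated and $\mathcal{C}\subseteq\mathcal{T}^c$ is essentially small, Neeman's Brown representability provides a right adjoint to the inclusion $j_!:\Loc(\mathcal{C})\hookrightarrow\mathcal{T}$, and $\Loc(\mathcal{C})^{\perp}$ is stable under coproducts because a set of compact generators of $\Loc(\mathcal{C})$ remains compact in $\mathcal{T}$; i.e.\ $\Loc(\mathcal{C})$ is a smashing subcategory of $\mathcal{T}$.

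With these ingredients in place, the Bousfield localisation attached to $\Loc(\mathcal{C})$ produces, for each $X\in\mathcal{T}$, a functorial triangle $j_!j^*X\to X\to i_*i^*X\to\Sigma j_!j^*X$ with $j_!j^*X\in\Loc(\mathcal{C})$ and $i_*i^*X\in\Loc(\mathcal{C})^{\perp}=\mathfrak{C}'_{\mathcal{C}}(\mathcal{T}^c)$; the further adjoints $i^!$ and $j_*$ are obtained by a second appeal to Brown representability inside the compactly generated category $\mathcal{T}$. Assembling these functors gives the recollement displayed in the statement. The only step that demands any real argument is the identification $\mathfrak{C}'_{\mathcal{C}}(\mathcal{T}^c)=\Loc(\mathcal{C})^{\perp}$ carried out above; everything else is a formal consequence of the smashing property in a compactly generated setting, so I do not expect a serious obstacle.
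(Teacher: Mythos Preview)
Your proposal is correct and follows essentially the same route as the paper's proof: identify $\mathfrak{C}'_{\mathcal{C}}(\mathcal{T}^c)=\mathcal{C}^{\perp}=\Loc(\mathcal{C})^{\perp}$, observe that $\Loc(\mathcal{C})$ is smashing, and deduce the recollement. The paper simply cites \cite[Corollary~2.8, Proposition~2.9]{HugelMarksVitoria19} for the smashing property and the resulting recollement, and \cite[Lemma~3.8]{Neeman25A} for the identification $\Loc(\mathcal{C})^{\perp}=\mathcal{C}^{\perp}$, whereas you unpack these steps by hand; the content is the same.
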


\begin{proof}
\(\Loc(\mathcal{C})\) is a smashing subcategory of \(\mathcal{T}\) by \cite[Corollary~2.8]{HugelMarksVitoria19}. The existence of the recollement diagram above then follows from \cite[Proposition~2.9]{HugelMarksVitoria19} (see also \cite[Chapter~4]{Nicolas07}) where we identify \(\Loc(\mathcal{C})^{\perp}=\mathcal{C}^\perp=\mathfrak{C}_{\mathcal{C}}'(\mathcal{T}^c)\) using \cite[Lemma~3.8]{Neeman25A}. 
\end{proof}

Although every localising subcategory of 
\(\derived(A)\) (more generally, of a~compactly generated triangulated category) generated by perfect complexes (compact objects) is smashing, the converse is the assertion of the so-called \textit{telescope conjecture}. While there exist counterexamples to the conjecture (e.g.\ \cite[Example~7.8]{KrauseStovicek10}) in its full generality, it has been proven for some classes of rings, such as hereditary rings by Krause and Šťovíček \cite[Theorem~A]{KrauseStovicek10} and commutative noetherian rings by Neeman \cite{Neeman92A}.

\begin{theorem}[{\cite[Theorem~3.3, Corollary~3.4]{Neeman92A}}]
\label{T:TelescopeConjectureCommutativeNoetherian}
Let $A$ be a commutative noetherian ring.
Then there are bijections between:
\begin{itemize}
    \item thick subcategories of \(\perfect(A)\),
    \item smashing subcategories of \(\derived(A)\), and
    \item specialisation closed subsets of \(\Spec(A)\).
\end{itemize}
More specifically, to a thick subcategory \(\mathcal{C}\) of \(\perfect(A)\) we assign the localising subcategory \(\Loc(\mathcal{C})\) of \(\derived(A)\) and the specialisation closed subset \(\Supp(\mathcal{C})\), respectively.
\end{theorem}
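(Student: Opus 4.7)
The plan is to split the claim into two parts. First, I would establish the Hopkins--Neeman classification of thick subcategories of \(\perfect(A)\) by specialisation closed subsets of \(\Spec(A)\). Second, I would invoke the telescope conjecture for commutative noetherian rings to pass from thick subcategories of \(\perfect(A)\) to smashing subcategories of \(\derived(A)\) via \(\mathcal{C} \mapsto \Loc(\mathcal{C})\).

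For the first bijection, I would define \(\Phi(\mathcal{C}) := \Supp(\mathcal{C})\) and \(\Psi(V) := \{X \in \perfect(A) : \Supp(X) \subseteq V\}\). Both maps are well-defined: \(\Psi(V)\) is thick since support is stable under cones, shifts, and direct summands, while \(\Supp(\mathcal{C})\) is specialisation closed because the support of each perfect complex is Zariski closed. To verify \(\Phi \circ \Psi = \mathrm{id}\), write \(V = \bigcup_{\alpha} V(I_\alpha)\); for each finitely generated ideal \(I_\alpha = (a_1, \ldots, a_n)\), the Koszul complex \(K(I_\alpha) = K(a_1) \otimes_A \cdots \otimes_A K(a_n)\) is a perfect complex with \(\Supp(K(I_\alpha)) = V(I_\alpha)\), so \(\Supp(\Psi(V)) \supseteq V\); the reverse inclusion is tautological.

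The main obstacle is the reverse identity \(\Psi \circ \Phi = \mathrm{id}\): showing that any perfect complex \(X\) with \(\Supp(X) \subseteq \Supp(\mathcal{C})\) already lies in \(\mathcal{C}\). This reduces to the Hopkins--Neeman key lemma: if \(X, Y \in \perfect(A)\) satisfy \(\Supp(X) \subseteq \Supp(Y)\), then \(X \in \thick(Y)\). I would argue by noetherian induction on \(\Supp(X)\). At the base case, one localises at a prime \(\mathfrak{p}\) minimal in \(\Supp(X)\) and reduces to the local noetherian situation, where every perfect complex supported at the maximal ideal lies in the thick subcategory generated by the Koszul complex on a system of parameters; this is proved by a tensor-nilpotence argument exploiting finite projective dimension. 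The inductive step bootstraps the local result globally via a Mayer--Vietoris style decomposition along the open complement of \(V(\mathfrak{p})\).

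For the second bijection, I would show that \(\mathcal{C} \mapsto \Loc(\mathcal{C})\) is a bijection between thick subcategories of \(\perfect(A)\) and smashing subcategories of \(\derived(A)\). Well-definedness follows from \cite[Corollary~2.8]{HugelMarksVitoria19}. Injectivity uses \(\Loc(\mathcal{C}) \cap \derived(A)^c = \mathcal{C}\), a standard consequence of compact generation together with the classification of the first part applied inside \(\Loc(\mathcal{C})\). Surjectivity is the telescope conjecture: for \(\mathcal{L}\) smashing, one shows \(\mathcal{L} = \Loc(\mathcal{L} \cap \perfect(A))\) by proving that both subcategories are Bousfield-orthogonal to the same family of residue field objects \(\{k(\mathfrak{p})\}_{\mathfrak{p} \in \Spec(A)}\). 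The technical heart, under the noetherian hypothesis, is to show that the set \(\Phi := \{\mathfrak{p} : k(\mathfrak{p}) \notin \mathcal{L}^{\perp}\}\) is specialisation closed; once this is known, the first bijection identifies \(\mathcal{L}\) with the smashing subcategory generated by \(\Psi(\Phi) \cap \perfect(A)\), completing the proof.
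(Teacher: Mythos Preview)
The paper does not prove this theorem at all: it is stated with a citation to \cite[Theorem~3.3, Corollary~3.4]{Neeman92} and used as a black box. There is therefore no proof in the paper to compare your proposal against.

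Your outline is a reasonable sketch of the classical argument, combining the Hopkins--Neeman thick subcategory classification with Neeman's proof of the telescope conjecture for commutative noetherian rings. A few of your steps are underspecified (the noetherian induction for the key lemma, and the argument that the set $\Phi$ is specialisation closed and determines $\mathcal{L}$), but the overall strategy is the standard one and is correct in shape. Since the paper simply imports the result from the literature, any self-contained proof you write goes beyond what the paper does.
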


Sometimes, the validity of the telescope conjecture also provides us for each smashing subcategory of \(\derived(A)\) with a corresponding \textit{homological epimorphism}; that is a ring epimorphism \(A\rightarrow B\) such that \(B\otimes_AB\simeq B\) and \(\Tor^A_n(B,B)=0\) for all \(n\in\N\), or equivalently (via \cite[Theorem~4.4]{GeigleLenzing91}), such that the induced functor \(\derived(B)\rightarrow\derived(A)\) is fully faithful.

\begin{proposition}
\label{P:HomologicalEpimorphismDeterminingCompletion}
Let \(\mathcal{C}\subseteq\mathcal{S}\) be a thick subcategory.
Assume further that at least one of these conditions holds:
\begin{enumerate}[label=(\roman*)]
\item $A$ is hereditary, or
\item $A$ is commutative noetherian, and \(\Supp(\mathcal{C})\) has a coherent complement in~\(\Spec(A)\).
\end{enumerate}
Then there exists a~homological epimorphism \(A\rightarrow B\) satisfying \(\mathfrak{S}_{\mathcal{C}}'(\mathcal{S})\subseteq\perfect(B)\) where we identify \(\derived(B)\) with \(\im(i_*)\) from the recollement diagram
\begin{equation}\nonumber   
\xymatrix@C=0.5cm{\derived(B) \ar[rrr]^{i_*} &&& \derived(A) \ar[rrr]^{j^*}  \ar @/_1.5pc/[lll]_{\blank\otimes^\mathbf{L}_AB}  \ar @/^1.5pc/[lll]_{i^!} &&& \Loc(\mathcal{C})\ar @/_1.5pc/[lll]_{j_!} \ar @/^1.5pc/[lll]_{j_*} } 
\end{equation}
with \(j_!:\Loc(\mathcal{C})=\Ker\left(\blank\otimes^\mathbf{L}_AB\right)\xhookrightarrow{}\derived(A)\) and \(i^!=\mathbf{R}\Hom_A(B,\blank)\).

Furthermore, the equality \(\mathfrak{S}_{\mathcal{C}}'(\mathcal{S})=\perfect(B)\) holds if and only if \(B\in\mathfrak{S}_{\mathcal{C}}'(\mathcal{S})\) and \(\mathfrak{S}_{\mathcal{C}}'(\mathcal{S})\) is idempotent complete.
\end{proposition}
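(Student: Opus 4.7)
The plan is first to produce a homological ring epimorphism $A \to B$ from $\mathcal{C}$ in each of the two settings, then to identify $\derived(B)$ with $\mathfrak{C}'_{\mathcal{C}}(\mathcal{S})=\mathcal{C}^{\perp}$ inside $\derived(A)$ via the uniqueness of smashing recollements, and finally to observe that the Cauchy condition forces every homotopy colimit representing an element of $\mathfrak{S}'_{\mathcal{C}}(\mathcal{S})$ to collapse to a perfect $B$-complex.

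In case (i), I would invoke \cite[Theorem~A]{KrauseStovicek10}: over a right hereditary ring every smashing subcategory of $\derived(A)$ is generated by a set of morphisms between perfect complexes, and equals the kernel of $\blank \otimes^{\mathbf{L}}_A B$ for a universal localisation $A \to B$, which is automatically a homological epimorphism. In case (ii), Theorem~\ref{T:TelescopeConjectureCommutativeNoetherian} realises $\Loc(\mathcal{C})$ as the localising subcategory of complexes supported on $\Supp(\mathcal{C})$, and coherence of the complement is precisely what lets one represent the Bousfield localisation at $\Loc(\mathcal{C})^{\perp}$ as derived base change along a homological epimorphism $A \to B$ (via the Gabriel topology of finite type attached to the specialisation closed subset).

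Given such $B$, the recollement of $\derived(A)$ produced by the homological epimorphism $A \to B$ has $\Loc(\mathcal{C}) = \Ker(\blank \otimes^{\mathbf{L}}_A B)$ on the right, so by the uniqueness of recollements with a prescribed smashing subcategory it coincides with the one from Lemma~\ref{L:SmashingRecollement}; comparing left terms gives $\mathfrak{C}'_{\mathcal{C}}(\mathcal{S}) = \derived(B)$ as subcategories of $\derived(A)$. Now let $X \in \mathfrak{S}'_{\mathcal{C}}(\mathcal{S})$ with $X \simeq \hoco \mathbf{E}$ for a Cauchy sequence $\mathbf{E}=(E_n,e_n)_{n\in\N}$ in $\perfect(A)$. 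Since $X \in \derived(B) = \im(i_*)$ and $i^{*}=\blank\otimes^{\mathbf{L}}_A B$ preserves homotopy colimits while restricting to the identity on $\derived(B)$, we obtain $X \simeq \hoco(E_n \otimes^{\mathbf{L}}_A B)$. The Cauchy condition yields $\cone(e_{n,m}) \in \mathcal{C} \subseteq \Ker(\blank \otimes^{\mathbf{L}}_A B)$ for $n \le m$ large, hence $e_{n,m} \otimes^{\mathbf{L}}_A B$ is eventually an isomorphism and the homotopy colimit collapses to $X \simeq E_N \otimes^{\mathbf{L}}_A B \in \perfect(B)$ for $N$ large.

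The final characterisation is then almost formal: if $\mathfrak{S}'_{\mathcal{C}}(\mathcal{S}) = \perfect(B)$, then $B$ trivially lies in it and $\perfect(B)$ is idempotent complete. Conversely, under the two hypotheses $\mathfrak{S}'_{\mathcal{C}}(\mathcal{S})$ is an idempotent-complete triangulated subcategory of $\perfect(B)$ containing the compact generator $B$, hence coincides with $\thick_{\derived(B)}(B) = \perfect(B)$. The hard part of this proof will be case (ii): Neeman's classification hands us the smashing subcategory, but not every specialisation closed subset arises from a homological epimorphism, and the coherence hypothesis on the complement is exactly what upgrades the abstract Bousfield localisation to a derived tensor with a ring. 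Pinning down the correct reference for that upgrade is the main technical input; once it is in hand, the recollement comparison and the collapse of the Cauchy sequence are routine.
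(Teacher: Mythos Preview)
Your proposal is correct and follows essentially the same route as the paper's proof: obtain the homological epimorphism from Krause--\v{S}\v{t}ov\'{\i}\v{c}ek in case~(i) and from the coherent-complement criterion in case~(ii), identify $\mathfrak{C}'_{\mathcal{C}}(\mathcal{S})$ with $\derived(B)$ via the recollement, and then collapse the Cauchy sequence by applying $\blank\otimes^{\mathbf{L}}_A B$; the final ``if and only if'' is handled identically via $\perfect(B)=\thick(B)$. The only cosmetic difference is that the paper passes without loss of generality to a subsequence with all cones in $\mathcal{C}$ (so the tensored sequence is constant from the first term), whereas you phrase it as ``eventually an isomorphism'' and collapse to $E_N$ for $N$ large; and the reference you are looking for in case~(ii) is \cite[Theorem~4.9, Corollary~4.10]{HugelMarksStovicekTakahashiVitoria20}.
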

    
\begin{proof}
We start by obtaining a recollement diagram from Lemma~\ref{L:SmashingRecollement} applied to the good extension \(\mathcal{S}\xhookrightarrow{}\derived(A)\) and the subcategory \(\mathcal{C}\).

If ``i)'' holds, then the~desired homological epimorphism \(A\rightarrow B\) such that \(\Loc(\mathcal{C})=\Ker\left(\blank\otimes^\mathbf{L}_AB\right)\) and \(\derived(B)\simeq\Loc(\mathcal{C})^\perp\)
exists
by \cite[Section~3, Theorem~8.1]{KrauseStovicek10}.

Now, assume ``ii)'' holds.
By the bijection from Theorem~\ref{T:TelescopeConjectureCommutativeNoetherian}, the category \(\Loc(\mathcal{C})\) consists of objects of \(\derived(A)\) supported at \(\Supp(\mathcal{C})\). 
As \(\Supp(\mathcal{C})\) has a coherent complement, \cite[Theorem~4.9, Corollary~4.10]{HugelMarksStovicekTakahashiVitoria20} yields the existence of a~homological (even flat) epimorphism \(A\rightarrow B\) again satisfying \(\Loc(\mathcal{C})=\Ker\left(\blank\otimes^\mathbf{L}_AB\right)\) and \(\derived(B)\simeq\Loc(\mathcal{C})^\perp\).

Regardless of whether ``i)'' or ``ii)'' happens,
it follows for the completion of \(\mathcal{S}\) that \(\mathfrak{C}'_{\mathcal{C}}(\mathcal{S})=\Loc(\mathcal{C})^\perp=\derived(B)\), so \(\mathfrak{S}'_{\mathcal{C}}(\mathcal{S})\subseteq\derived(B)\).

Let \(E=\hoco \mathbf{E}\in\mathfrak{S}'_{\mathcal{C}}(\mathcal{S})\) for a Cauchy sequence \(\mathbf{E}=(E_n,e_n)_{n\in\N}\). We may w.l.o.g.\ assume \(\cone(e_{n,m})\in\mathcal{C}\) for all \(n\leq m\in\N\).
Since \(\mathcal{C}\subseteq\Ker\left(\blank\otimes^\mathbf{L}_AB\right)\), we get that
\[
E\simeq E\otimes^\mathbf{L}_AB\simeq\left(\hoco \mathbf{E}\right)\otimes^\mathbf{L}_AB\simeq\hoco\left(\mathbf{E}\otimes^\mathbf{L}_AB\right)
\]
is a homotopy colimit of a constant sequence \(\mathbf{E}\otimes^\mathbf{L}_AB\) in \(\derived(B)\) consisting of isomorphisms. Suppose \(E_1\simeq P^\bullet\) for a bounded complex \(P^\bullet\) of finitely generated projective $A$-modules. Then \(E\simeq E_1\otimes^\mathbf{L}_AB\simeq P^\bullet\otimes_AB\) is isomorphic to a bounded complex of finitely generated projective $B$-modules, so \(E\in\perfect(B)\). As $E$ was arbitrary, we get \(\mathfrak{S}_{\mathcal{C}}'(\mathcal{S})\subseteq\perfect(B)\).

The final part of the statement follows directly from \(\perfect(B)=\thick(B)\).
\end{proof}

The ring $B$ above is obtained as the endomorphism ring \(\End_A(i_*i^*A)\) using the~recollement notation of Lemma~\ref{L:SmashingRecollement} and is uniquely determined by the thick subcategory \(\mathcal{C}\subseteq\mathcal{S}\).

A universal localisation is a generalisation of a localisation of a commutative ring at a multiplicative set.

\begin{definition}[{\cite[Theorem~4.1]{Schofield85}}]
Let \(f:A\rightarrow B\) be a ring homomorphism and \(\Phi\) a set of $R$\=/homomorphisms between finitely generated projective $R$-modules.
The~ring homomorphism $f$ is called \(\Phi\)\textit{-inverting} if for all \(\alpha:P\rightarrow Q\) in \(\Phi\), the~$B$\=/module homomorphism \(\alpha\otimes_A\id_B:P\otimes_AB\rightarrow Q\otimes_AB\) is an isomorphism.

The ring homomorphism $f$ (together with the ring $B$) is called a \textit{universal localisation of} $A$ \textit{at} \(\Phi\) if $f$ is \(\Phi\)-inverting, and for every \(\Phi\)-inverting ring homomorphism \(f':A\rightarrow B'\) there exits a unique ring homomorphism \(g:B\rightarrow B'\) such that \(f'=gf\).
\end{definition}

Under favourable conditions, universal localisations and homological epimorphisms coincide, thus allowing us to express metric completions with respect to constant metrics as subcategories of derived categories of universal localisations.  

\begin{corollary}
\label{C:CompletionUniversalLocalisation}
Assume $A$ is a hereditary ring or a commutative noetherian ring of Krull dimension at most~$1$. Let \(\mathcal{C}\subseteq\mathcal{S}\) be a thick subcategory. Then there exists a~universal localisation \(A\rightarrow B\) such that \(\mathfrak{S}_{\mathcal{C}}'(\mathcal{S})\subseteq\perfect(B)\) in the sense of Proposition~\ref{P:HomologicalEpimorphismDeterminingCompletion}.
\end{corollary}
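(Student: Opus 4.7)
The plan is to apply Proposition~\ref{P:HomologicalEpimorphismDeterminingCompletion} to obtain a homological epimorphism $A\to B$ with $\mathfrak{S}_{\mathcal{C}}'(\mathcal{S})\subseteq\perfect(B)$, and then to upgrade this homological epimorphism to a universal localisation under either of the two structural hypotheses on $A$.

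If $A$ is hereditary, Proposition~\ref{P:HomologicalEpimorphismDeterminingCompletion}(i) applies directly and supplies a homological epimorphism $A\to B$ characterised by $\Loc(\mathcal{C})=\Ker(\blank\otimes^\mathbf{L}_AB)$. To identify $B$ with a universal localisation I would invoke \cite[Section~3, Theorem~8.1]{KrauseStovicek10} once more, which, for a hereditary ring, associates to every thick subcategory $\mathcal{C}\subseteq\perfect(A)$ a universal localisation of $A$ obtained by inverting the set of differentials in any chosen family of perfect complexes generating $\mathcal{C}$ as a thick subcategory. Since both the $B$ delivered by Proposition~\ref{P:HomologicalEpimorphismDeterminingCompletion} and the universal localisation of $A$ at $\mathcal{C}$ satisfy the same characterisation $\Loc(\mathcal{C})=\Ker(\blank\otimes^\mathbf{L}_A\blank)$, they are isomorphic as ring homomorphisms out of $A$, and case (i) is complete.

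If $A$ is commutative noetherian of Krull dimension at most $1$, I would first verify the coherence hypothesis of Proposition~\ref{P:HomologicalEpimorphismDeterminingCompletion}(ii). Since $A$ is noetherian and every element of $\mathcal{C}\subseteq\perfect(A)$ is a bounded complex of finitely generated projective modules, the Foxby support of any $X\in\mathcal{C}$ agrees with $\bigcup_{i\in\Z}\Supp(H^i(X))$, whence $\Supp(\mathcal{C})$ is specialisation closed with coherent complement. Proposition~\ref{P:HomologicalEpimorphismDeterminingCompletion}(ii) then produces a flat homological epimorphism $A\to B$ with $\mathfrak{S}_{\mathcal{C}}'(\mathcal{S})\subseteq\perfect(B)$, and the Krull dimension hypothesis allows me to promote this to a universal localisation by appealing to \cite[Corollary~4.10]{HugelMarksStovicekTakahashiVitoria20}: over a commutative noetherian ring of Krull dimension at most $1$, every flat ring epimorphism is already a universal localisation.

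The main obstacle is the identification of the abstractly produced homological epimorphism with the specific universal localisation predicted by the literature, rather than its mere existence. In the hereditary case this is handled by the uniqueness built into Krause–Šťovíček's bijection between thick subcategories of $\perfect(A)$ and universal localisations, while in the commutative case it rests on the fine interplay between smashing subcategories, flat ring epimorphisms, and universal localisations, an equivalence which genuinely breaks down once the Krull dimension exceeds one and therefore forces the dimension restriction in the statement.
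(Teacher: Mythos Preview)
Your overall strategy matches the paper's: apply Proposition~\ref{P:HomologicalEpimorphismDeterminingCompletion} to produce a homological epimorphism, then upgrade it to a universal localisation using the structural hypothesis on $A$. The paper does this more directly, simply citing that under either hypothesis \emph{every} homological epimorphism is already a universal localisation (\cite[Theorem~6.1]{KrauseStovicek10} for hereditary, \cite[Theorem~5.7]{HugelMarksStovicekTakahashiVitoria20} for commutative noetherian of Krull dimension at most~$1$), so no uniqueness argument is needed.

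There is, however, a genuine gap in your verification of the coherence hypothesis in the commutative case. You write that because the Foxby support of each $X\in\mathcal{C}$ agrees with $\bigcup_i\Supp(H^i(X))$, the set $\Supp(\mathcal{C})$ is specialisation closed with coherent complement. But coherence of a subset $\Phi\subseteq\Spec(A)$ is a condition on \emph{all} complexes in $\derived(A)$, not just the perfect complexes lying in $\mathcal{C}$: one needs $\Supp(X)\subseteq\Phi\Leftrightarrow\Supp\big(\coprod_iH^i(X)\big)\subseteq\Phi$ for every $X\in\derived(A)$. Nothing in your argument uses the Krull dimension bound, and indeed without it the complement of a specialisation closed set need not be coherent. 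The paper fills this by invoking \cite[Corollary~4.3]{Krause08}, which says that for a commutative noetherian ring of Krull dimension at most~$1$ every subset of $\Spec(A)$ is coherent; this is where the dimension hypothesis actually enters the verification of condition~(ii).
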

    
\begin{proof}
Under the conditions on $A$, every homological epimorphism \(A\rightarrow B\) is a~universal localisation. This is due to \cite[Theorem~6.1]{KrauseStovicek10} in the hereditary case and due to \cite[Theorem~5.7]{HugelMarksStovicekTakahashiVitoria20} in the case of a commutative noetherian ring of Krull dimension at most~$1$.

The only thing remaining to be checked is that conditions of Proposition~\ref{P:HomologicalEpimorphismDeterminingCompletion} are, in fact, satisfied. For $A$ hereditary, there is nothing to check. 
If $A$ is commutative noetherian of~Krull dimension at most $1$, then all the subsets of \(\Spec(A)\) are coherent by \cite[Corollary~4.3]{Krause08}, including \(\Spec(A)\setminus\Supp(\mathcal{C})\). With that we may conclude the proof.
\end{proof}

Note that if $R$ is hereditary, then \(R_\mathcal{C}\) is also hereditary by \cite[Lemma~1.4]{NeemanRanickiSchofield04}, so \(\perfect(R_{\mathcal{C}})\simeq\derived^b(\modf\dashmodule R_{\mathcal{C}})\).

\begin{notation}
Given a hereditary ring $R$ and a thick subcategory \(\mathcal{C}\subseteq\perfect(R)\), we shall denote the universal localisation determined by \(\mathcal{C}\) in the sense of Corollary~\ref{C:CompletionUniversalLocalisation} as \(R_{\mathcal{C}}\). 
We will then always implicitly treat \(\derived(R_\mathcal{C})\) as a thick subcategory of~\(\derived(R)\) and the~completion \(\mathfrak{S}'_{\mathcal{C}}\big(\perfect(R)\big)\subseteq\derived(R)\) as a triangulated subcategory of \(\derived^b(\modf\dashmodule R_{\mathcal{C}})\simeq\perfect(R_\mathcal{C})\subseteq\derived(R)\) using Proposition~\ref{P:HomologicalEpimorphismDeterminingCompletion}.
\end{notation}

\section{Computational tools for hereditary and noetherian rings}
\label{sec:tools}

This section contains multiple new results about completions of bounded derived categories of certain, mostly noetherian hereditary, rings. Those facts will be useful later for the classification of completions over commutative hereditary noetherian rings (Section~\ref{sec:CommutativeNoetherian}) and hereditary tame algebras (Section~\ref{sec:TameAlgebras}).
Among other things, for a suitable hereditary ring $R$, we show how to construct Cauchy sequences with uniformly bounded cohomology (Corollary~\ref{C:UniformBoundedCohomologyCauchySequenceRestrictionGeneralised}); that \(\mathfrak{S}'\big(\derived^b(\modf\dashmodule R)\big)\) is idempotent complete (Proposition~\ref{P:CompletionIsIdempotentComplete}); and explicitly describe completions of \(\mathfrak{S}'\big(\derived^b(\modf\dashmodule R)\big)\) with respect to constant additive metrics (Proposition~\ref{P:CountablyGeneratedCompletion}).
We also recall the construction of \(\mathcal{B}\)-trivial sequences by Cummings and Gratz in Proposition~\ref{P:CummingsGratzTrivialisationArgument}.

Through this whole section we work with a ring $R$ which is a finitely generated \(\kk\)-algebra over a commutative noetherian ring $\kk$.
Typical choices of \(\kk\) will be \(\kk=K\) for $K$ a field, thus making $R$ into a finite dimensional $K$-algebra, or \(\kk=R\) when $R$ itself is commutative noetherian.

We fix a notation \(\mathcal{S}:=\perfect(R)\). Note that \(\mathcal{S}\simeq\derived^b(\modf\dashmodule R)\) whenever $R$ is of finite global dimension. We also denote \(\mathcal{T}:=\derived(R)\) and note that the inclusion \(\mathcal{S}\xhookrightarrow{}\mathcal{T}\) is a good extension, so all (pre)completions will be calculated inside \(\mathcal{T}\).

\begin{lemma}
\label{L:PickingMaximalSummands}
Let \(0\in\mathcal{C}\) be a replete subcategory of~\(\mathcal{K}:=\HomotopyCategory^b(\proj\dashmodule R)\). Then for any two objects \(X,Y\in\mathcal{K}\) it holds that \(\Hom_{\mathcal{K}}(X,Y)\) is a noetherian \(\kk\)-module.

In particular, every object \(U\in\mathcal{K}\) contains a maximal direct summand \(V\in\mathcal{C}\).
\end{lemma}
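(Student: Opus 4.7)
The two conclusions are logically distinct: the noetherianity of $\Hom_\mathcal{K}(X,Y)$ makes no reference to $\mathcal{C}$. I would therefore establish the noetherian claim in full generality first and then derive the existence of a maximal $\mathcal{C}$-summand as a consequence.

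For the noetherian claim, I would use that any $X,Y\in\mathcal{K}$ are bounded complexes of finitely generated projective $R$-modules. The standard hom-complex presentation exhibits $\Hom_{\mathcal{K}}(X,Y)$ as a $\kk$-subquotient of the finite direct sum $\bigoplus_n \Hom_R(X^n,Y^n)$. Each factor is a $\kk$-direct summand of some $\Hom_R(R^{a_n},R^{b_n})\simeq R^{a_nb_n}$, so it is enough to observe that $R$ itself is a noetherian $\kk$-module---this is guaranteed by the typical scenarios spelled out just before the lemma ($\kk=K$ with $R$ finite-dimensional, or $\kk=R$ commutative noetherian). Finite direct sums and $\kk$-subquotients of noetherian $\kk$-modules remain noetherian, so the first assertion follows.

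For the ``in particular'' part I would translate direct summands into idempotents: a direct summand $V$ of $U$ corresponds to an idempotent $e_V\in\End_{\mathcal{K}}(U)$ with image $V$, and the relation ``$V$ is a direct summand of $V'$'' becomes the pair of identities $e_Ve_{V'}=e_V=e_{V'}e_V$. Because $\mathcal{C}$ is replete and contains $0$, the poset $\mathcal{P}$ of idempotents in $\End_{\mathcal{K}}(U)$ with image in $\mathcal{C}$ is nonempty, so producing a maximal element reduces to verifying ACC on $\mathcal{P}$. Given a strictly ascending chain $e_1<e_2<\cdots$ in $\mathcal{P}$, the differences $f_i:=e_{i+1}-e_i$ are a quick calculation away from being nonzero pairwise orthogonal idempotents; the successive spans $\sum_{i=1}^N\kk f_i$ then form a strictly ascending chain of $\kk$-submodules of $\End_{\mathcal{K}}(U)$ (any $\kk$-linear relation $f_{N+1}=\sum_{i\leq N}c_if_i$ collapses to $f_{N+1}=0$ upon postmultiplication by $f_{N+1}$), contradicting the first part.

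The principal technical obstacle is ensuring the passage ``finitely generated $R$-module $\Rightarrow$ noetherian $\kk$-module'' in the first step, which genuinely requires $R$ to be noetherian as a $\kk$-module---this is automatic in all the concrete cases the paper cares about but is strictly stronger than $R$ being merely a finitely generated $\kk$-algebra. Once this is in place, the idempotent manipulation in the second part is essentially formal bookkeeping.
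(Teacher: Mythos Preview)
Your argument is correct and follows essentially the same route as the paper: first show that $\Hom_{\mathcal{K}}(X,Y)$ is a subquotient of a finite free $R$-module, hence a noetherian $\kk$-module, and then derive the existence of a maximal $\mathcal{C}$-summand from the noetherianity of $\End_{\mathcal{K}}(U)$. The paper's proof simply asserts that the failure of maximality would produce ``an infinite, strictly increasing chain of $\kk$-submodules of $\End_{\mathcal{K}}(U)$'' without naming those submodules; your orthogonal-idempotent construction makes this step explicit and is a perfectly clean way to realise that chain. Your observation that the step ``$R$ is a noetherian $\kk$-module'' requires $R$ to be module-finite over $\kk$, rather than merely a finitely generated $\kk$-algebra, is well taken---the paper's proof uses exactly this, and it is indeed only guaranteed in the two concrete situations singled out in the text.
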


\begin{proof}
Let \(X,Y\in\mathcal{K}\). Then the $\kk$-module \(\Hom_{\mathcal{K}}(X,Y)\) is a quotient of the \(\kk\)\=/module \(\Hom_{\Chain^b(\proj\dashmodule R)}(X,Y)\) by null-homotopic maps, while \(\Hom_{\Chain^b(\proj\dashmodule R)}(X,Y)\) is a~$\kk$\=/submodule of \(\End_{R}(F)\) for some finitely generated free $R$-module $F$. As $R$ is a noetherian \(\kk\)-module, \(\Hom_{\mathcal{K}}(X,Y)\) is then a noetherian \(\kk\)-module as well. 

Now let \(U\in\mathcal{K}\).
There definitely exists some direct summand of $U$ belonging to \(\mathcal{C}\), namely the~zero object. If there did not exist a maximal direct summand from $\mathcal{C}$ of $U$, we would be able to use induction to construct an infinite, strictly increasing chain of \(\kk\)\=/submodules of~\(\End_{\mathcal{K}}(U)=\Hom_{\mathcal{K}}(U,U)\), which is impossible in a noetherian module.
\end{proof}

We state a variant of \cite[Lemma~3.17]{CummingsGratz24} for perfect complexes over \(\kk\)-algebras.

\begin{proposition}
\label{P:CummingsGratzTrivialisationArgument}
Let $\mathcal{B}$ and $\mathcal{C}$ be replete subcategories of $\mathcal{S}$ such that $\mathcal{C}$ is closed under extensions and direct summands. Let $\mathbf{E}$ be a sequence in $\mathcal{S}$ that is compactly supported at $\mathcal{B}$. Then there exists a~subsequence $\mathbf{E_I}$ of $\mathbf{E}$ and a $\mathcal{B}$-trivial component $\mathbf{\tilde{E}_I}$ of $\mathbf{E_I}$ such that
    \[
        \moco \mathbf{E} \simeq \moco \mathbf{\tilde{E}_I},
    \]
    and $\mathbf{\tilde{E}_I}$ is compactly supported at $\mathcal{B}$. Moreover, if $\mathbf{E}$ stabilises at $\mathcal{C}$, then so does~$\mathbf{\tilde{E}_I}$.
\end{proposition}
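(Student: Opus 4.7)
The plan is to use Lemma~\ref{L:PickingMaximalSummands} to peel off a maximal $\mathcal{B}$-summand from each $E_n$, and then exploit compact support at $\mathcal{B}$ to pass to a subsequence along which these summands are annihilated by the transition maps. The component obtained by discarding the $\mathcal{B}$-parts then carries the same module colimit as $\mathbf{E}$, and stabilisation at $\mathcal{C}$ transfers via a direct-summand decomposition of the relevant cones.

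First, Lemma~\ref{L:PickingMaximalSummands} supplies, for each $n$, a decomposition $E_n \simeq B_n \oplus F_n$ with $B_n \in \mathcal{B}$ maximal, making $F_n$ automatically $\mathcal{B}$-trivial. Denote the inclusion $\nu_n \colon F_n \to E_n$, the projection $\pi_n \colon E_n \to F_n$, and the inclusion $\nu^B_n \colon B_n \to E_n$. Compact support of $\mathbf{E}$ at $\mathcal{B}$ forces $\moco\mathbf{E}(B_n) = \varinjlim_m \Hom_{\mathcal{S}}(B_n, E_m) = 0$, so there is $\phi(n) > n$ with $e_{n, \phi(n)} \circ \nu^B_n = 0$. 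Iterating with $i_1 := 1$ and $i_{k+1} := \phi(i_k)$ produces $\mathbf{I} = (i_k)$ along which the $\mathcal{B}$-part of every consecutive transition vanishes. I then take $\tilde{\mathbf{E}}_\mathbf{I} := (F_{i_k}, \pi_{i_{k+1}} \circ e_{i_k, i_{k+1}} \circ \nu_{i_k})_{k \in \N}$, which is a $\mathcal{B}$-trivial component of $\mathbf{E}_\mathbf{I}$. For the module colimit, the vanishing of the $\mathcal{B}$-part of each transition assembles a short exact sequence of sequences in $\Mod\dashmodule\mathcal{S}$
\[
0 \to \bigl(\Yoneda(B_{i_k}), 0\bigr)_{k \in \N} \to \Yoneda(\mathbf{E}_\mathbf{I}) \to \Yoneda(\tilde{\mathbf{E}}_\mathbf{I}) \to 0;
\]
exactness of $\varinjlim$ kills the leftmost term (all its transitions are zero), and combined with the standard $\moco\mathbf{E} \simeq \moco\mathbf{E}_\mathbf{I}$, delivers $\moco\mathbf{E} \simeq \moco\tilde{\mathbf{E}}_\mathbf{I}$. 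Compact support of $\tilde{\mathbf{E}}_\mathbf{I}$ at $\mathcal{B}$ follows at once.

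For the stabilisation claim, the crucial observation is that since $e_{i_k, i_\ell}$ vanishes on $B_{i_k}$, it factors as $\tilde{e}_{k, \ell} \circ \pi_{i_k}$ for a unique $\tilde{e}_{k, \ell} \colon F_{i_k} \to E_{i_\ell}$; because $\pi_{i_k}$ is a split epimorphism with kernel $B_{i_k}$, this yields the splitting
\[
\cone(e_{i_k, i_\ell}) \simeq \cone(\tilde{e}_{k, \ell}) \oplus \Sigma B_{i_k}.
\]
Summand closure of $\mathcal{C}$ then extracts both $\cone(\tilde{e}_{k, \ell}) \in \mathcal{C}$ and $\Sigma B_{i_k} \in \mathcal{C}$ whenever $\cone(e_{i_k, i_\ell}) \in \mathcal{C}$; the analogous splitting at index $\ell$ further gives $\Sigma B_{i_\ell} \in \mathcal{C}$. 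The octahedral axiom on $F_{i_k} \xrightarrow{\tilde{e}_{k, \ell}} E_{i_\ell} \xrightarrow{\pi_{i_\ell}} F_{i_\ell}$ produces a triangle $\cone(\tilde{e}_{k, \ell}) \to \cone(\beta_{k, \ell}) \to \Sigma B_{i_\ell}$, and extension closure of $\mathcal{C}$ concludes $\cone(\beta_{k, \ell}) \in \mathcal{C}$. The principal obstacle I expect is securing this cone splitting: it is exactly what lets summand and extension closure of $\mathcal{C}$ cooperate, and it bypasses the potential gap between $\mathcal{B}$ and $\mathcal{C}$ that would otherwise thwart the extension argument.
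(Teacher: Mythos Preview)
Your argument is correct and follows the same strategy as the paper, which simply cites \cite[Lemma~3.17]{CummingsGratz24} and observes that Lemma~\ref{L:PickingMaximalSummands} replaces the Krull--Schmidt hypothesis used there; you have effectively written out that cited proof in full. One small point: the claim that $F_n$ is ``automatically'' $\mathcal{B}$-trivial requires $\mathcal{B}$ to be closed under finite direct sums (which is not assumed), but this is harmless since you may instead apply Lemma~\ref{L:PickingMaximalSummands} to $\coprodf(\mathcal{B})$ and use additivity of $\moco\mathbf{E}$ to retain $\moco\mathbf{E}(B_n)=0$.
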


\begin{proof}
The exact same proposition was proved by Cummings and Gratz in \cite[Lemma~3.17]{CummingsGratz24} for \(\mathcal{S}\) being a Krull-Schmidt triangulated category. The Krull-Schmidt property of \(\mathcal{S}\) is used in the beginning of the proof in order to pick a maximal direct summand of $E$ belonging to \(\mathcal{B}\) for any object \(E\in\mathcal{S}\).
However, by Lemma~\ref{L:PickingMaximalSummands} we may perform the same selection of a maximal direct summand in the category \(\mathcal{S}=\perfect(R)\simeq\HomotopyCategory^b(\proj\dashmodule R)\).
Therefore, the original proof of \cite[Lemma~3.17]{CummingsGratz24} also applies \textit{verbatim} as a proof of this proposition.
\end{proof}

\begin{corollary}
\label{C:CummingsGratzTrivialisationForSequences}
Equip \(\mathcal{S}\) with an~additive good metric.
Let \(\mathcal{B}\) be a~replete subcategory of~\(\mathcal{S}\).
If \(\mathbf{E}\) is a Cauchy sequence compactly supported at \(\mathcal{B}\), then there exists a \(\mathcal{B}\)\=/trivial component \(\mathbf{F}\) of a subsequence of~\(\mathbf{E}\) such that \(\mathbf{F}\) is Cauchy and compactly supported at \(\mathcal{B}\), and \(\moco \mathbf{E}\simeq\moco \mathbf{F}\).
\end{corollary}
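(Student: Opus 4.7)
The approach is to apply Proposition~\ref{P:CummingsGratzTrivialisationArgument} once and then exploit its ``moreover'' clause uniformly across every ball of the additive good metric.

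The first step is to observe that the auxiliary hypothesis on $\mathcal{C}$ in Proposition~\ref{P:CummingsGratzTrivialisationArgument} (being replete, extension-closed, and closed under direct summands) is satisfied by every ball $B_n$ of the additive good metric $\mathcal{M} = \{B_n\}_{n \in \N}$: the defining axioms of a good metric force $B_n$ to be extension-closed and contain~$0$, while additivity of $\mathcal{M}$ means exactly that each $B_n$ is closed under direct summands.

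Next, I would apply Proposition~\ref{P:CummingsGratzTrivialisationArgument} to $\mathbf{E}$ with respect to the replete subcategory $\mathcal{B}$ at which $\mathbf{E}$ is compactly supported, producing a subsequence $\mathbf{E_I}$ together with a $\mathcal{B}$-trivial component $\mathbf{F} := \mathbf{\tilde{E}_I}$ of $\mathbf{E_I}$ such that $\moco \mathbf{E} \simeq \moco \mathbf{F}$ and $\mathbf{F}$ is compactly supported at $\mathcal{B}$. The crucial point, and the one on which the whole argument hinges, is that the construction of the index set $\mathbf{I}$ and of the component $\mathbf{F}$ in the proof of Proposition~\ref{P:CummingsGratzTrivialisationArgument} depends only on $\mathcal{B}$ (through the selection of maximal $\mathcal{B}$-summands guaranteed by Lemma~\ref{L:PickingMaximalSummands}); the auxiliary subcategory $\mathcal{C}$ plays no role in the construction itself and only enters through the final \emph{property} recorded in the moreover clause. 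Hence the same $\mathbf{F}$ obtained here is eligible to inherit stabilisation from any replete, extension- and summand-closed~$\mathcal{C}$.

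Finally, to conclude that $\mathbf{F}$ is Cauchy with respect to $\mathcal{M}$, I would invoke the moreover clause of Proposition~\ref{P:CummingsGratzTrivialisationArgument} once for each $n \in \N$, taking $\mathcal{C} = B_n$: since $\mathbf{E}$ is Cauchy, it stabilises at $B_n$, and the moreover clause then gives that the same $\mathbf{F}$ stabilises at $B_n$. As this holds for every $n$, the component $\mathbf{F}$ is Cauchy. Together with the properties extracted in the previous step, this delivers all the conclusions demanded by the corollary. The main (and really the only) obstacle is checking the independence of the construction from $\mathcal{C}$, which is why it is important that Proposition~\ref{P:CummingsGratzTrivialisationArgument} is proved \emph{verbatim} from the Cummings--Gratz argument rather than re-engineered: one simply inspects that the choices made in their proof involve only $\mathcal{B}$-summands.
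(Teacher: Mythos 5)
Your proposal is correct and is essentially the paper's own (implicit) derivation: the corollary is meant to follow immediately from Proposition~\ref{P:CummingsGratzTrivialisationArgument} by taking $\mathcal{C}=B_n$ for each ball of the additive metric, exactly as you do. The point you rightly flag -- that the subsequence and component are built only from $\mathcal{B}$ and the compact-support hypothesis, with $\mathcal{C}$ entering solely in the verification of the ``moreover'' clause -- is indeed what makes the simultaneous application over all $n$ legitimate, so your argument matches the intended one.
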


For hereditary rings, the above construction can be also used to change a generic Cauchy sequence into a Cauchy sequence with all entries uniformly bounded in cohomology.

\begin{corollary}
\label{C:UniformBoundedCohomologyCauchySequenceRestrictionGeneralised}
Assume $R$ is a hereditary ring, and equip \(\mathcal{S}\) with an~additive good metric. Let \(\mathbf{E}\) be a Cauchy sequence.
If \(E:=\hoco \mathbf{E}\in\mathcal{T}\) has cohomology concentrated in degrees \(N,\ldots,M\) for some \(N\leq M\in\Z\), then there exists a Cauchy sequence \(\mathbf{F}=(F_n,f_n)_{n\in\N}\) such that \(\hoco F\simeq E\) and for all \(n\in\N\) the object \(F_n\) has cohomology concentrated in degrees \(N,\ldots,M+1\).
\end{corollary}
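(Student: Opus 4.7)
The strategy is to realise $\mathbf{F}$ as a component of a subsequence of $\mathbf{E}$ produced by the trivialisation mechanism of Corollary~\ref{C:CummingsGratzTrivialisationForSequences}, applied to the replete subcategory
\[
\mathcal{B} := \{X \in \mathcal{S} : H^i(X) = 0 \text{ for every } i \in [N, M+1]\}
\]
of perfect complexes with no cohomology in the window $[N, M+1]$. Clearly $0 \in \mathcal{B}$. The plan then reduces to three tasks: (i) check that $\mathbf{E}$ is compactly supported at $\mathcal{B}$; (ii) extract a $\mathcal{B}$-trivial Cauchy component $\mathbf{F}$ via Corollary~\ref{C:CummingsGratzTrivialisationForSequences}; (iii) convert $\mathcal{B}$-triviality into the desired cohomological bound and identify $\hoco \mathbf{F}$ with $E$.

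For (i), the good extension property, combined with $\mathbf{E}$ being Cauchy, yields $\moco \mathbf{E} \simeq \YonedaKatakana(E) = \Hom_{\mathcal{T}}(\blank, E)|_{\mathcal{S}}$, so the task reduces to checking $\Hom_{\mathcal{T}}(X, E) = 0$ for every $X \in \mathcal{B}$. Since $R$ is hereditary, $X$ decomposes in $\derived(R)$ as $X \simeq \bigoplus_{j \notin [N, M+1]} \Sigma^{-j} H^j(X)$; and the Ext hyper-spectral sequence degenerates (as $\Ext^p = 0$ for $p \geq 2$) into a short exact sequence
\[
0 \to \Ext^1_R(M, H^{-1}(Y)) \to \Hom_{\mathcal{T}}(M, Y) \to \Hom_R(M, H^0(Y)) \to 0
\]
for every module $M$ and complex $Y$. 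With $M = H^j(X)$ and $Y = \Sigma^j E$ this involves only $H^j(E)$ and $H^{j-1}(E)$, both of which vanish whenever $j \leq N-1$ or $j \geq M+2$ because $E$'s cohomology is confined to $[N, M]$. The one-sided widening of the forbidden window to $[N, M+1]$ is precisely what kills the $\Ext^1$ contribution at $j = M+1$, and is the reason why $F_n$ ends up with cohomology in $[N, M+1]$ rather than exactly $[N, M]$.

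Given (i), Corollary~\ref{C:CummingsGratzTrivialisationForSequences} produces a $\mathcal{B}$-trivial Cauchy component $\mathbf{F} = (F_n, f_n)_{n \in \N}$ of a subsequence of $\mathbf{E}$ with $\moco \mathbf{F} \simeq \moco \mathbf{E}$. Since $\perfect(R)$ is idempotent complete inside $\derived(R)$, the hereditary decomposition $F_n \simeq \bigoplus_i \Sigma^{-i} H^i(F_n)$ already lives in $\perfect(R)$; any nonzero summand $\Sigma^{-i} H^i(F_n)$ with $i \notin [N, M+1]$ would lie in $\mathcal{B}$ and contradict $\mathcal{B}$-triviality, forcing the cohomology of each $F_n$ into $[N, M+1]$. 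Finally, the good-extension identity $\moco(\blank) \simeq \YonedaKatakana(\hoco(\blank))$ turns $\moco \mathbf{F} \simeq \moco \mathbf{E}$ into $\YonedaKatakana(\hoco \mathbf{F}) \simeq \YonedaKatakana(E)$; evaluating on $\Sigma^{-i} R \in \perfect(R)$ yields $H^i(\hoco \mathbf{F}) \simeq H^i(E)$ for every $i$, and the hereditary decomposition in $\derived(R)$ then upgrades this to $\hoco \mathbf{F} \simeq E$. The delicate point of the plan is (i): the right endpoint of the forbidden window must be enlarged to $M+1$, since otherwise the $\Ext^1$ term picks up $H^M(E)$ and the vanishing argument collapses.
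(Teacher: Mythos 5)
Your proposal is correct and follows essentially the same route as the paper: the paper takes $\mathcal{B}=\mathcal{S}^{\leq N-1}\cup\mathcal{S}^{>M+1}$ (whose summand-closure under the hereditary splitting is exactly your class of complexes with no cohomology in $[N,M+1]$), asserts that $\mathbf{E}$ is compactly supported at $\mathcal{B}$, applies Corollary~\ref{C:CummingsGratzTrivialisationForSequences}, and converts $\mathcal{B}$-triviality into the cohomological bound via $F_n\simeq\coprod_i\Sigma^{-i}H^i(F_n)$. Your write-up merely makes explicit two points the paper leaves implicit — the $\Hom/\Ext^1$ vanishing behind the compact-support claim (including why the window must widen to $M+1$) and the identification $\hoco\mathbf{F}\simeq E$ via evaluation on shifts of $R$ — both of which are fine.
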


\begin{proof}
Let \(\left(\mathcal{S}^{\leq0},\mathcal{S}^{>0}\right)\) be the standard $t$-structure on \(\mathcal{S}\simeq\derived^b(\modf\dashmodule R)\).
The fact that \(E\in\mathcal{T}\) has cohomology concentrated in degrees \(N,\ldots,M\) implies that \(\mathbf{E}\) is compactly supported with respect to the full subcategory \(\mathcal{B}:=\mathcal{S}^{\leq N-1}\cup\mathcal{S}^{>M+1}\).

Since  the conditions of Corollary~\ref{C:CummingsGratzTrivialisationForSequences} are satisfied, there exists an \(\mathcal{B}\)-trivial Cauchy sequence \(\mathbf{F}=(F_n,f_n)_{n\in\N}\) in \(\mathcal{S}\) with \(\hoco \mathbf{F}\simeq E\) holding in \(\mathcal{T}\).
As \(\mathbf{F}\) is \(\mathcal{B}\)\=/trivial, for all \(n\in\N\) the object \(F_n\) does not contain a direct summand from \(\mathcal{B}\). This is equivalent to $F_n$ having cohomology concentrated in degrees \(N,\ldots,M+1\) because \(F_n\) is a coproduct of its cohomologies due to $R$ being hereditary.
\end{proof}

In a series of two lemmas and a proposition, we now aim to prove that \(\mathfrak{S}'(S)\) is idempotent complete for any choice of an additive good metric on \(\mathcal{S}\) whenever $R$ is hereditary.

The proof of the following lemma uses the idea behind Brüning's correspondence between wide categories of a hereditary abelian category and thick subcategories of its bounded derived category, see \cite[Lemma~5.2]{Bruning07}.

\begin{lemma}
\label{L:BruningWideTheoremForMetrics}
Assume $R$ is a hereditary ring. Equip \(\mathcal{S}\) with an~additive good metric \(\mathcal{M}=\{B_n\}_{n\in\N}\). Let \(\mathcal{C}\subseteq\modf\dashmodule R\) and \(n\in\N\). If \(\mathcal{C}\subseteq B_{n+2}\) as objects of \(\mathcal{S}\), then \(\wide(\mathcal{C})\subseteq B_n\).
\end{lemma}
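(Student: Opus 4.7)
The plan is to mimic the strategy of Br\"uning's \cite[Lemma~5.2]{Bruning07}, where the analogue of our $B_n$'s is cohomological degree bounds, while carefully tracking metric levels throughout. Write $\mathcal{E}$ for the extension closure of $\mathcal{C}$ in the abelian category $\modf\dashmodule R$; since $B_{n+2}$ is extension-closed and contains $\mathcal{C}$, we immediately have $\mathcal{E}\subseteq B_{n+2}$.

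The crucial calculation is for a single kernel/cokernel step. Given $f\colon A\to B$ in $\modf\dashmodule R$ with $A,B\in B_{n+2}$, I would rotate the distinguished triangle to $B\to\cone f\to\Sigma A\to\Sigma B$ in $\mathcal{S}$. The metric axioms give $\Sigma A\in\Sigma B_{n+2}\subseteq B_{n+1}$ and $B\in B_{n+2}\subseteq B_{n+1}$, so extension-closure of $B_{n+1}$ yields $\cone f\in B_{n+1}$. Because $R$ is hereditary, $\cone f\simeq\operatorname{coker}(f)\oplus\Sigma\ker(f)$ in $\derived(R)$, and additivity of $B_{n+1}$ gives $\operatorname{coker}(f)\in B_{n+1}$ and $\Sigma\ker(f)\in B_{n+1}$, whence $\ker(f)\in\Sigma^{-1}B_{n+1}\subseteq B_n$. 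In particular, one step of kernel or cokernel between elements of $\mathcal{E}$ already lands in $B_n$.

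To extend this to all of $\wide(\mathcal{C})$, I would inductively run Br\"uning's construction, realising every object of $\wide(\mathcal{C})$ from $\mathcal{C}$ by a finite sequence of kernels, cokernels, and extensions, arranging that each kernel/cokernel step involves at least one participant from $\mathcal{E}\subseteq B_{n+2}$. The bookkeeping is: a mixed-level morphism $g\colon A'\to B'$ with $A'\in\mathcal{E}$ and $B'$ previously constructed in $B_{n+1}$ still satisfies $\Sigma A', B'\in B_{n+1}$, so the same argument yields $\cone g\in B_{n+1}$, $\operatorname{coker}(g)\in B_{n+1}$, and $\ker(g)\in B_n$; extension closure then preserves $B_n$.

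The main obstacle is verifying that every $M\in\wide(\mathcal{C})$ does admit such a mixed-level presentation: namely, a construction where every kernel/cokernel step has at least one endpoint in $\mathcal{E}$, rather than naive iteration in which both endpoints may belong to $B_n$ (where further kernels would drop below $B_n$). Here I expect to exploit the hereditary structure of $\wide(\mathcal{C})$ itself, together with the characterisation $\wide(\mathcal{C})=H^0(\thick(\mathcal{C}))$ afforded by Br\"uning's correspondence: the fact that $X\in\thick(\mathcal{C})$ decomposes as $X\simeq\bigoplus_i\Sigma^{-i}H^i(X)$ in the hereditary setting, combined with the finite, controlled way in which $\thick(\mathcal{C})$ is built from $\mathcal{C}$, should express each $M\in\wide(\mathcal{C})$ as a direct summand of a suitable cone or extension rooted in $\mathcal{E}$-objects, pinning $M$ down in $B_n$.
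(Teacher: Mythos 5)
Your first two paragraphs are, essentially word for word, the paper's entire proof: the paper likewise notes that extensions of objects of \(\mathcal{C}\) stay in \(B_{n+2}\), passes to the triangle \(M\xrightarrow{f}N\rightarrow\cone f\rightarrow\Sigma M\) (you rotate it, the paper reads off the same extension), uses heredity to split \(\cone f\simeq\Coker f\oplus\Sigma\Ker f\), and performs exactly your bookkeeping: \(\Sigma M\in B_{n+1}\), \(N\in B_{n+1}\), extension- and summand-closure of \(B_{n+1}\) give \(\Coker f,\Sigma\Ker f\in B_{n+1}\), hence \(\Ker f,\Coker f\in B_n\). What the paper does \emph{not} contain is any counterpart of your last two paragraphs: it treats this one-round verification---run in the spirit of Brüning's Lemma~5.2, where the ambient subcategory is thick and the wideness axioms only ever need to be checked one step at a time---as completing the proof, and records no induction over a presentation of a general object of \(\wide(\mathcal{C})\), no ``mixed-level'' bookkeeping, and no appeal to the decomposition of objects of \(\thick(\mathcal{C})\) into shifts of their cohomologies.

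So the only place you diverge from the paper is precisely the place you leave open. Your worry is coherent: if one naively iterates the estimate, a kernel of a map between first-generation kernels only lands in \(B_{n-1}\), and each further generation loses another level, so something beyond the one-step computation is needed to keep all of \(\wide(\mathcal{C})\), rather than just the first round of kernels, cokernels and extensions, at level \(n\). But your proposed remedy---that every object of \(\wide(\mathcal{C})\) admits a construction in which each kernel/cokernel step has at least one endpoint in the extension closure of \(\mathcal{C}\)---is only announced (``I expect to exploit\dots''), not constructed, and no such mechanism is supplied by the paper's own proof either. In summary: you have reproduced everything the paper actually writes and correctly isolated the passage from one round of operations to the full wide closure as the sensitive point; as a standalone argument, your proposal is unfinished exactly there, and it is not a step you could have copied from the paper, whose recorded proof stops where your second paragraph stops.
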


\begin{proof}
We show that the closure of \(\mathcal{C}\) under extensions, kernels and cokernels (in \(\ModR\)) lies in \(B_n\).
Let \(M,N\in\mathcal{C}\). Any extension of $M$ by $N$ already lies in \(B_{n+2}\subseteq B_n\) by the definition of a good metric.
Let \(f:M\rightarrow N\) be a homomorphism of $R$-modules.
Consider the distinguished triangle
\[
M\xrightarrow{f}N\rightarrow \cone f \rightarrow \Sigma M
\]
in \(\mathcal{S}\). Since \(R\) is hereditary, we get \(\cone f\simeq \Sigma \Ker f\oplus \Coker f\). As \(M\in B_{n+2}\), we have \(\Sigma M\in B_{n+1}\). And since \(B_{n+1}\) is closed under extensions and direct summands, and \(N\in B_{n+1}\), we obtain \(\Sigma\Ker f,\Coker f\in B_{n+1}\), allowing us to conclude the~proof with \(\Ker f,\Coker f\in B_{n}\).
\end{proof}

\begin{lemma}
\label{L:CohomologiesSplitOff}
Assume $R$ is a hereditary ring. Equip \(\mathcal{S}\) with an~additive good metric \(\mathcal{M}=\{B_n\}_{n\in\N}\).
Let \(\left(\mathcal{T}^{\leq0},\mathcal{T}^{>0}\right)\) be the standard $t$\=/structure on \(\mathcal{T}\).
Let \(M\in\ModR\) and \(T\in\mathcal{T}^{> 1}\). If \(M\oplus T\in\mathfrak{S}'(\mathcal{S})\), then \(T,M\in\mathfrak{S}'(\mathcal{S})\).
\end{lemma}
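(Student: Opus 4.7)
The plan is to use the standard $t$-structure truncations to split the summands $M$ and $T$ at the level of Cauchy sequences. Starting from a Cauchy sequence $\mathbf{E} = (E_n, e_n)_{n \in \N}$ in $\mathcal{S}$ with $\hoco\,\nu(\mathbf{E}) \simeq M \oplus T$, I would form the truncated sequences $\tr^{\leq 1}\mathbf{E}$ and $\tr^{>1}\mathbf{E}$. Since $R$ is noetherian and hereditary, every bounded complex of $R$-modules with finitely generated cohomology is perfect, so $\tr^{\leq 1} E_n \simeq H^0(E_n) \oplus \Sigma^{-1} H^1(E_n)$ and $\tr^{>1} E_n$ lie in $\mathcal{S} = \perfect(R)$; the truncated sequences are genuine sequences in $\mathcal{S}$.

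The central step is to show that both truncated sequences are Cauchy with respect to $\mathcal{M} = \{B_k\}_{k \in \N}$. Applying the $3 \times 3$ lemma to the morphism of truncation triangles induced by $e_n$ produces a distinguished triangle
$$
\cone(\tr^{\leq 1} e_n) \to \cone(e_n) \to \cone(\tr^{>1} e_n) \to \Sigma \cone(\tr^{\leq 1} e_n).
$$
From the long exact cohomology sequences one sees that $\cone(\tr^{\leq 1} e_n) \in \mathcal{T}^{\leq 1}$ and $\cone(\tr^{>1} e_n) \in \mathcal{T}^{>1}$, and both are perfect. The hereditary hypothesis forces $\Ext^{\geq 2}_R$ to vanish, so decomposing each bounded complex into the coproduct of its shifted cohomologies yields $\Hom_{\mathcal{T}}(\cone(\tr^{>1} e_n), \Sigma \cone(\tr^{\leq 1} e_n)) = 0$. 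Hence the triangle splits, giving $\cone(e_n) \simeq \cone(\tr^{\leq 1} e_n) \oplus \cone(\tr^{>1} e_n)$. Additivity of $\mathcal{M}$ then propagates the Cauchy condition: whenever $\cone(e_n) \in B_k$, both direct summands lie in $B_k$.

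With both truncated sequences Cauchy, I would next identify their homotopy colimits. Since filtered colimits in $\ModR$ are exact, both truncation functors commute with sequential homotopy colimits in $\mathcal{T}$. Combined with $\tr^{\leq 1}(M \oplus T) = M$ (using $T \in \mathcal{T}^{>1}$) and $\tr^{>1}(M \oplus T) = T$ (using that $M$ is a module in degree $0$), this yields $M \simeq \hoco\,\nu(\tr^{\leq 1}\mathbf{E})$ and $T \simeq \hoco\,\nu(\tr^{>1}\mathbf{E})$, so $M, T \in \mathfrak{L}'(\mathcal{S})$. The compactly supported condition transfers immediately: via the identification $\mathfrak{C}'(\mathcal{S}) = \bigcup_{n \in \N} \nu(B_n)^{\perp}$, the hypothesis $M \oplus T \in \mathfrak{C}'(\mathcal{S})$ means $\Hom_{\mathcal{T}}(\nu(B), M \oplus T) = 0$ for every $B \in B_n$ and some $n$, and this restricts to each summand, giving $M, T \in \mathfrak{C}'(\mathcal{S})$ and hence $M, T \in \mathfrak{S}'(\mathcal{S})$.

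I expect the main obstacle to be the splitting of the $3 \times 3$ triangle, which crucially depends on the hereditary assumption; without it the connecting morphism would not generally vanish and the Cauchy condition might fail to transfer through the truncations. Additivity of the metric is equally essential for passing cone decompositions into membership in $B_k$.
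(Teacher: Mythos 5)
There is a genuine gap at the central step: the claimed vanishing of \(\Hom_{\mathcal{T}}\big(\cone(\tr^{>1}e_n),\Sigma\cone(\tr^{\leq1}e_n)\big)\) does not follow from \(R\) being hereditary. Decomposing both cones into shifted cohomologies, the source has cohomology in degrees \(\geq1\) and the target \(\Sigma\cone(\tr^{\leq1}e_n)\) has cohomology in degrees \(\leq0\), so the components of the connecting map are \(\Ext^{i-j}_R\)-groups with \(i-j\geq1\). Hereditariness only kills the terms with \(i-j\geq2\); the term with \(i-j=1\), namely \(\Ext^1_R\big(H^1(\cone(\tr^{>1}e_n)),\,H^1(\cone(\tr^{\leq1}e_n))\big)\), survives and is nonzero in general (already over \(\Z\) one has \(\Ext^1_{\Z}(\Z/p\Z,\Z)\neq0\)). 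Hence the \(3\times3\) triangle need not split, \(\cone(\tr^{\leq1}e_n)\) need not be a direct summand of \(\cone(e_n)\), and the Cauchy property does not transfer to the truncated sequences by additivity alone. This is exactly the phenomenon the paper warns about after introducing approximation sequences: truncations of Cauchy sequences are not Cauchy in general.

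Note that the obstruction is controlled by \(H^1\) of the entries: \(H^1(\cone(\tr^{\leq1}e_n))\) is a quotient of \(H^1(E_{n+1})\), so the splitting you want would hold if one could first arrange \(H^1(E_{n+1})=0\). That is precisely what the paper's proof does, and it is the part your argument is missing: after normalising the sequence (uniformly bounded cohomology via Corollary~\ref{C:UniformBoundedCohomologyCauchySequenceRestrictionGeneralised}, \(s\)-triviality via Corollary~\ref{C:CummingsGratzTrivialisationForSequences}, and \(H^1(e_n)=0\) using \(\varinjlim H^1(\mathbf{E})\simeq H^1(M\oplus T)=0\)), it exhibits \(H^1(E_{n+1})\) as an image of a map between objects of a ball \(B_{t+2}\), places it in \(B_t\) via the wide-closure Lemma~\ref{L:BruningWideTheoremForMetrics}, and then uses triviality to force \(H^1(E_{n+1})=0\); only afterwards does a cohomological diagram chase (not a splitting) put the cones of the truncated maps into the balls, yielding that the \(\tr^{\leq0}\)-approximation is Cauchy with homotopy colimit \(M\), and \(T\) is then recovered from the split triangle \(M\to M\oplus T\to T\). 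Without an argument of this kind your proof does not go through; the final transfer of compact support to the summands is fine, but it does not repair the Cauchy step.
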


\begin{proof}
Assume \(M\oplus T\in\mathfrak{S}'(\mathcal{S})\).
By Corollary~\ref{C:UniformBoundedCohomologyCauchySequenceRestrictionGeneralised}, there exists a good Cauchy sequence \(\mathbf{E}=(E_n,e_n)_{n\in\N}\) such that for all \(n\in\N\) the object \(E_n\) has cohomology concentrated in non-negative degrees and \(\hoco \mathbf{E}\simeq M\oplus T\).
Suppose that \(M\oplus T\) is compactly supported at \(s\in\N\). Then by Corollary~\ref{C:CummingsGratzTrivialisationForSequences} we can even w.l.o.g.\ assume that \(\mathbf{E}\) is $s$-trivial.
Since
\[
\varinjlim H^1(\mathbf{E})\simeq H^1(\hoco \mathbf{E})\simeq H^1(M\oplus T)\simeq 0,
\] and all $R$-modules in the directed diagram \(H^1(\mathbf{E})\) are finitely generated, we obtain that for all \(n\in\N\) there exists \(m>n\) such that \(H^1(e_{n,m})=0\). This is why, we can (after restricting to a subsequence) w.l.o.g.\ assume that \(H^1(e_{n})=0\) for all \(n\in\N\).

As $R$ is of finite global dimension, it holds that \(\left(\mathcal{T}^{\leq0}\cap\mathcal{S},\mathcal{T}^{>0}\cap\mathcal{S}\right)\) is a~$t$\=/structure on \(\mathcal{S}\).
We consider two approximation sequences of \(\mathbf{E}\); the \(\mathcal{T}^{\leq 0}\)-approximation \(\mathbf{X}=(X_n,f_n)_{n\in\N}\) and the~\(\mathcal{T}^{> 0}\)\=/approximation \(\mathbf{Y}=(Y_n,g_n)_{n\in\N}\). For all \(n\in\N\) we have \(E_n\simeq X_n\oplus Y_n\) (because $R$ is hereditary), and the object \(X_n\) has cohomology concentrated in degree $0$.

Fix an index \(t\geq s\). Let \(N\in\N\) be such that for all \(n\geq N\) it holds that \(C_n:=\cone e_n\in B_{t+3}\). Pick \(n\geq N\).
The objects \(\Sigma^{-1}H^1(C_{n})\) and \(H^0(C_{n+1})\) are direct summands of \(C_n\) and \(C_{n+1}\), respectively, so they both lie in \(B_{t+3}\). Hence both the modules \(H^1(C_{n})\) and \(H^0(C_{n+1})\) lie in \(B_{t+2}\). 
Consider the distinguished triangles
\[
 E_n \xrightarrow{e_{n}} E_{n+1} \rightarrow C_n \rightarrow \Sigma E_n \text{\ \ \ and\ \ \ } E_{n+1} \xrightarrow{e_{n+1}} E_{n+2} \rightarrow C_{n+1} \rightarrow \Sigma E_{n+1}
.\]
The long exact sequences in cohomology 
\[
 H^1(E_n) \xrightarrow{0} H^1(E_{n+1}) \rightarrow H^1(C_n) \text{\ \ \ and\ \ \ }
 H^0(C_{n+1}) \rightarrow H^1(E_{n+1}) \xrightarrow{0} H^1(E_{n+2}) 
\]
express \(H^1(E_{n+1})\) as an image of an $R$-homomorphism \(H^0(C_{n+1})\rightarrow H^1(C_{n})\) of objects from \(B_{t+2}\). Lemma~\ref{L:BruningWideTheoremForMetrics} gives us \(H^1(E_{n+1})\in B_t\subseteq B_s\). As \(\mathbf{E}\) is assumed to be $s$-trivial, this forces \(H^1(E_{n+1})\simeq0\).

By~the~triangulated \(\ThreeThreeLemma\) (see~\cite[{Lemma~2.6}]{May01}), there exists a~diagram
\begin{center}
	\begin{tikzcd}
			X_{n+1} \arrow[r,""] \arrow[d,"f_{n+1}"] & E_{n+1} \arrow[d, "e_{n+1}"] \arrow[r] & Y_{n+1} \arrow[r, "0"] \arrow[d, "g_{n+1}"] & \Sigma X_{n+1}  \arrow[d]
			\\
			X_{n+2} \arrow[r,""] \arrow[d, ""] & E_{n+2}\arrow[r] \arrow[d,""] & Y_{n+2} \arrow[r, "0"] \arrow[d, ""] & \Sigma X_{n+2} \arrow[d]
            \\
            C^X_{n+1} \arrow[r, ""] \arrow[d, ""] & C_{n+1} \arrow[r] \arrow[d, ""] & C^Y_{n+1} \arrow[r] \arrow[d, ""] & \Sigma C^X_{n+1} \arrow[d, ""]
            \\
            \Sigma X_{n+1} \arrow[r,""] & \Sigma E_{n+1} \arrow[r] & \Sigma Y_{n+1} \arrow[r, "0"] & \Sigma^2 X_{n+1}
		\end{tikzcd}
\end{center}
where all rows and columns are distinguished triangles and all squares are commutative with the exception of the right bottom square which is anticommutative. Taking cohomologies and using \(H^0(\mathbf{X})=\mathbf{X}\) and \(H^0(\mathbf{Y})\simeq0\simeq H^{-1}(\mathbf{Y})\) gives us a~commutative diagram
\begin{center}
	\begin{tikzcd}
			0 \arrow[r] & H^{-1}(C^X_{n+1}) \arrow[r,""] \arrow[d,""] & H^{-1}(C_{n+1}) \arrow[d, ""] \arrow[r] & 0 \arrow[r] \arrow[d] & H^0(C^X_{n+1})  
			\\
			0 \arrow[r] & X_{n+1} \arrow[r,""] \arrow[d,"f_{n+1}"] & H^0(E_{n+1}) \arrow[d, "H^0(e_{n+1})"] \arrow[r] & 0 \arrow[r] \arrow[d] & 0  
			\\
			0 \arrow[r] & X_{n+2} \arrow[r,""] \arrow[d, ""] & H^0(E_{n+2}) \arrow[r] \arrow[d,""] & 0 \arrow[r, ""] \arrow[d, ""] & 0 
            \\
            0 \arrow[r] & H^0(C^X_{n+1}) \arrow[r, ""] \arrow[d, ""] & H^0(C_{n+1}) \arrow[r] \arrow[d, ""] & H^0(C^Y_{n+1}) \arrow[r] \arrow[d, ""] & 0 
            \\
            0 \arrow[r] & 0 \arrow[r,""] & H^1(E_{n+1}) \arrow[r] & H^1(Y_{n+1}) \arrow[r, ""] & 0
		\end{tikzcd}
\end{center}
of $R$-modules with exact rows and columns. Using exactness and \(H^1(E_{n+1})\simeq0\), we obtain \(H^{-1}(C^X_{n+1}) \simeq H^{-1}(C_{n+1})\in B_{t+1}\) and \(H^{0}(C^X_{n+1}) \simeq H^{0}(C_{n+1})\in B_{t}\).

In other words, we have \(C^X_{n+1}\simeq\Sigma H^{-1}(C^X_{n+1})\oplus H^{0}(C^X_{n+1})\in B_t\). Since $t$ and $n$ were arbitrary, we may conclude that \(\mathbf{X}\) is a Cauchy sequence. Because \[
\hoco \mathbf{X}\simeq \varinjlim H^0(\mathbf{E})\simeq H^0(\hoco \mathbf{E})\simeq H^0(M\oplus T)\simeq M
,\]
we get \(M\in\mathfrak{L}'(\mathcal{S})\). But $M$ is also compactly supported as a direct summand of an~element of the completion. Hence \(M\in\mathfrak{S}'(\mathcal{S})\).

Finally, consider the split distinguished triangle
\[
M \rightarrow M\oplus T \rightarrow T \rightarrow \Sigma M
\]
in \(\mathcal{T}\). Since \(M,M\oplus T\in\mathfrak{S}'(\mathcal{S})\) and \(\mathfrak{S}'(\mathcal{S})\) is a triangulated subcategory of~\(\mathcal{T}\), we obtain \(T\in\mathfrak{S}'(\mathcal{S})\).
\end{proof}

\begin{proposition}
\label{P:CompletionIsIdempotentComplete}
Assume $R$ is a hereditary ring, and equip \(\mathcal{S}\) with an~additive good metric. Then \(\mathfrak{S}'(\mathcal{S})\) is idempotent complete.
\end{proposition}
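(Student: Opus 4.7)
The plan is to establish the equivalent statement that $\mathfrak{S}'(\mathcal{S})$ is closed under direct summands inside the idempotent complete ambient category $\derived(R)$. I take $E\in\mathfrak{S}'(\mathcal{S})$ with a splitting $E\simeq A\oplus B$ in $\derived(R)$ and aim to show $A\in\mathfrak{S}'(\mathcal{S})$. By Proposition~\ref{P:BoundedHereditaryCompletion} both summands already lie in $\derived^b(\ModR)$, and the hereditary hypothesis makes every object of $\derived(R)$ formal, so $A\simeq\coprod_i\Sigma^{-i}H^i(A)$ is a finite coproduct and each $H^i(A)$ is a direct summand of $H^i(E)$ in $\ModR$. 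Since $\mathfrak{S}'(\mathcal{S})$ is a triangulated subcategory, closed under shifts and finite coproducts, the problem reduces to proving \textbf{(I)} that $\Sigma^{-i}H^i(E)\in\mathfrak{S}'(\mathcal{S})$ for every $i$, and \textbf{(II)} that any direct summand of a module $M\in\mathfrak{S}'(\mathcal{S})$ sitting in degree $0$ again lies in $\mathfrak{S}'(\mathcal{S})$.

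For \textbf{(I)} I would induct on the cohomological amplitude of $E$. Whenever two non-zero cohomologies of $E$ are separated by a cohomological gap of size $\geq 1$, a suitably shifted application of Lemma~\ref{L:CohomologiesSplitOff} peels the two blocks apart and reduces the amplitude. The obstinate case is that of cohomology supported in two adjacent degrees, where the lemma cannot be invoked verbatim. My intended remedy is to use Corollary~\ref{C:UniformBoundedCohomologyCauchySequenceRestrictionGeneralised} to realise $E$ as $\hoco\mathbf{E}$ for a Cauchy sequence $\mathbf{E}=(E_n,e_n)$ whose entries split as $E_n=P_n\oplus\Sigma^{-1}Q_n$, and to rerun the approximation-sequence $\ThreeThreeLemma$ chase from the proof of Lemma~\ref{L:CohomologiesSplitOff}, this time tracking both $H^0$ and $H^1$ of the cones $C_n=\cone e_n$, to verify that the truncations $(P_n,f_n)$ and $(\Sigma^{-1}Q_n,\Sigma^{-1}g_n)$ are themselves Cauchy with homotopy colimits $H^0(E)$ and $\Sigma^{-1}H^1(E)$. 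For \textbf{(II)}, given $M\simeq A'\oplus B'$ in $\ModR$, I would take a Cauchy sequence $\mathbf{E}$ with $\hoco\mathbf{E}\simeq M$ and all entries of cohomology in $\{0,1\}$, postcompose each colimit map $E_n\to M$ with the projection $M\twoheadrightarrow A'$, and use the factorisation principle of Lemma~\ref{L:BasicFactorisationProperty} together with Proposition~\ref{P:CummingsGratzTrivialisationArgument} to extract a trivialised subsequence whose module colimit represents $A'$.

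The principal obstacle is the consecutive-cohomology case of \textbf{(I)}: the long-exact-sequence chase of Lemma~\ref{L:CohomologiesSplitOff} uses the gap at degree $1$ in an essential way, and removing it forces a genuinely more delicate triangulated $\ThreeThreeLemma$ argument controlling simultaneously both the degree-$0$ and degree-$1$ cohomology of the cones. Part \textbf{(II)} is conceptually cleaner, though one must still verify carefully that the projection-composed sequence inherits the compactly-supported condition together with the Cauchy property after passage to a component.
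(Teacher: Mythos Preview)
Your reduction to steps \textbf{(I)} and \textbf{(II)} is exactly right and matches the paper's structure, but both of your proposed arguments have genuine gaps, and you are missing the single trick that the paper uses to dispatch them simultaneously.

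For \textbf{(I)}, your worry about the consecutive-cohomology case is well founded, and your remedy does not work. The $\ThreeThreeLemma$ chase in Lemma~\ref{L:CohomologiesSplitOff} uses the eventual vanishing $H^1(E_{n+1})=0$, which is available precisely because of the gap hypothesis $T\in\mathcal{T}^{>1}$. Without it, the row $C^X_n\to C_n\to C^Y_n$ only yields that $H^0(C^X_n)$ is a \emph{submodule} of $H^0(C_n)$ (concretely, $H^0(C^X_n)\simeq\coker H^0(e_n)\hookrightarrow H^0(C_n)$), and the complementary piece $H^0(C^Y_n)\simeq\ker(Q_n\to Q_{n+1})$ is a submodule of $Q_n$, which is not metrically controlled at all. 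Since the balls $B_t$ are only closed under extensions and direct summands, not under subobjects, you cannot conclude $H^0(C^X_n)\in B_t$, and the truncation sequence $\mathbf{X}$ is not shown to be Cauchy. Your sketch for \textbf{(II)} is also not a proof: postcomposing the colimit maps $E_n\to M$ with the projection $M\twoheadrightarrow A'$ produces a compatible family of maps into a fixed object, not a new sequence, and neither Lemma~\ref{L:BasicFactorisationProperty} nor Proposition~\ref{P:CummingsGratzTrivialisationArgument} turns this into a Cauchy sequence with colimit $A'$.

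The paper's key idea, which you are missing, is a purely triangulated manoeuvre that manufactures the needed gap. If $X\oplus Y\in\mathfrak{S}'(\mathcal{S})$, then the cone of the idempotent $\begin{bsmallmatrix}\id_X&0\\0&0\end{bsmallmatrix}$ on $X\oplus Y$ is $Y\oplus\Sigma Y$, and applying the same trick once more yields $Y\oplus\Sigma^3 Y\in\mathfrak{S}'(\mathcal{S})$. After a shift this is $M\oplus T$ with $M$ a module and $T\in\mathcal{T}^{>1}$, so Lemma~\ref{L:CohomologiesSplitOff} applies \emph{as stated} and gives $Y\in\mathfrak{S}'(\mathcal{S})$. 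This single observation handles both \textbf{(I)} (take $Y=\Sigma^{l}H^{-l}(C)$ to peel off the extreme cohomology, then induct) and \textbf{(II)} (take $Y=N$ a module summand of $M$) with no separate analysis of the adjacent-degree case and no need to build new Cauchy sequences for summands.
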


\begin{proof}
We start by observing that if two objects \(X,Y\) of the good extension \(\mathcal{T}\) satisfy \(X\oplus Y\in\mathfrak{S}'(\mathcal{S})\), then 
from the existence of the triangles
\[
X\oplus Y \xrightarrow{\begin{bsmallmatrix}\id_{X} & 0\\ 0 & 0\end{bsmallmatrix}} X\oplus Y \rightarrow Y\oplus \Sigma Y \rightarrow \Sigma X\oplus \Sigma Y
\]
and
\[
\Sigma Y\oplus \Sigma^2 Y \xrightarrow{\begin{bsmallmatrix}0 & 0\\ \id_{\Sigma Y} & 0\end{bsmallmatrix}} Y \oplus \Sigma Y \rightarrow Y \oplus \Sigma^3 Y \rightarrow \Sigma^2Y \oplus \Sigma^3 Y
\]
in \(\mathfrak{S}'(\mathcal{S})\) we can infer \(Y\oplus \Sigma Y^3\in\mathfrak{S}'(\mathcal{S})\).

Let \(C\in\mathfrak{S}'(\mathcal{S})\). Then $C$ is bounded in cohomology by Proposition~\ref{P:BoundedHereditaryCompletion}. Since $R$ is hereditary, this means \(C\simeq \coprod_{i=-l}^l \Sigma^{-i} H^i(C) \) for some \(l\in\N\). 
By the above observation, \(H^{-l}(C) \oplus \Sigma^{-3} H^{-l}(C)\in\mathfrak{S}'(\mathcal{S})\).
We get \(H^{-l}(C)\in\mathfrak{S}'(\mathcal{S})\) by Lemma~\ref{L:CohomologiesSplitOff}. An~induction argument on \(-l\leq i\leq l\) then proves that \(\mathfrak{S}'(\mathcal{S})\) is closed under taking cohomologies.

All we are left with to prove is that for modules \(M,N\in\ModR\) with \(M\in\mathfrak{S}'(\mathcal{S})\) and $N$ a direct summand of $M$, we have \(N\in\mathfrak{S}'(\mathcal{S})\). However, this follows from the same trick as before; the initial observation gives us \(N\oplus \Sigma^{-3}N\in\mathfrak{S}'(\mathcal{S})\), and Lemma~\ref{L:CohomologiesSplitOff} yields \(N\in\mathfrak{S}'(\mathcal{S})\).
We conclude that the completion \(\mathfrak{S}'(\mathcal{S})\) is idempotent complete.
\end{proof}

\begin{definition}
Let \(\mathcal{C}\) be a triangulated category. We say that \(\mathcal{C}\) is \textit{countably generated} if there exists an at most countable subset \(\mathcal{D}\subseteq\mathcal{T}\) such that \(\langle \mathcal{D}\rangle=\mathcal{T}\).
\end{definition}

We shall now show that (for $R$ hereditary) elements of a completion of \(\mathcal{S}\) also lie in a completion with respect to a specific countably generated constant metric. This is useful for reducing arguments about generic metrics to constant metrics.

\begin{lemma}
\label{L:LivingInCompletionGivenByThickSubcategory}
Assume $R$ is a hereditary ring. Equip \(\mathcal{S}\) with an~additive good metric \(\mathcal{M}=\{B_n\}_{n\in\N}\).
Let \(E\in\mathfrak{S}'_{\mathcal{M}}(\mathcal{S})\). Then for every \(t\in\N\) there exist \(s\geq t\) and~a~countably generated thick subcategory \(\mathcal{C}\) of~\(\mathcal{S}\) such that \(\mathcal{C}\subseteq\thick\big(H^0(B_s)\big)\) and \(E\in\mathfrak{S}'_{\mathcal{C}}(\mathcal{S})\).
\end{lemma}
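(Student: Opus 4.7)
The plan is to exhibit $\mathcal{C}$ explicitly as the thick subcategory generated by the (uniformly bounded) cohomologies of the cones of a carefully chosen representing Cauchy sequence for $E$. Since $E\in\mathfrak{S}'_{\mathcal{M}}(\mathcal{S})$, it is compactly supported at some $r\in\N$ with respect to $\mathcal{M}$, and by Proposition~\ref{P:BoundedHereditaryCompletion} it has cohomology concentrated in some bounded range $[N,M]\subseteq\Z$. Applying Corollary~\ref{C:UniformBoundedCohomologyCauchySequenceRestrictionGeneralised}, I replace the original representing Cauchy sequence of $E$ by a Cauchy sequence $\mathbf{F}=(F_n,f_n)_{n\in\N}$ with $\hoco\mathbf{F}\simeq E$ and with every $F_n$ having cohomology concentrated in degrees $[N,M+1]$. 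The long exact sequence in cohomology then forces every cone $\cone f_n$ to have cohomology concentrated in the uniformly bounded range $[N-1,M+1]$.

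Next, I choose $s\geq\max(t,r)$ and an integer $k\geq\max(|N-1|,|M+1|)$. After passing to a subsequence (which does not change the homotopy colimit), I may assume $\cone f_n\in B_{s+k}$ for every $n\in\N$. Because $R$ is hereditary, $\cone f_n\simeq\bigoplus_{i}\Sigma^{-i}H^i(\cone f_n)$, and additivity of $\mathcal{M}$ places every summand $\Sigma^{-i}H^i(\cone f_n)$ in $B_{s+k}$. Iterating the containment $\Sigma^{\pm 1}B_{j+1}\subseteq B_j$ yields $H^i(\cone f_n)\in\Sigma^{i}B_{s+k}\subseteq B_{s+k-|i|}\subseteq B_s$ for every $i$ in the range $[N-1,M+1]$. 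Since $H^i(\cone f_n)$ is itself an $R$-module (concentrated in degree~$0$ inside $\mathcal{S}$), we have $H^i(\cone f_n)\in H^0(B_s)$.

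Setting $\mathcal{C}:=\thick\{H^i(\cone f_n):n\in\N,\ N-1\leq i\leq M+1\}$, the subcategory $\mathcal{C}$ is countably generated and contained in $\thick(H^0(B_s))$ by construction. Every cone $\cone f_n$ is a finite direct sum of shifts of generators of $\mathcal{C}$, hence lies in $\mathcal{C}$; so $\mathbf{F}$ stabilises at $\mathcal{C}$, is Cauchy with respect to the constant metric $\mathcal{C}$, and therefore $E\simeq\hoco\mathbf{F}\in\mathfrak{L}'_{\mathcal{C}}(\mathcal{S})$. For the compact-support side, each generator lies in $B_s\subseteq B_r$, so the compact-support property $E\in B_r^{\perp}$ yields $\Hom_{\mathcal{T}}(H^i(\cone f_n),E)=0$; since $\{X\in\mathcal{T}:\Hom_{\mathcal{T}}(X,E)=0\}$ is thick, this extends to $E\in\mathcal{C}^{\perp}=\mathfrak{C}'_{\mathcal{C}}(\mathcal{S})$. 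Combining, $E\in\mathfrak{S}'_{\mathcal{C}}(\mathcal{S})$, as required.

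The main obstacle is arranging a \emph{uniform} cohomological bound on the cones $\cone f_n$, which is precisely what Corollary~\ref{C:UniformBoundedCohomologyCauchySequenceRestrictionGeneralised} delivers: without uniformity, no single shift parameter $k$ could simultaneously push every cohomology of every cone into $B_s$, and the generators would fail to land in $H^0(B_s)$. Heredity of $R$ is also essential, both for the splitting $\cone f_n\simeq\bigoplus_i\Sigma^{-i}H^i(\cone f_n)$ that identifies the cohomologies as actual direct summands inside $B_{s+k}$, and (via Proposition~\ref{P:BoundedHereditaryCompletion}) for the a~priori bounded cohomology of $E$ that kicks off the argument.
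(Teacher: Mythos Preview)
Your overall strategy matches the paper's, but the compact-support step contains a genuine gap. You assert that $\{X\in\mathcal{T}:\Hom_{\mathcal{T}}(X,E)=0\}={}^{\perp}E$ is thick; it is not. This subcategory is closed under extensions and direct summands, but it is \emph{not} closed under the shift functor: $\Hom_{\mathcal{T}}(X,E)=0$ does not imply $\Hom_{\mathcal{T}}(\Sigma^{-1}X,E)=\Hom_{\mathcal{T}}(X,\Sigma E)=0$. Since $\mathcal{C}$ is the \emph{thick} closure of your generators, it contains all their shifts, and to conclude $\mathcal{C}\subseteq{}^{\perp}E$ you must verify $\Hom_{\mathcal{T}}\big(\Sigma^{j}H^{i}(\cone f_n),E\big)=0$ for every $j\in\Z$, not just $j=0$.

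The paper closes this gap by a two-range argument. With cohomology of $E$ concentrated in $[-l,l-1]$ and entries of the sequence in $[-l,l]$, it arranges $C_n\in B_{s+2l+2}$, so that $H^{i}(C_n)\in B_{s+l+1}$ and hence $\Sigma^{j}H^{i}(C_n)\in B_s\subseteq{}^{\perp}E$ for all $-l-1\leq j\leq l$. For $j$ outside this window, $\Hom_{\mathcal{T}}\big(\Sigma^{j}H^{i}(C_n),E\big)$ vanishes automatically because $R$ is hereditary and $E$ has cohomology only in $[-l,l-1]$ (so there are no nonzero maps from a module placed in degree $-j$ to $E$). You can repair your argument the same way: enlarge the shift parameter so that $\Sigma^{j}H^{i}(\cone f_n)\in B_r$ for every $j$ in the cohomological range $[N-1,M+1]$ of $E$ (e.g.\ take cones in $B_{s+2k}$ rather than $B_{s+k}$), and invoke heredity plus the bound on $H^{\bullet}(E)$ for the remaining $j$.
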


\begin{proof}
By Proposition~\ref{P:BoundedHereditaryCompletion}, there exists \(l\in\N\) such that $E$ has cohomology concentrated in degrees \(-l,\ldots,l-1\). By Corollary~\ref{C:UniformBoundedCohomologyCauchySequenceRestrictionGeneralised}, we can find a Cauchy (with respect to \(\mathcal{M}\)) sequence  \(\mathbf{E}=(E_n,e_n)_{n\in\N}\) with \(E\simeq\hoco\mathbf{E}\) such that for all \(n\in\N\) the~entry \(E_n\) has cohomology concentrated in degrees \(-l,\ldots,l\).
Denote \(C_n:=\cone e_n\) for all \(n\in\N\).

Fix \(t\in\N\).
Let \(s\geq t\) be such that \(\mathbf{E}\) is compactly supported at \(s\). Let \(N\in\N\) be such that for all \(n\geq N\) we have \(C_n\in B_{s+2l+2}\). Define
\(
\mathcal{C}:=\thick\left(C_n: N \leq n\right)
.\)
Then \(\mathbf{E}\) is Cauchy with respect to \(\mathcal{C}\) and \(E\in\mathfrak{L}'_{\mathcal{C}}(\mathcal{S})\).

Since \(C_n=\coprod_{i=-l-1}^l\Sigma^{-i}H^i(C_n)\) for all \(n\geq N\), the category \(\mathcal{C}\) is also generated (as a thick subcategory) by
\[
\left\{H^i(C_n):-l-1\leq i\leq l,N\leq n\right\}\subseteq B_{s+l+1}\subseteq B_s
,\]
so \(\mathcal{C}\subseteq\thick\big(H^0(B_s)\big)\).

It remains to be shown that \(E\) is compactly supported at \(\mathcal{C}\). Pick indices \(n\geq N\), \(-l-1\leq i\leq l\) and \(j\in\Z\).
If \(-l-1\leq j\leq l\), then \(\Sigma^jH^i(C_n)\in\Sigma^jB_{s+l+1}\subseteq B_s\subseteq{}^{\perp}E\), so \(\Hom_{\mathcal{T}}\big(\Sigma^jH^i(C_n),E\big)=0\).
If \(j<-l-1\) or \(j>l\), then it once again holds that \(\Hom_{\mathcal{T}}\big(\Sigma^jH^i(C_n),E\big)=0\) because $E$ has cohomology concentrated in degrees \(-l,\ldots,l-1\) and $R$ is hereditary.

Therefore, all the shifts of the set \(
\{H^i(C_n):-l-1\leq i\leq l,N\leq n\}
\) of generators of \(\mathcal{C}\) lie in \({}^{\perp} E\). Because \({}^{\perp} E\) is closed under extensions,
we conclude that \(\mathbf{E}\) is compactly supported at \(\mathcal{C}\) and \(E\in\mathfrak{S}'_{\mathcal{C}}(\mathcal{S})\).
\end{proof}

In the next lemma, we employ Quillen's small object argument to build special Cauchy sequences with respect to constant metrics starting at a prescribed object by repeatedly taking suitable approximations.

\begin{lemma}
\label{L:SmallObjectArgumentCauchySequences}
Let \(\mathcal{C}\) be a countably generated triangulated subcategory of \(\mathcal{S}\). Then for every object \(X\in\mathcal{S}\) there exists a Cauchy sequence \(\mathbf{E}=(E_n,e_n)_{n\in\N}\) with respect to \(\mathcal{C}\) such that \(X\simeq E_1\), \(\hoco \mathbf{E}\in\mathfrak{S}'_{\mathcal{C}}(\mathcal{S})\), and \(\cone e_n\in\mathcal{C}\) for all \(n\in N\). 
\end{lemma}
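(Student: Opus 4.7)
The plan is to perform Quillen's small object argument in the triangulated setting: build $\mathbf{E}$ inductively, coning off at each stage a universal finite family of morphisms from a chosen generator of $\mathcal{C}$ into the current term. Countable generation lets us enumerate what has to be killed, while the noetherian property of $\Hom$-groups from Lemma~\ref{L:PickingMaximalSummands} keeps every step inside $\mathcal{S}$ with a cone in $\mathcal{C}$.

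Concretely, I would first fix a countable family $\{G_k\}_{k\in\N}$ with $\langle G_k : k\in\N\rangle = \mathcal{C}$, together with a bookkeeping bijection $\beta\colon\N\to\N$ whose fibres are all infinite, so that each index $k$ is revisited infinitely often. Then I would set $E_1:=X$ and inductively define $E_{n+1}$ as follows: by Lemma~\ref{L:PickingMaximalSummands} the $\kk$-module $\Hom_{\mathcal{S}}(G_{\beta(n)},E_n)$ is noetherian, so pick finitely many $\kk$-generators $f_1^{(n)},\ldots,f_{m_n}^{(n)}$, assemble them into a morphism $f^{(n)}\colon G_{\beta(n)}^{\oplus m_n}\to E_n$, and complete it to a distinguished triangle
\[
G_{\beta(n)}^{\oplus m_n}\xrightarrow{f^{(n)}} E_n\xrightarrow{e_n} E_{n+1}\to \Sigma G_{\beta(n)}^{\oplus m_n}
\]
inside $\mathcal{S}$. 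Then $\cone(e_n)\simeq\Sigma G_{\beta(n)}^{\oplus m_n}\in\mathcal{C}$, so $\mathbf{E}$ is automatically Cauchy with respect to the constant metric $\mathcal{C}$, and $E_1=X$ holds by construction.

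It remains to check that $E:=\hoco\mathbf{E}$ is compactly supported at $\mathcal{C}$. Since $\Hom_{\mathcal{T}}(-,E)$ is cohomological, the subcategory ${}^\perp E\subseteq\mathcal{T}$ is thick, so it suffices to verify $G_k\in{}^\perp E$ for every $k$. Given $g\colon G_k\to E$, compactness of $G_k\in\mathcal{S}=\mathcal{T}^c$ in $\mathcal{T}$ yields an $N$ and a morphism $h\colon G_k\to E_N$ with $g=\varphi_N\circ h$ through the colimit injection. Picking $n\geq N$ with $\beta(n)=k$ and setting $h':=e_{N,n}\circ h\in\Hom_{\mathcal{S}}(G_k,E_n)$, the element $h'$ is a $\kk$-linear combination of the $f_j^{(n)}$; since $e_n\circ f_j^{(n)}=0$ by the defining triangle, $\kk$-linearity of post-composition forces $e_n\circ h'=0$, and hence $g=\varphi_{n+1}\circ e_n\circ h'=0$.

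The genuinely delicate point is staying inside $\mathcal{S}$ while simultaneously annihilating the potentially infinite space $\Hom_{\mathcal{S}}(G_{\beta(n)},E_n)$ at each stage; this is precisely what Lemma~\ref{L:PickingMaximalSummands} resolves, since the noetherian hypothesis on $\kk$ realises a universal killer as a single map out of a finite coproduct of generators. The remainder is standard bookkeeping combined with the \emph{compactness plus factor-through-a-finite-stage} trick, which transports the pointwise vanishing $e_n\circ h'=0$ into vanishing in the colimit.
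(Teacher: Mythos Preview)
Your overall strategy matches the paper's proof almost exactly: both run Quillen's small object argument, using Lemma~\ref{L:PickingMaximalSummands} to extract a finite generating set of each $\Hom$-space and coning it off. There is, however, a genuine gap in your compact-support step.

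You assert that ${}^\perp E$ is thick because $\Hom_{\mathcal{T}}(-,E)$ is cohomological. This is not correct: ${}^\perp E$ is closed under extensions and direct summands, but it is \emph{not} closed under shifts (knowing $\Hom(X,E)=0$ says nothing about $\Hom(\Sigma X,E)\simeq\Hom(X,\Sigma^{-1}E)$). Consequently, verifying $G_k\in{}^\perp E$ for all $k$ does not force $\mathcal{C}=\langle G_k:k\in\N\rangle\subseteq{}^\perp E$, since building $\mathcal{C}$ from the $G_k$ requires taking shifts as well as extensions. Your construction only cones off maps from the $G_{\beta(n)}$ themselves, never from $\Sigma^l G_k$ for $l\neq 0$, so there is no reason for $E$ to lie in $\mathfrak{C}'_{\mathcal{C}}(\mathcal{S})$.

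The fix is exactly what the paper does: enumerate not just the generators but all of their shifts. Concretely, replace your family $\{G_k\}$ by an enumeration $(C_n)_{n\in\N}$ of the countable set $\{\Sigma^l G_k : k\in\N,\ l\in\Z\}$ in which each object appears infinitely often, and run your inductive construction unchanged. Your factoring argument then applies verbatim to every $\Sigma^l G_k$, and since ${}^\perp E$ is extension-closed and now contains a shift-closed generating set, it contains $\mathcal{C}$. (A separate minor slip: a map $\beta\colon\N\to\N$ with infinite fibres cannot be a \emph{bijection}; you mean a surjection.)
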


\begin{proof}
Let \(\mathcal{C'}\) be an at most countable generating subset of \(\mathcal{C}\). Using a bijection \(\N\times\N\simeq\N\), we can find an enumeration \((C_n)_{n\in\N}\) of \(\mathcal{C'}\) such that for all \(C\in\mathcal{C}'\) and \(l\in\Z\) there exists an infinite subset \(I\subseteq\N\) satisfying \(\Sigma^lC\simeq C_i\) for each \(i\in I\).

Let \(X\in\mathcal{C}\). We construct the sequence \(\mathbf{E}=(E_n,e_n)_{n\in\N}\) by induction on \(n\). We start by defining \(E_1:=X\).
Assume that \(E_n\) is already constructed for some \(n\in\N\). Since \(\mathcal{S}\simeq\HomotopyCategory^b(\proj\dashmodule R)\), the $\kk$-module \(\Hom_{\mathcal{S}}(C_n,E_n)\) is generated by a finite subset \(J_n\) due to Lemma~\ref{L:PickingMaximalSummands}.
We then define \(e_n\) and \(E_{n+1}\) by the distinguished triangle
\[
\coprod_{j\in J_n}C_n \xrightarrow{\varphi_n} E_n \xrightarrow{e_{n}} E_{n+1} \rightarrow \coprod_{j\in J_n}\Sigma C_n
\]
where \(\varphi_n=[j]_{j\in J_n}\) is the natural coproduct map. This construction makes \(\varphi_n\) into a \(\coprodf(C_n)\)-precover. 
With this definition \(\cone e_n\in\mathcal{C}\) for all \(n\in\N\), so \(\mathbf{E}\) is indeed Cauchy.

Denote \(E:=\hoco\mathbf{E}\). We need to prove that $E$ is compactly supported with respect to \(\mathcal{C}\). Fix \(C\in\mathcal{C}'\) and \(l\in\Z\).
There are infinitely many indices \(m\in\N\) satisfying \(\Sigma^lC\simeq C_m\), and for any such an $m$ applying \(\Hom_{\mathcal{S}}(C_m,\blank)\) to the~distinguished triangle defining \(E_{m+1}\) provides us with an~exact sequence
\[
\Hom_{\mathcal{S}}\left(C_m,\coprod_{j\in J_m}C_m\right) \xrightarrow{\varphi_m\circ\blank} \Hom_{\mathcal{S}}(C_m,E_m) \xrightarrow{e_m\circ\blank} \Hom_{\mathcal{S}}(C_m,E_{m+1}) 
\]
where \(\varphi_m\circ\blank\) is clearly an epimorphism, so \(e_m\circ\blank=\Hom_{\mathcal{S}}(C_m,e_m)=0\) by the~exactness.
We infer \(0\simeq\varinjlim\Hom_{\mathcal{T}}\left(\Sigma^lC,\mathbf{E}\right)\simeq\Hom_{\mathcal{T}}(\Sigma^lC,E)\).

Consequently, as all the shifts of \(\mathcal{C}'\) lie in \({}^{\perp} E\), which is extension-closed, so does \(\mathcal{C}=\langle\mathcal{C}'\rangle\). Hence \(\mathbf{E}\) is compactly supported at \(\mathcal{C}\) and \(E\in\mathfrak{S}'_{\mathcal{C}}(\mathcal{S})\).
\end{proof}

\begin{remark}
Quillen's small object argument, where we construct precovers by taking coproducts over ``all possible morphisms'', has many applications in algebra and topology. In the context of triangulated categories, we can mention e.g.\ the~construction of compactly generated $t$-structures by Alonso, Jeremías, and Souto in \cite[Theorem~A.1]{AlonsoJeremiasJose03}.  
\end{remark}

For hereditary \(\kk\)-algebras, we can use Corollary~\ref{C:CompletionUniversalLocalisation} to express completions with respect to certain constant metrics as precisely bounded derived categories of corresponding universal localisations.

\begin{proposition}
\label{P:CountablyGeneratedCompletion}
Assume $R$ is a hereditary ring. Let \(\mathcal{C}\) be a countably generated thick subcategory of \(\mathcal{S}\).
Then \(\mathfrak{S}'_{\mathcal{C}}(\mathcal{S})\simeq\derived^b(\modf\dashmodule R_{\mathcal{C}})\).
\end{proposition}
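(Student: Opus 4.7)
The plan is to combine the structural description of the completion provided by Proposition~\ref{P:HomologicalEpimorphismDeterminingCompletion} with the small\-/object construction of Lemma~\ref{L:SmallObjectArgumentCauchySequences}.

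Since $\mathcal{C}$ is thick in $\mathcal{S}$, the constant metric $\{\mathcal{C}\}_{n\in\N}$ is additive, so Corollary~\ref{C:CompletionUniversalLocalisation} (together with the remark afterwards that $R_\mathcal{C}$ is again hereditary, so $\perfect(R_\mathcal{C})\simeq\derived^b(\modf\dashmodule R_\mathcal{C})$) already yields the inclusion $\mathfrak{S}'_{\mathcal{C}}(\mathcal{S})\subseteq\derived^b(\modf\dashmodule R_\mathcal{C})$. By the second half of Proposition~\ref{P:HomologicalEpimorphismDeterminingCompletion}, this inclusion becomes an equality as soon as (i) $R_\mathcal{C}\in\mathfrak{S}'_{\mathcal{C}}(\mathcal{S})$ and (ii) $\mathfrak{S}'_{\mathcal{C}}(\mathcal{S})$ is idempotent complete. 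Condition (ii) is immediate from Proposition~\ref{P:CompletionIsIdempotentComplete}, applied to the hereditary ring $R$ with the additive constant metric.

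The core task is therefore (i). To produce a Cauchy sequence realising $R_\mathcal{C}$, I would apply Lemma~\ref{L:SmallObjectArgumentCauchySequences} with the input object $X:=R$, obtaining a Cauchy sequence $\mathbf{E}=(E_n,e_n)_{n\in\N}$ in $\mathcal{S}$ with $E_1=R$, with $\cone e_n\in\mathcal{C}$ for every $n\in\N$, and with $E:=\hoco\mathbf{E}\in\mathfrak{S}'_{\mathcal{C}}(\mathcal{S})$. To identify $E$ with $R_\mathcal{C}$ I would then run the calculation from the second half of the proof of Proposition~\ref{P:HomologicalEpimorphismDeterminingCompletion}: since $\cone e_n\in\mathcal{C}\subseteq\Loc(\mathcal{C})=\Ker(\blank\otimes_R^{\mathbf{L}}R_\mathcal{C})$, every morphism $e_n\otimes_R^{\mathbf{L}}R_\mathcal{C}$ is an isomorphism, and consequently
\[
E\otimes_R^{\mathbf{L}}R_\mathcal{C}\ \simeq\ \hoco\bigl(\mathbf{E}\otimes_R^{\mathbf{L}}R_\mathcal{C}\bigr)\ \simeq\ E_1\otimes_R^{\mathbf{L}}R_\mathcal{C}\ =\ R\otimes_R^{\mathbf{L}}R_\mathcal{C}\ \simeq\ R_\mathcal{C}.
\]
On the other hand, $E\in\mathfrak{S}'_{\mathcal{C}}(\mathcal{S})\subseteq\mathfrak{C}'_{\mathcal{C}}(\mathcal{S})=\Loc(\mathcal{C})^{\perp}=\derived(R_\mathcal{C})$ via the recollement of Proposition~\ref{P:HomologicalEpimorphismDeterminingCompletion}, so the unit of the adjunction $\blank\otimes_R^{\mathbf{L}}R_\mathcal{C}\dashv i_*$ is an isomorphism on $E$. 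Combining the two displays gives $E\simeq R_\mathcal{C}$, proving (i).

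I do not anticipate any serious conceptual obstacle: every ingredient is prepared, and the argument is essentially a bookkeeping chase through the recollement. The one point demanding mild care is checking that Lemma~\ref{L:SmallObjectArgumentCauchySequences} applies under our thick\-/countable\-/generation hypothesis on $\mathcal{C}$; this is automatic because the proof of that lemma only uses that the countable generating set sits inside ${}^{\perp}E$, which is closed under shifts, extensions, \emph{and} direct summands, so its thick closure $\mathcal{C}$ also sits in ${}^{\perp}E$.
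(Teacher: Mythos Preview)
Your argument is correct and essentially identical to the paper's: apply Lemma~\ref{L:SmallObjectArgumentCauchySequences} to $X=R$, tensor the resulting Cauchy sequence by $R_{\mathcal{C}}$ to identify its homotopy colimit with $R_{\mathcal{C}}$, and then invoke Proposition~\ref{P:CompletionIsIdempotentComplete} together with the last part of Proposition~\ref{P:HomologicalEpimorphismDeterminingCompletion}. Your closing remark about the thick-versus-triangulated generation hypothesis in Lemma~\ref{L:SmallObjectArgumentCauchySequences} is a nice extra care that the paper leaves implicit.
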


\begin{proof}
We employ Lemma~\ref{L:SmallObjectArgumentCauchySequences} to the object \(X:=R\), and we obtain a Cauchy sequence \(\mathbf{E}=(E_n,e_n)_{n\in\N}\) in \(\mathcal{S}\) with respect to \(\mathcal{C}\) such that
\[
E:=\hoco \mathbf{E}\in\mathfrak{S}'_{\mathcal{C}}(\mathcal{S})
\text{,\ \ }
R\simeq E_1
\text{,\ \ and\ \ }
\cone e_n\in\mathcal{C} \text{ for all } n\in \N
.\]
Since \(E\in\derived(R_{\mathcal{C}})=\mathfrak{C}'_{\mathcal{C}}(\mathcal{S})\subseteq\mathcal{T}\) under the identification from Corollary~\ref{C:CompletionUniversalLocalisation}, it holds that \(E\simeq E\otimes^{\mathbf{L}}_{R}R_{\mathcal{C}}\).
After applying \(\blank\otimes^{\mathbf{L}}_{R}R_{\mathcal{C}}\) to the sequence \(\mathbf{E}\), we get a sequence \(\mathbf{E}\otimes^{\mathbf{L}}_{R}R_{\mathcal{C}}\) in the~category \(\derived(R_{\mathcal{C}})\). Because \(\cone e_n\in\mathcal{C}\subseteq\Ker\left(\blank\otimes^{\mathbf{L}}_{R}R_{\mathcal{C}}\right)\) for all \(n\in\N\), the sequence \(\mathbf{E}\otimes^{\mathbf{L}}_{R}R_{\mathcal{C}}\) consists of isomorphisms only, so
\[R_{\mathcal{C}}\simeq R\otimes^{\mathbf{L}}_{R}R_{\mathcal{C}}\simeq E_1\otimes^{\mathbf{L}}_{R}R_{\mathcal{C}}\simeq\hoco \mathbf{E}\otimes^{\mathbf{L}}_{R}R_{\mathcal{C}}\simeq E\otimes^{\mathbf{L}}_{R}R_{\mathcal{C}}\simeq E
.\]
As \(\mathfrak{S}'_{\mathcal{C}}(\mathcal{S})\) is idempotent complete by \ref{P:CompletionIsIdempotentComplete} and \(R_{\mathcal{C}}\in\mathfrak{S}'_{\mathcal{C}}(\mathcal{S})\), we get the desired equality \(\mathfrak{S}'_{\mathcal{C}}(\mathcal{S})=\derived^b(\modf\dashmodule R_{\mathcal{C}})\) by Proposition~\ref{P:HomologicalEpimorphismDeterminingCompletion}.
\end{proof}

\section{Lattice theory of metrics}
\label{sec:lattices}

This section is dedicated to developing the lattice theory of good metrics on a~triangulated category. We also present its application to the representation theory of algebras of finite representation type in Proposition~\ref{P:DynkinDecomposition} and hereditary rings in Proposition~\ref{P:DecompostionMetricsCompletion}.

Through this whole section \(\mathcal{T}\) will denote an essentially small triangulated category. Denote also \(\sim\) the equivalence of good metrics and \(\left[\mathcal{M}\right]\) the block of equivalence of a good metric \(\mathcal{M}\) on \(\mathcal{T}\) under \(\sim\).

\begin{notation}
Let be a triangulated category \(\mathcal{T}\) and \(\mathcal{A}\subseteq\mathcal{T}\) a class. We define \textit{extension closure} of \(\mathcal{A}\) as the smallest full subcategory of \(\mathcal{T}\) containing \(\mathcal{A}\) and $0$ which is closed under extensions. We denote it as \(\overline{\mathcal{A}}\).   
\end{notation}

\begin{proposition}
\label{P:LatticeExists}
Let \(\mathscr{M}\) be the class of all good metrics on \(\mathcal{T}\). Then \(\mathscr{M}/\sim\) forms a~lattice.

For good metrics \(\mathcal{A}=\{A_n\}_{n\in\N}\) and \(\mathcal{C}=\{C_n\}_{n\in\N}\) on \(\mathcal{T}\) the meet is given by \(\left[\mathcal{A}\right]\land\left[\mathcal{C}\right]:=\left[\{A_n\cap C_n\}_{n\in\N}\right]\) and the join is given by~\(\left[\mathcal{A}\right]\lor\left[\mathcal{C}\right]:=\left[\left\{\overline{A_n\cup C_n}\right\}_{n\in\N}\right]\).    
\end{proposition}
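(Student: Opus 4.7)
The proof is essentially a sequence of routine verifications organised in three stages: first check that the proposed meet and join are themselves good metrics, next check that the operations descend to $\mathscr{M}/{\sim}$, and finally verify the universal properties of greatest lower bound and least upper bound.

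\textbf{Stage 1 (the meet is a good metric).} For $\mathcal{A} = \{A_n\}$ and $\mathcal{C} = \{C_n\}$, set $D_n := A_n \cap C_n$. Monotonicity $D_{n+1} \subseteq D_n$ is immediate, $0 \in D_n$ is clear, and extension closure of $D_n$ follows because if $X,Z \in D_n$ sit in a triangle with $Y$, then $Y$ belongs to each of $A_n, C_n$ separately by their extension closure. The shift condition follows since $\Sigma^{\pm 1} D_{n+1} = \Sigma^{\pm 1}(A_{n+1} \cap C_{n+1}) \subseteq \Sigma^{\pm 1} A_{n+1} \cap \Sigma^{\pm 1} C_{n+1} \subseteq A_n \cap C_n = D_n$.

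\textbf{Stage 2 (the join is a good metric).} For $E_n := \overline{A_n \cup C_n}$, we immediately have $0 \in E_n$ and $E_n$ extension-closed by construction. Monotonicity holds because $A_{n+1} \cup C_{n+1} \subseteq A_n \cup C_n \subseteq E_n$, and $E_n$ being extension-closed gives $E_{n+1} \subseteq E_n$. The one slightly less automatic check is the shift condition; I will use that $\Sigma$ is an auto-equivalence preserving triangles, so it commutes with extension closure: $\Sigma(\overline{\mathcal{X}}) = \overline{\Sigma \mathcal{X}}$ (and likewise $\Sigma^{-1}$). Hence
\[
\Sigma^{\pm 1} E_{n+1} = \overline{\Sigma^{\pm 1}(A_{n+1} \cup C_{n+1})} = \overline{\Sigma^{\pm 1} A_{n+1} \cup \Sigma^{\pm 1} C_{n+1}} \subseteq \overline{A_n \cup C_n} = E_n.
\]
This identity for $\Sigma$ and extension closure is the only step I expect to need even a moment of care; everything else is direct.

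\textbf{Stage 3 (well-definedness and universal property).} To see the operations respect $\sim$, assume $\mathcal{A} \sim \mathcal{A}'$ and $\mathcal{C} \sim \mathcal{C}'$. For the meet, given $n$, pick $m_1, m_2$ with $A'_{m_1} \subseteq A_n$ and $C'_{m_2} \subseteq C_n$; then $m := \max(m_1, m_2)$ yields $A'_m \cap C'_m \subseteq A_n \cap C_n$ by the monotonicity of both chains. The symmetric inequality and the analogous argument for the join (combined with the fact that extension closure is monotone under inclusion) give equivalence of the results. For the lattice axioms: the inclusion $A_n \cap C_n \subseteq A_n$ shows $[\{A_n \cap C_n\}] \leq [\mathcal{A}]$, and similarly $\leq [\mathcal{C}]$; conversely, if $\mathcal{D} = \{D_n\}$ is finer than both, choose $m$ large enough that $D_m$ sits inside both $A_n$ and $C_n$, giving $D_m \subseteq A_n \cap C_n$, so $[\mathcal{D}]$ is finer than the meet. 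The dual argument for the join uses that an upper bound $\mathcal{D}$ has each $D_n$ extension-closed, so once $A_m \cup C_m \subseteq D_n$ we automatically get $\overline{A_m \cup C_m} \subseteq D_n$.

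\textbf{Main obstacle.} There is no serious obstacle; the content is bookkeeping. The only point that is not pure set-theoretic intersection/union is checking the shift axiom for the join, which reduces to the fact that extension closure commutes with the auto-equivalences $\Sigma^{\pm 1}$. Once that lemma-like observation is recorded, the three stages fit together into a short, clean proof.
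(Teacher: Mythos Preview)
Your proposal is correct and follows essentially the same approach as the paper: both proofs are routine verifications that the candidate meet and join are good metrics, that the operations are well-defined on equivalence classes, and that they satisfy the universal properties. Your write-up is in fact more explicit where the paper simply asserts that the metric axioms are ``obvious'' or ``easy to see,'' and your well-definedness argument via $m=\max(m_1,m_2)$ is a minor streamlining of the paper's detour through the triple intersection $A_n\cap B_n\cap C_n$.
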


\begin{proof}
Let \(\mathcal{A}=\{A_n\}_{n\in\N},\) \(\mathcal{C}=\{C_n\}_{n\in\N}\in\mathscr{M}\). It is obvious that \(\{A_n\cap C_n\}_{n\in\N}\) is a~good metric.
We must check that \(\left[\{A_n\cap C_n\}_{n\in\N}\right]\) does not depend on the choice of the~representative elements for \(\left[\mathcal{A}\right]\) and \(\left[\mathcal{C}\right]\).

Suppose that \(\mathcal{A}\sim \mathcal{B}\) and \(\mathcal{C}\sim\mathcal{D}\) for some other good metrics \(\mathcal{B}=\{B_n\}_{n\in\N}\) and \(\mathcal{D}=\{D_n\}_{n\in\N}\) on \(\mathcal{T}\).
We want to prove \(\{A_n\cap C_n\}_{n\in\N}\sim\{B_n\cap D_n\}_{n\in\N}\).
Due to the~symmetry of~\(\cap\) and transitivity of \(\sim\), we may w.l.o.g.\ assume \(\mathcal{C}=\mathcal{D}\). Clearly, we have
\[
\{A_n\cap B_n \cap C_n\}_{n\in\N}\leq\{A_n\cap C_n\}_{n\in\N} \text{\ \ \ and\ \ \ } \{A_n\cap B_n \cap C_n\}_{n\in\N}\leq\{B_n\cap C_n\}_{n\in\N}
.\]
Since \(\mathcal{A}\sim\mathcal{B}\), for every \(n\in\N\) we can find \(m\geq n\) such that \(A_m\subseteq B_n\). Then
\[
A_m\subseteq A_n\cap B_n
\text{\ \ \ \ \ and\ \ \ \ \ }
A_m\cap C_m\subseteq A_n\cap B_n\cap C_m \subseteq A_n\cap B_n\cap C_n
.\]
Hence \(\{A_n\cap B_n \cap C_n\}_{n\in\N}\sim\{A_n\cap C_n\}_{n\in\N}\).
Similarly, with the role of \(\mathcal{A}\) and \(\mathcal{B}\) swapped, we obtain \(\{A_n\cap B_n \cap C_n\}_{n\in\N}\sim\{B_n\cap C_n\}_{n\in\N}\).

Consequently, 
\(\{A_n\cap C_n\}_{n\in\N}\sim\{B_n\cap C_n\}_{n\in\N}\), so \(\left[\{A_n\cap C_n\}_{n\in\N}\right]\) is choice-free.
Furthermore, it is clear that any metric finer than both \(\mathcal{A}\) and \(\mathcal{C}\) is finer than \(\{A_n\cap C_n\}_{n\in\N}\) as well.
Hence the meet \(\left[\mathcal{A}\right]\land\left[\mathcal{C}\right]\) exists and is equal to the desired equivalence class \(\left[\{A_n\cap C_n\}_{n\in\N}\right]\).

Now, we tackle the join operation. 
It is easy to see that \(\left\{\overline{A_n\cup C_n}\right\}_{n\in\N}\) is a~good metric.
We must now verify that \(\left[\left\{\overline{A_n\cup C_n}\right\}_{n\in\N}\right]\) does not depend on the~choice of~the~representative elements for \(\mathcal{A}\) and \(\mathcal{C}\).
Similarly as above, we may assume that \(\mathcal{A}\sim\mathcal{B}\) for some \(\mathcal{B}=\{B_n\}_{n\in\N}\in\mathscr{M}\). W.l.o.g.\ it is enough to show \(\left[\left\{\overline{A_n\cup C_n}\right\}_{n\in\N}\right]=\left[\left\{\overline{B_n\cup C_n}\right\}_{n\in\N}\right]\).
Fix \(n\in\N\). We find \(m\geq n\) such that \(A_m\subseteq B_n\). Then \(\overline{A_m\cup C_m}\subseteq \overline{A_m\cup C_n}\subseteq \overline{B_n\cup C_n}\).

Hence it holds that \(\left\{\overline{A_n\cup C_n}\right\}_{n\in\N}\leq\{\overline{B_n\cup C_n}\}_{n\in\N}\).
A dual argument then yields \(\left\{\overline{A_n\cup C_n}\right\}_{n\in\N}\sim\{\overline{B_n\cup C_n}\}_{n\in\N}\).

Because any metric coarser than both \(\mathcal{A}\) and \(\mathcal{C}\) is coarser than \(\left\{\overline{A_n\cup C_n}\right\}_{n\in\N}\) as well, we see that the join
\[
\left[\mathcal{A}\right]\lor\left[\mathcal{C}\right]:=\left[\left\{\overline{A_n\cup C_n}\right\}_{n\in\N}\right]
\]
exists and is well-defined. We conclude that \(\mathscr{M}/\sim\) has a structure of a lattice.
\end{proof}

\begin{definition}
By \(\M_{\mathcal{T}}\) we shall denote the~lattice given by Proposition~\ref{P:LatticeExists} applied to \(\mathcal{T}\) and call it \textit{a lattice of good metrics} on \(\mathcal{T}\).

By abuse of notation, we will (from now on) not distinguish between a good metric \(\mathcal{M}\) on \(\mathcal{T}\) and its equivalence class \([\mathcal{M}]\in\M_{\mathcal{T}}\) unless necessary. Also, for two good metrics \(\mathcal{M}\) and \(\mathcal{N}\) on \(\mathcal{T}\) we will write \(\mathcal{M}\lor\mathcal{N}\) and \(\mathcal{M}\land\mathcal{N}\) instead of \([\mathcal{M}]\lor[\mathcal{N}]\) and \([\mathcal{M}]\land[\mathcal{N}]\), respectively.
\end{definition}

\begin{lemma}
\label{L:CompactlySupportedWtrExtensionClosure}
Equip \(\mathcal{T}\) with two good metrics \(\mathcal{M}=\{B_n\}_{n\in\N}\) and \(\mathcal{N}=\{C_n\}_{n\in\N}\).
Then \(\mathfrak{C}_{\mathcal{M}\vee\mathcal{N}}(\mathcal{T})=\mathfrak{C}_{\mathcal{M}}(\mathcal{T})\cap\mathfrak{C}_{\mathcal{N}}(\mathcal{T})\).
\end{lemma}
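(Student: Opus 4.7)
The plan is to verify the two set inclusions separately, working with the concrete representative $\{\overline{B_n\cup C_n}\}_{n\in\mathbb{N}}$ of the equivalence class $\mathcal{M}\vee\mathcal{N}$ provided by Proposition~\ref{P:LatticeExists}. This is legitimate because, as observed at the outset of the section, the category $\mathfrak{C}_{(-)}(\mathcal{T})$ depends only on the equivalence class of the metric (any finer or coarser but equivalent metric gives the same compactly supported functors).

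The inclusion $\mathfrak{C}_{\mathcal{M}\vee\mathcal{N}}(\mathcal{T})\subseteq\mathfrak{C}_{\mathcal{M}}(\mathcal{T})\cap\mathfrak{C}_{\mathcal{N}}(\mathcal{T})$ is the easy direction and will be immediate. Given $F\in\mathfrak{C}_{\mathcal{M}\vee\mathcal{N}}(\mathcal{T})$, some index $l$ witnesses $F(\overline{B_l\cup C_l})=0$; the chain of inclusions $B_l\subseteq \overline{B_l\cup C_l}$ and $C_l\subseteq \overline{B_l\cup C_l}$ then forces $F(B_l)=0=F(C_l)$, placing $F$ in both $\mathfrak{C}_{\mathcal{M}}(\mathcal{T})$ and $\mathfrak{C}_{\mathcal{N}}(\mathcal{T})$.

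For the reverse inclusion, I would start with an $F\in\mathfrak{C}_{\mathcal{M}}(\mathcal{T})\cap\mathfrak{C}_{\mathcal{N}}(\mathcal{T})$ and pick $n,m\in\mathbb{N}$ such that $F(B_n)=0=F(C_m)$. Setting $k:=\max(n,m)$ and exploiting the decreasing-chain property of the metrics, I obtain $F(B_k)=0=F(C_k)$, hence $F$ vanishes on $B_k\cup C_k$. The task then reduces to propagating this vanishing to the whole extension closure $\overline{B_k\cup C_k}$. To do this I would filter the closure by stages, setting $\mathcal{E}_0:=B_k\cup C_k\cup\{0\}$ and inductively defining $\mathcal{E}_{i+1}$ as the collection of middle terms $Y$ of distinguished triangles $X\to Y\to Z\to \Sigma X$ with $X,Z\in\mathcal{E}_i$. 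Since $\overline{B_k\cup C_k}=\bigcup_{i\ge 0}\mathcal{E}_i$, vanishing on $\mathcal{E}_i$ for every $i$ yields vanishing on the whole closure, and the argument at each stage applies the functor $F$ to the defining triangle, producing the fragment $F(\Sigma X)\to F(Z)\to F(Y)\to F(X)$, whose exactness at $F(Y)$ combined with $F(X)=0=F(Z)$ forces $F(Y)=0$.

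The one delicate point — and the main obstacle — is that the inductive step invokes exactness of the sequence obtained by applying $F$ to a distinguished triangle, i.e.\ the cohomological property of $F$. For general additive $F\in\Mod\dashmodule\mathcal{T}$ this need not hold. However, in Neeman's framework the compactly supported functors arising in the completion are cohomological: representable functors $\Yoneda(X)$ are cohomological, and module colimits $\moco\mathbf{E}$ of Cauchy sequences inherit cohomologicity from exactness of filtered colimits in $\mathrm{Ab}$. Thus the inductive step is valid for the class of $F$ that participate in the completion, and once this point is pushed through, the lemma follows; the remaining work is purely bookkeeping of indices along the two metrics.
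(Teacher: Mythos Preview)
Your approach—working with the representative $\{\overline{B_n\cup C_n}\}_{n\in\mathbb{N}}$ and checking the two inclusions—is exactly the paper's; the paper compresses the whole argument to a single sentence, asserting that for all $F\in\Mod\dashmodule\mathcal{T}$ and fixed $n$ one has $F(\overline{B_n\cup C_n})=0$ if and only if $F(B_n)=0$ and $F(C_n)=0$.

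You have in fact gone further than the paper by noticing that the ``if'' direction needs $\ker F$ to be extension-closed, which for a general additive (non-cohomological) $F$ is not automatic. Your proposed fix—restricting attention to cohomological functors such as $\Yoneda(X)$ and $\moco\mathbf{E}$—does not, however, establish the lemma as stated: by definition $\mathfrak{C}_{\mathcal{M}}(\mathcal{T})$ consists of \emph{all} additive functors in $\Mod\dashmodule\mathcal{T}$ vanishing on some ball, not only the cohomological ones, so proving the inclusion only for the ``functors that participate in the completion'' leaves the general statement open. The paper's one-line proof shares this gap; it simply asserts the equivalence for all $F$ without addressing cohomologicity. In the paper's applications (notably Proposition~\ref{P:DecompostionMetricsCompletion}) only the primed category $\mathfrak{C}'$ appears, and there the relevant functors are $\Hom_{\mathcal{T}}(\nu(-),E)$, hence cohomological, so the issue is harmless downstream. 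If you want a fully rigorous statement, either restrict the lemma to cohomological functors, or note explicitly that only that case is ever invoked.
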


\begin{proof}
This is clear, because for all \(F\in\Mod\dashmodule\mathcal{T}\) and a fixed \(n\in\N\) it holds that \(F\left(\overline{B_n\cup C_n}\right)=0\) if and only if \(F\left(B_n\right)=0\) and \(F\left(C_n\right)=0\).
\end{proof}

\begin{remark}
The lattice \(\M_{\mathcal{T}}\) is bounded with the constant metric \(\mathcal{T}\) being the~larg\-est element and the constant metric \(0\) being the smallest.

However, an infinite intersection is not an infinite meet in general because it is not compatible with equivalences of metrics.
More precisely, consider a metric \(\{B_n\}_{n\in\N}\) such that \(\bigcap_{n\in\N} B_n=0\) and \([\{B_{n}\}_{n\in\N}]\neq[0]\)  (e.g.~any aisle metric \(\left\{\mathcal{T}^{\leq n}\right\}_{n\in\N}\) for a~non\=/degenerate $t$-structure \(\left(\mathcal{T}^{\leq 0},\mathcal{T}^{>0}\right)\), see \cite[Example 12\,i)]{Neeman20},\cite[Definition~4.2]{CummingsGratz24}). 
Then all the metrics of~the~form \(\{B_{kn}\}_{n\in\N}\) for some \(k\in\N\) are equivalent.

Therefore, the meet satisfies \(\bigwedge_{k=1}^\infty[\{B_{kn}\}_{n\in\N}]=[\{B_n\}_{n\in\N}]\), but this block of equivalence is not equal to~\([\{\bigcap_{k=1}^\infty B_{kn}\}_{n\in\N}]=[\{0\}_{n\in\N}]\).
\end{remark}

We show that for a representation-finite hereditary algebra, additive good metrics can be decomposed into ``indecomposable metrics''. If $K$ is an algebraically closed field, every connected representation-finite hereditary algebra is Morita equivalent to the path algebra $KQ$ of a simply laced Dynkin quiver.
For any finite dimensional indecomposable \(KQ\)-module $M$ it then holds that \(\langle M\rangle=\coprodf\left\{\Sigma^iM:i\in\Z\right\}\) in \(\derived^b(\modf\dashmodule KQ)\).

\begin{proposition}
\label{P:DynkinDecomposition}
Let \(K\) be a field and $Q$ a simply laced Dynkin quiver.
Let \(\mathcal{N}:=\{B_n\}_{n\in\N}\) be an additive good metric on~the~category \(\derived^b(\modf\dashmodule KQ)\).
Then
\[
\mathcal{N}=\bigvee_{M\in\ind\dashmodule KQ}\big(\mathcal{N} \wedge \langle M\rangle\big)
\]
in~\(\M_{\derived^b(\modf\dashmodule KQ)}\).
\end{proposition}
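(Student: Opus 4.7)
The plan is to exploit two features simultaneously: the fact that $\derived^b(\modf\dashmodule KQ)$ is Krull-Schmidt with only finitely many indecomposable objects of $\ind\dashmodule KQ$ (by the Dynkin assumption), and that $\mathcal{N}$ being additive means each ball $B_n$ is closed under direct summands. Together these allow any $X \in B_n$ to be ``broken apart'' into contributions lying individually in the constant metrics $\langle M\rangle$.

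First I would unpack the right-hand side using Proposition~\ref{P:LatticeExists}. Since the quiver is Dynkin, $\ind\dashmodule KQ$ is a finite set up to isomorphism, so the join is a \emph{finite} join and can be described explicitly by the formula
\[
\bigvee_{M \in \ind\dashmodule KQ}\bigl(\mathcal{N} \wedge \langle M\rangle\bigr) = \Bigl[\Bigl\{\,\overline{\textstyle\bigcup_{M \in \ind\dashmodule KQ} \bigl(B_n \cap \langle M\rangle\bigr)}\,\Bigr\}_{n \in \N}\Bigr].
\]
The inequality ``$\leq$'' is immediate: each $B_n \cap \langle M\rangle \subseteq B_n$, and since $B_n$ is itself extension-closed, the extension closure of the union on the right stays inside $B_n$.

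For the non-trivial direction ``$\geq$'' I would argue pointwise on each ball. Fix $n \in \N$ and $X \in B_n$. Because $\mathcal{S} = \derived^b(\modf\dashmodule KQ)$ is a $K$-linear Krull-Schmidt triangulated category and $KQ$ is hereditary, we can write
\[
X \simeq \bigoplus_{i=1}^{r} \Sigma^{a_i} M_i
\]
for some $M_i \in \ind\dashmodule KQ$ and $a_i \in \Z$. Additivity of $\mathcal{N}$ now plays the crucial role: each summand $\Sigma^{a_i} M_i$ is an element of $B_n$. Since moreover $\Sigma^{a_i} M_i \in \langle M_i\rangle = \coprodf\{\Sigma^j M_i : j \in \Z\}$, we conclude that each summand belongs to $B_n \cap \langle M_i\rangle$. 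Iterating split triangles then places the finite direct sum $X$ inside $\overline{\bigcup_M (B_n \cap \langle M\rangle)}$, which is exactly what we needed. This shows $B_n \subseteq \overline{\bigcup_M (B_n \cap \langle M\rangle)}$ with the same index $n$, so the two metrics are not only equivalent but actually term-by-term contained in each other.

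I do not expect any real obstacle here beyond bookkeeping: the argument is essentially a one-step unwrapping of the definitions once the Krull-Schmidt decomposition is in place. The only subtlety is to notice that the Dynkin hypothesis is what allows us to treat the join as a finite operation and what guarantees that $\langle M\rangle = \coprodf\{\Sigma^j M : j \in \Z\}$ for each indecomposable $M$; and that additivity of $\mathcal{N}$ (closure of $B_n$ under summands) is exactly the hypothesis needed to transfer membership $X \in B_n$ to each indecomposable summand of $X$.
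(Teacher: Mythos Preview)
Your proof is correct and follows essentially the same route as the paper's: use Krull--Schmidt plus additivity to split each $X\in B_n$ into shifted indecomposables, each landing in some $B_n\cap\langle M\rangle$, and observe that the reverse containment is automatic since $B_n$ is extension-closed. One cosmetic point: your labels ``$\leq$'' and ``$\geq$'' are swapped relative to the paper's convention (recall $\mathcal{N}\leq\mathcal{M}$ means $\mathcal{N}$ is \emph{finer}), so the direction you call ``immediate'' is actually $\bigvee_M(\mathcal{N}\wedge\langle M\rangle)\leq\mathcal{N}$, and your ``non-trivial'' direction is $\mathcal{N}\leq\bigvee_M(\mathcal{N}\wedge\langle M\rangle)$.
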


\begin{proof} 
It is enough to prove that two specific representatives from each of~the~equivalence classes of good metrics are equivalent. We shall do so for the~metrics
\(\mathcal{N}\) and~\(\left\{\overline{\bigcup_{M\in\ind\dashmodule KQ} B_n\cap\langle M\rangle}\right\}_{n\in\N}\).

``$\leq$''
Let \(n\in\N\). For every indecomposable object from \(B_n\), which is necessiarily of the form \(\Sigma^iM\) for some \(M\in\ind\dashmodule KQ\) and \(i\in\Z\), we have \(\Sigma^iM\in B_n\cap\langle M\rangle\).

Since the~metric \(\mathcal{N}\) is additive, all objects from $B_n$ are extensions of indecomposables from \(B_n\). Therefore, we have
\[
B_n\subseteq \overline{\bigcup_{M\in\ind\dashmodule KQ} B_n\cap\langle M\rangle}
.\]
The proves \(
\mathcal{N}\leq\bigvee_{M\in\ind\dashmodule KQ}\big(\mathcal{N} \wedge \langle M\rangle\big)
.\)

``$\geq$''
Let \(n\in\N\).
For all \(M\in\ind\dashmodule KQ\), we have \(B_n\cap\langle M\rangle\subseteq B_n\).
Hence \(\overline{\bigcup_{M\in\ind\dashmodule KQ}B_n\cap\langle M\rangle}\subseteq B_n\) for any \(M\in\ind\dashmodule KQ\).

Consequently, we get \(\bigvee_{M\in\ind\dashmodule KQ}\big(\mathcal{N} \wedge \langle M\rangle\big)\leq\mathcal{N}\)
by the property of the join.
The claim now follows.
\end{proof}

\begin{example}
Let \(\{K_m\}_{m\in\N}\) be a countable family of fields. We shall work with~the~coproduct triangulated category \(\mathcal{S}:=\coprod_{m=1}^\infty\derived(K_m)\).
The indecomposables of~\(\mathcal{S}\) are precisely the fields of the form \(K_m\) for some \(m\in\N\) concentrated in~degree zero in the $m$-th component of the category, and all their shifts.

For every \(m\in\N\) we define a metric \(\{B^m_n\}_{n\in\N}\) by the formula
\[
B^m_n=\coprodf\left\{\Sigma^iK_m:i\geq n\right\}
\text{\ \ \ \ \ for all \(n\in\N\)}.
\]
We show that the infinite join \(\bigvee_{m=1}^\infty\{B^m_n\}_{n\in\N}\) does not exist using a diagonal argument.

Assume for a contradiction that there exists a metric \(\{C_n\}_{n\in\N}\) on \(\mathcal{T}\) satisfying \(\{C_n\}_{n\in\N}=\bigvee_{m=1}^\infty\{B^m_n\}_{n\in\N}\).
Since \(\{C_n\}_{n\in\N}\) is coarser than \(\{B^m_n\}_{n\in\N}\) for any choice of \(m\in\N\), there exists a strictly increasing function \(f_m:\N\rightarrow\N\) with \(B^m_{f_m(n)}\subseteq C_n\) for all \(n\in\N\).

The metric \(\left\{\overline{\bigcup_{m\in\N}B^m_{f_m(n)}}\right\}_{n\in\N}\leq\{C_n\}_{n\in\N}\) is coarser than \(\{B^m_n\}_{n\in\N}\) for all indices \(m\in\N\), so by the definition of a join we actually get an equivalence \(\left\{\overline{\bigcup_{m\in\N}B^m_{f_m(n)}}\right\}_{n\in\N}\sim\{C_n\}_{n\in\N}\).

Now consider the metric \(\left\{\overline{\bigcup_{m\in\N}B^m_{f_m(mn)}}\right\}_{n\in\N}\leq\{C_n\}_{n\in\N}\).
It is also coarser than \(\{B^m_n\}_{n\in\N}\) for all indices \(m\in\N\), so by the definition of a join, we also get an~equivalence \(\left\{\overline{\bigcup_{m\in\N}B^m_{f_m(n)}}\right\}_{n\in\N}\sim\left\{\overline{\bigcup_{m\in\N}B^m_{f_m(mn)}}\right\}_{n\in\N}\sim\{C_n\}_{n\in\N}\).

This means, in particular, that there exists \(n\in\N\) satisfying
\[
\overline{\bigcup_{m\in\N}B^m_{f_m(n)}}\subseteq\overline{\bigcup_{m\in\N}B^m_{f_m(m)}}
\]
but it is impossible because \(B^m_{f_m(n)}\not\subseteq B^m_{f_m(m)}\) for all \(m> n\).
We have reached a~contradiction.

Not only we have proven that \(\bigvee_{m=1}^\infty\{B^m_n\}_{n\in\N}\) does not exist, and the lattice \(\M_{\mathcal{S}}\) is not complete, but also that a~decomposition statement such as Proposition~\ref{P:DynkinDecomposition} cannot hold in general for triangulated categories with infinitely many indecomposables.
No metric on \(\mathcal{S}\) coarser than \(\{B^m_n\}_{n\in\N}\) for all \(m\in\N\) can be written as an~infinite join of \(\{B^m_n\}_{n\in\N}\)'s. 
\end{example}

\subsection{Decomposition of metrics}

\begin{definition}
Let \(\mathcal{S}\) be a thick subcategory of \(\mathcal{T}\). Let \((\mathcal{T}^{\leq0},\mathcal{T}^{>0})\) be a $t$-structure on \(\mathcal{T}\).
We define the~following good metrics (up to equivalence) on \(\mathcal{S}\):
\begin{enumerate}[label=(\roman*)]
\item 
\textit{the aisle metric} as the good metric \(\mathcal{S}_{-}:=\left\{\mathcal{T}^{\leq n}\cap\mathcal{S}\right\}_{n\in\N}\),
\item 
\textit{the coaisle metric} as the good metric \(\mathcal{S}_{+}:=\left\{\mathcal{T}^{> n}\cap\mathcal{S}\right\}_{n\in\N}\),
\item 
\textit{the t-structure metric} as the good metric \(\mathcal{S}_{\infty}:=\mathcal{S}_{-}\vee\mathcal{S}_{+}\) in \(\M_{\mathcal{S}}\).
\end{enumerate}

If the subcategory \(\mathcal{S}\) is not specified, we implicitly take \(\mathcal{S}=\mathcal{T}\).
\end{definition}

\begin{remark}
Similar metrics, namely metrics given by specific shifts of kernels of a fixed homological functor \(\mathcal{T}\rightarrow\mathcal{A}\) into an abelian category \(\mathcal{A}\), have been studied by Neeman in \cite[Example~12]{Neeman20}.
The notions of aisle and coaisle metrics were formally introduced by Cummings and Gratz in \cite[Definition~4.2]{CummingsGratz24}.
The notion of a~$t$\=/structure metric is new, while it may in some cases coincide with the metric from \cite[Example~3.4]{Neeman25A} called Krause's metric due its connection to Krause's construction of the sequential completion of a triangulated category from \cite{Krause20}.
\end{remark}

\begin{definition}
Let \(\mathcal{S}\) be a thick subcategory of \(\mathcal{T}\).
Let \((\mathcal{T}^{\leq0},\mathcal{T}^{>0})\) be a $t$-structure on~\(\mathcal{T}\). Let \(\mathcal{M}\) be a good metric on~\(\mathcal{S}\). 
We define the~following good metrics (up to equivalence) on \(\mathcal{S}\):
\begin{enumerate}[label=(\roman*)]
\item 
\textit{the aisle submetric}~of~\(\mathcal{M}\) as the good metric \(\mathcal{M}_{-}:=\mathcal{M}\wedge\mathcal{S}_{-}\) in~\(\M_{\mathcal{S}}\),
\item 
\textit{the coaisle submetric}~of~\(\mathcal{M}\) as the good metric \(\mathcal{M}_{+}:=\mathcal{M}\wedge\mathcal{S}_{+}\) in~\(\M_{\mathcal{S}}\),
\item 
\textit{the t-structure submetric}~of~\(\mathcal{M}\) as the good metric \(\mathcal{M}_{\infty}:=\mathcal{M}\wedge\mathcal{S}_{\infty}\) in~\(\M_{\mathcal{S}}\),
\end{enumerate}
\end{definition}

For bounded derived categories of hereditary algebras, the $t$-structure submetric of a good metric does not contribute much to the completion in the following sense:
\begin{proposition}
\label{P:DecompostionMetricsCompletion}
Let $R$ be a hereditary, finitely generated algebra over a commutative noetherian ring.
Equip \(\mathcal{S}:=\derived^b(\modf\dashmodule R)\) with the~standard $t$\=/structure \((\mathcal{S}^{\leq0},\mathcal{S}^{>0})\) and additive good metrics \(\mathcal{M}=\{B_n\}_{n\in\N}\) and \(\mathcal{N}=\{C_n\}_{n\in\N}\).
If \(\mathcal{M}=\mathcal{M}_{\infty}\vee\mathcal{N}\), then \(\mathfrak{S}'_{\mathcal{M}}(\mathcal{S})=\mathfrak{S}'_{\mathcal{N}}(\mathcal{S})\).
\end{proposition}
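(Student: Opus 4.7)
The plan is to prove the two containments $\mathfrak{S}'_{\mathcal{N}}(\mathcal{S})\subseteq\mathfrak{S}'_{\mathcal{M}}(\mathcal{S})$ and $\mathfrak{S}'_{\mathcal{M}}(\mathcal{S})\subseteq\mathfrak{S}'_{\mathcal{N}}(\mathcal{S})$ separately. The hypothesis $\mathcal{M}=\mathcal{M}_{\infty}\vee\mathcal{N}$ forces $\mathcal{N}\leq\mathcal{M}$, which already gives $\mathfrak{L}'_{\mathcal{N}}(\mathcal{S})\subseteq\mathfrak{L}'_{\mathcal{M}}(\mathcal{S})$ and $\mathfrak{C}'_{\mathcal{M}}(\mathcal{S})\subseteq\mathfrak{C}'_{\mathcal{N}}(\mathcal{S})$, so only the opposite halves of each containment need work. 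I would pass throughout to the equivalent representative $\{\tilde B_n\}_{n\in\N}$ of $\mathcal{M}$ coming from the join, $\tilde B_n:=\overline{(B_n\cap(\mathcal{S}_{\infty})_n)\cup C_n}$. The crucial feature of the hereditary setting is that every object $X\in\mathcal{S}$ splits as $X\simeq\coprod_i\Sigma^{-i}H^i(X)$: this ensures both that, for $m$ beyond a threshold depending on a fixed cohomological window, $(\mathcal{S}_{\infty})_m$ contains only complexes whose cohomology lies outside that window (the vanishing $\Ext^{\geq 2}_{R}=0$ splits the extensions of an aisle piece by a coaisle piece once the degree gap is wide enough), and that the truncation $\tau^{[-l-1,l+1]}:=\tau^{\geq-l-1}\circ\tau^{\leq l+1}$ produces direct summands on $\mathcal{S}$.

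For $\mathfrak{S}'_{\mathcal{N}}(\mathcal{S})\subseteq\mathfrak{S}'_{\mathcal{M}}(\mathcal{S})$ I would fix $E\in\mathfrak{S}'_{\mathcal{N}}(\mathcal{S})$ and show $E\in\mathfrak{C}'_{\mathcal{M}}(\mathcal{S})$. By Proposition~\ref{P:BoundedHereditaryCompletion} the cohomology of $E$ sits in some $[-l,l]$, and by hypothesis there is an $s$ with $C_s\subseteq{}^{\perp}E$. For $m\geq\max(s,l+2)$ the claim $\tilde B_m\subseteq{}^{\perp}E$ reduces, via the cohomologicality of $\Hom_{\mathcal{T}}(\blank,E)$ and a straightforward induction on extension triangles, to checking vanishing on the generators $B_m\cap(\mathcal{S}_{\infty})_m$ and $C_m$ of the extension closure. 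The vanishing on $C_m\subseteq C_s$ is immediate, while on $B_m\cap(\mathcal{S}_{\infty})_m$ it follows from the hereditary decomposition: any $X$ with cohomology outside $[-l,l+1]$ satisfies
\[
\Hom_{\mathcal{T}}(X,E)=\bigoplus_{j}\bigl(\Hom_{R}(H^{j}X,H^{j}E)\oplus\Ext^{1}_{R}(H^{j}X,H^{j-1}E)\bigr)=0.
\]

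For $\mathfrak{S}'_{\mathcal{M}}(\mathcal{S})\subseteq\mathfrak{S}'_{\mathcal{N}}(\mathcal{S})$ I would fix $E\in\mathfrak{S}'_{\mathcal{M}}(\mathcal{S})$ and show $E\in\mathfrak{L}'_{\mathcal{N}}(\mathcal{S})$. Corollary~\ref{C:UniformBoundedCohomologyCauchySequenceRestrictionGeneralised} furnishes a Cauchy sequence $\mathbf{E}=(E_n,e_n)_{n\in\N}$ with respect to $\mathcal{M}$ such that $\hoco\mathbf{E}\simeq E$ and each $E_n$ has cohomology in $[-l,l+1]$; the long exact sequence then places the cohomology of $\cone e_n$ inside $[-l-1,l+1]$. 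For $m\geq l+2$, one has $\cone e_n\in\tilde B_m$ eventually. The decisive step is to apply $\tau:=\tau^{[-l-1,l+1]}$: as a triangulated functor it commutes with extension closure, annihilates every object of $(\mathcal{S}_{\infty})_m$ (by the cohomological description), and sends $C_m$ into itself (since $\tau x$ is a direct summand of $x$ and $\mathcal{N}$ is additive), so $\tau\tilde B_m\subseteq C_m$. Because $\cone e_n$ already equals $\tau\cone e_n$, one concludes $\cone e_n\in C_m$ eventually, whence $\mathbf{E}$ is also Cauchy with respect to $\mathcal{N}$ and $E\in\mathfrak{L}'_{\mathcal{N}}(\mathcal{S})$.

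The main obstacle I anticipate is the careful book-keeping of the cohomological description of $(\mathcal{S}_{\infty})_m$: I must verify that in the hereditary setting $\overline{(\mathcal{S}_-)_m\cup(\mathcal{S}_+)_m}$ collapses, for $m$ beyond a threshold determined by $l$, to direct sums of an aisle object and a coaisle object, so that $\tau^{[-l-1,l+1]}$ indeed annihilates it. This hinges on $\Ext^{\geq 2}_{R}$-vanishing on hereditary $R$ together with a careful translation through the indexing conventions of the standard $t$-structure.
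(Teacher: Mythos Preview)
Your approach is essentially the paper's: both inclusions use the same ingredients (Proposition~\ref{P:BoundedHereditaryCompletion} together with Lemma~\ref{L:CompactlySupportedWtrExtensionClosure} for ``$\supseteq$'', and Corollary~\ref{C:UniformBoundedCohomologyCauchySequenceRestrictionGeneralised} followed by a double-truncation argument on the cones for ``$\subseteq$'').

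One caution: truncation is \emph{not} a triangulated functor, so the phrase ``as a triangulated functor it commutes with extension closure'' is wrong as stated---you rightly flag this as the step that still needs the hereditary splitting to go through. The paper streamlines precisely this point by replacing $\mathcal{M}_\infty$ with the coarser $\mathcal{S}_\infty$: since $\mathcal{M}=\mathcal{M}_\infty\vee\mathcal{N}\leq\mathcal{S}_\infty\vee\mathcal{N}$, for each fixed $l$ and $m$ the cones eventually lie in $\overline{\mathcal{S}^{\leq -l}\cup\mathcal{S}^{>l}\cup C_m}$, and one applies $\tr^{\leq l}\circ\tr^{>-l}$ directly to that simpler extension closure. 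This removes the need to analyse the internal structure of $(\mathcal{S}_\infty)_m$ that you anticipate as the main obstacle; the remaining assertion---that the double truncation of this closure lands in $C_m$---rests, as you say, on the hereditary decomposition and the additivity of $\mathcal{N}$.
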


\begin{proof}
The (pre)completions and compactly supported elements are calculated inside the good extension \(\mathcal{S}\xhookrightarrow{}\derived(R)\) using Theorem~\ref{T:GoodExtensionComputationalTool}.

``$\supseteq$'' Let \(E\in\mathfrak{S}'_{\mathcal{N}}(\mathcal{S})\). Then \(E\in\mathfrak{L}'_{\mathcal{N}}(\mathcal{S})\subseteq\mathfrak{L}'_{\mathcal{M}}(\mathcal{S})\). By Proposition~\ref{P:BoundedHereditaryCompletion}, we know \(E\in\derived^b(\ModR)\), so \(E\in\mathfrak{C}'_{\mathcal{M}_{\infty}}(\mathcal{S})\).
Since also \(E\in\mathfrak{C}'_{\mathcal{N}}(\mathcal{S})\), by Lemma~\ref{L:CompactlySupportedWtrExtensionClosure} we have \(E\in\mathfrak{C}'_{\mathcal{M}_{\infty}\vee\mathcal{N}}(\mathcal{S})=\mathfrak{C}'_{\mathcal{M}}(\mathcal{S})\).
Thus \(E\in\mathfrak{S}'_{\mathcal{M}}(\mathcal{S})\).

``$\subseteq$''
 Let \(E\in\mathfrak{S}'_{\mathcal{M}}(\mathcal{S})\). Then \(E\in\mathfrak{C}'_{\mathcal{M}}(\mathcal{S})\subseteq\mathfrak{C}'_{\mathcal{N}}(\mathcal{S})\) by Lemma~\ref{L:CompactlySupportedWtrExtensionClosure}.
By Proposition~\ref{P:BoundedHereditaryCompletion}, there exists \(l\in\N\) such that $E$ has cohomology concentrated in degrees \(-l+1,\ldots,l-1\). By Corollary~\ref{C:UniformBoundedCohomologyCauchySequenceRestrictionGeneralised}, we can find a Cauchy sequence (with respect to~\(\mathcal{M}\)) \(\mathbf{E}=(E_n,e_n)_{n\in\N}\) with \(E\simeq\hoco\mathbf{E}\) such that for all \(n\in\N\) the~entry \(E_n\) has cohomology concentrated in degrees \(-l+1,\ldots,l\).

Let $m\in\N$.
We find \(N\in\N\) such that for all \(n\geq N\) we have
\[
\cone e_n\in\overline{\mathcal{S}^{\leq-l}\cup\mathcal{S}^{>l}\cup C_m}
.\]

After applying the double truncation functor \(\tr^{\leq l}\circ\tr^{>-l}\) we obtain that
\[
\cone e_n=\tr^{\leq l}\circ\tr^{>-l}(\cone e_n)\in\tr^{\leq l}\circ\tr^{>-l}\left(\overline{\mathcal{S}^{\leq-l}\cup\mathcal{S}^{>l}\cup C_m}\right)\subseteq C_m
.\]
This implies that \(\mathbf{E}\) stabilises at \(C_m\). As $m$ was arbitrary, \(\mathbf{E}\) is Cauchy with respect to \(\mathcal{N}\).
Hence \(E\in\mathfrak{L}'_{\mathcal{N}}(\mathcal{S})\) and consequently \(E\in\mathfrak{S}'_{\mathcal{N}}(\mathcal{S})\).
\end{proof}

The condition \(\mathcal{M}=\mathcal{M}_{\infty}\vee\mathcal{N}\) from Proposition~\ref{P:DecompostionMetricsCompletion} can be interpreted as some sort of local uniform convergence of \(\mathcal{M}\) towards \(\mathcal{N}\). Up to the exception of finitely many indices \(n\in\N\), objects with bounded cohomology cannot distinguish between the metrics \(\mathcal{M}=\{B_n\}_{n\in\N}\) and \(\mathcal{N}=\{C_n\}_{n\in\N}\).

The last lemma of this section describes a scenario where the conditions of Proposition~\ref{P:DecompostionMetricsCompletion} are not satisfied.

\begin{lemma}
\label{L:ObstructingObjects}
Let $R$ be a hereditary, finitely generated algebra over a commutative noetherian ring.
Equip \(\mathcal{S}:=\derived^b(\modf\dashmodule R)\) with the~standard $t$\=/structure \((\mathcal{S}^{\leq0},\mathcal{S}^{>0})\) and an additive good metric \(\mathcal{M}=\{B_n\}_{n\in\N}\).
If \(\mathcal{M}\not\sim\mathcal{M}_{\infty}\vee\mathcal{B}\), where \(\mathcal{B}:=\bigcap_{n\in\N} B_n\), then for every \(E\in\mathfrak{S}'_{\mathcal{M}}(\mathcal{S})\) there exist an additive good metric \(\mathcal{N}=\{C_n\}_{n\in\N}\sim\mathcal{M}\) on \(\mathcal{S}\) and a family of finitely generated $R$-modules \((X_n)_{n\in\N}\) such that for all \(n\in\N\):
\begin{itemize}
    \item \(\Hom_{\derived(R)}(\Sigma^jX_n,E)=0\) for all \(j\in\Z\),
    \item \(X_n\in C_n\),
    \item \(X_{n}\notin \thick\big(H^0(C_{n+1})\big)\).
\end{itemize}
\end{lemma}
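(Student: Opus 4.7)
The plan is to use the failure $\mathcal{M}\not\sim\mathcal{M}_\infty\vee\mathcal{B}$ to manufacture, in arbitrarily deep balls of~$\mathcal{M}$, finitely generated modules outside~$\mathcal{B}$, and then to reindex~$\mathcal{M}$ so that these modules occupy exactly the right depth for the three bullet points. Since $\mathcal{M}_\infty\vee\mathcal{B}\leq\mathcal{M}$ always holds, the hypothesis furnishes an integer $m_0\in\N$ (which I enlarge beyond $s_0+l+1$, where $s_0$ is a compact\=/support level of~$E$ and $l$ bounds the cohomological amplitude of~$E$ from Proposition~\ref{P:BoundedHereditaryCompletion}) such that for every $N\geq m_0$ one can pick $Y_N\in B_N\setminus(\mathcal{M}_\infty\vee\mathcal{B})_{m_0}$. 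Using that $R$ is hereditary, I would write $Y_N\simeq\bigoplus_i\Sigma^{-i}H^i(Y_N)$, observe that each summand with $i\leq -m_0$ or $i>m_0$ lies in $B_{m_0}\cap(\mathcal{S}_\infty)_{m_0}\subseteq(\mathcal{M}_\infty\vee\mathcal{B})_{m_0}$, and exploit the resulting split triangle together with the extension\=/closedness of the bad set to conclude that the in\=/window part remains bad; in particular it is not in~$\mathcal{B}$. A further cohomological split yields some $|i_N|\leq m_0$ with $M_N:=H^{i_N}(Y_N)\notin\mathcal{B}$, and by iterating the axiom $\Sigma^{\pm1}B_{n+1}\subseteq B_n$ this $M_N$ lies in $B_{N-m_0}$.

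The $\Hom$-vanishing comes for free once $M_N$ is deep enough. Because $R$ is hereditary and $E\simeq\bigoplus_{i=-l}^l\Sigma^{-i}H^i(E)$, vanishing of $\Ext^{\neq 0,1}_R$ forces $\Hom_{\derived(R)}(\Sigma^jM_N,E)$ to be non\=/zero only for $j\in\{-l-1,\dots,l\}$; for any such~$j$, if $M_N\in B_{s_0+l+1}$, then $\Sigma^jM_N\in B_{s_0}\subseteq{}^{\perp}E$, and thus the whole family of Hom-groups vanishes simultaneously.

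Finally I would build $\mathcal{N}$ inductively. For each $M_N$, the integer $k(M_N):=\min\{k:M_N\notin B_k\}$ is finite because $M_N\notin\mathcal{B}=\bigcap_kB_k$. Set $f(1):=s_0+l+m_0+1$; given~$f(n)$, choose $N$ with $N-m_0>f(n)$, put $X_n:=M_N$ (so $X_n\in B_{f(n)}$ and $k(X_n)>f(n)$), and declare $f(n+1):=k(X_n)+2$. Then $\mathcal{N}:=\{B_{f(n)}\}_{n\in\N}$ is an additive good metric, equivalent to~$\mathcal{M}$ because $f$ is strictly increasing and unbounded. The first two bullets hold by construction, and Lemma~\ref{L:BruningWideTheoremForMetrics} supplies
\[
\thick\big(H^0(C_{n+1})\big)\cap\modf\dashmodule R=\wide\big(H^0(B_{f(n+1)})\big)\subseteq B_{f(n+1)-2}=B_{k(X_n)},
\]
which does not contain $X_n$ by the very definition of $k(X_n)$; since $X_n$ is a~module, this gives $X_n\notin\thick(H^0(C_{n+1}))$.

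The main obstacle is the opening reduction — propagating the failure $Y_N\notin(\mathcal{M}_\infty\vee\mathcal{B})_{m_0}$ through the cohomological decomposition of~$Y_N$ down to a single cohomology module outside~$\mathcal{B}$, in spite of the fact that the bad set $\overline{(B_{m_0}\cap(\mathcal{S}_\infty)_{m_0})\cup\mathcal{B}}$ is extension\=/closed but not a priori summand\=/closed. This is handled via the split triangles coming from direct\=/sum decompositions in the hereditary category~$\mathcal{S}$ together with the additivity of~$\mathcal{M}$ and the thickness of~$\mathcal{B}$.
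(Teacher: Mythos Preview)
Your argument is correct and follows essentially the same route as the paper's proof: exploit the failure of $\mathcal{M}\leq\mathcal{M}_\infty\vee\mathcal{B}$ to find, in every sufficiently deep ball, a module $X_n\notin\mathcal{B}$ lying in a bounded cohomological window, then reindex by a strictly increasing $f$ so that $C_n=B_{f(n)}$ and apply Lemma~\ref{L:BruningWideTheoremForMetrics} for the third bullet. The only notable difference is that the paper replaces your ball $(\mathcal{M}_\infty\vee\mathcal{B})_{m_0}$ by the coarser but simpler extension-closed set $\overline{\mathcal{B}\cup\mathcal{S}^{\leq -l}\cup\mathcal{S}^{>l}}$; because this already contains every cohomology summand outside the window $[-l,l]$, one passes in a single step from an arbitrary obstruction object to a shifted module $\Sigma^iX_1$ not in the set, whence $X_1\notin\mathcal{B}$---this bypasses the two-stage ``in-window part, then single cohomology'' reduction that you flag as the main obstacle.
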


\begin{proof}
The intersection \(\mathcal{B}=\bigcap_{n\in\N} B_n\) is, by \cite[Lemma~3.5]{Matousek26A}, a thick subcategory of \(\mathcal{S}\), so the expression \(\mathcal{M}_{\infty}\vee\mathcal{B}\) makes sense.
Since \(\mathcal{M}\gneq\mathcal{M}_{\infty}\vee\mathcal{B}\), there exists
\(l\in\N\) such that for all \(n\in\N\) we have
\(B_n\not\subseteq\overline{\mathcal{B}\cup\mathcal{S}^{\leq-l}\cup\mathcal{S}^{>l}}\).
The completion \(\mathfrak{S}'_{\mathcal{M}}(\mathcal{S})\) is calculated inside the good extension \(\mathcal{S}\xhookrightarrow{}\derived(R)\) using Theorem~\ref{T:GoodExtensionComputationalTool}.

Pick \(E\in\mathfrak{S}'_{\mathcal{M}}(\mathcal{S})\). Using Proposition~\ref{P:BoundedHereditaryCompletion}, we find \(k\in\N\) such that $E$ has cohomology concentrated in degrees \(-k,\ldots,k-1\). Suppose \(E\) is compactly supported at some \(s_1\in\N\).
Because \(B_{s_1+l+k}\not\subseteq\overline{\mathcal{B}\cup\mathcal{S}^{\leq-l}\cup\mathcal{S}^{>l}}\), we can find \(X_1\in\modf\dashmodule R\) and \(-l\leq i\leq l\) such that \(\Sigma^iX_1\in B_{s_1+l+k}\setminus\overline{\mathcal{B}\cup\mathcal{S}^{\leq -l}\cup\mathcal{S}^{>l}}\).

Then \(X_1\in\Sigma^{-i}B_{s_1+l+k}\subseteq B_{s_1+k}\), and \(\Sigma^jX_1\in B_{s_1}\) for all \(-k\leq j\leq k\). Due to \(E\in B_{s_1}^{\perp}\) and the bounds on the~cohomology of $E$, we get \(\Hom_{\derived(R)}(\Sigma^jX_1,E)=0\) for all \(j\in\Z\).
Because \(X_1\notin \mathcal{B}\), there exists \(s_2>s_1+2\) such that \(X_1\notin B_{s_2-2}\). Then \(X_1\notin\thick\big(H^0(B_{s_2})\big)\) since otherwise we would have \(X_1\in\wide\big(H^0(B_{s_2})\big)\subseteq B_{s_2-2}\) due to Lemma~\ref{L:BruningWideTheoremForMetrics}.

We continue the construction of \(s_n\) and \(X_n\) for general \(n\in\N\) inductively by the~same process as above.
We finish the proof by defining \(\mathcal{N}:=\{B_{s_n}\}_{n\in\N}\) and noticing \(\mathcal{N}\sim\mathcal{M}\).
\end{proof}

\section{Completions for commutative noetherian rings}
\label{sec:CommutativeNoetherian}

This section contains one of the main results of this paper (Theorem~\ref{T:DedekindCompletions}); that is an explicit description of all completions of the bounded derived category of a~hereditary commutative noetherian ring with respect to additive good metrics.

\subsection{General completion lemmas}

Before we start the actual computation of the completions in our special case, we need the following two facts about general completions of triangulated categories.

\begin{lemma}
\label{L:CompactlySupportedElementOfTheOriginalCategory}
Let \(\mathcal{S}\) be a triangulated category equipped with a good metric and \(X\in\mathcal{S}\).
If \(\Yoneda(X)\in\mathfrak{C}(\mathcal{S})\), then \(\Yoneda(X)\in\mathfrak{S}(\mathcal{S})\).
\end{lemma}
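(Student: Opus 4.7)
The plan is to recall that by definition $\mathfrak{S}(\mathcal{S}) = \mathfrak{C}(\mathcal{S}) \cap \mathfrak{L}(\mathcal{S})$. Since we are already given $\Yoneda(X) \in \mathfrak{C}(\mathcal{S})$, the only thing left to verify is that $\Yoneda(X)$ belongs to the pre-completion $\mathfrak{L}(\mathcal{S})$, i.e.\ that $\Yoneda(X)$ is isomorphic to the module colimit of some Cauchy sequence in $\mathcal{S}$.

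The natural candidate is the constant sequence $\mathbf{X} = (X_n, x_n)_{n \in \N}$ with $X_n := X$ and $x_n := \id_X$. First I would check this is a Cauchy sequence with respect to the given good metric $\mathcal{M} = \{B_n\}_{n \in \N}$: for any $n \leq m$ the composition $x_{n,m} = \id_X$, so $\cone x_{n,m} \simeq 0 \in B_k$ for every $k \in \N$; hence $\mathbf{X}$ stabilises at $B_k$ for every $k$ and is therefore Cauchy.

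Next I would identify the module colimit. The image $\Yoneda(\mathbf{X})$ in $\Mod\dashmodule\mathcal{S}$ is a constant diagram $\Yoneda(X) \xrightarrow{\id} \Yoneda(X) \xrightarrow{\id} \cdots$ of identity morphisms, whose directed colimit in $\Mod\dashmodule\mathcal{S}$ is canonically isomorphic to $\Yoneda(X)$. Thus $\moco \mathbf{X} \simeq \Yoneda(X)$, which places $\Yoneda(X)$ in $\mathfrak{L}(\mathcal{S})$. Combined with the assumption $\Yoneda(X) \in \mathfrak{C}(\mathcal{S})$, this yields $\Yoneda(X) \in \mathfrak{S}(\mathcal{S})$ and completes the proof.

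There is essentially no obstacle; the statement is a direct unpacking of the definition of $\mathfrak{S}(\mathcal{S})$ combined with the trivial observation that any object of $\mathcal{S}$ itself lies in the pre-completion via its constant sequence.
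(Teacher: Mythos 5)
Your proposal is correct and coincides with the paper's own argument: the paper also uses the constant sequence $(X,\id_X)_{n\in\N}$, notes it is Cauchy with module colimit $\Yoneda(X)$, and then intersects with the assumed membership in $\mathfrak{C}(\mathcal{S})$.
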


\begin{proof}
There is an equality \(\Yoneda(X)=\moco\mathbf{E}\) for~the~constant Cauchy sequence \(\mathbf{E}=\left(X,\id_{X}\right)_{n\in\N}\) in \(\mathcal{S}\), so \(\Yoneda(X)\in\mathfrak{L}(\mathcal{S})\).
Since we already know that \(\Yoneda(X)\) is compactly supported, we get \(\Yoneda(X)\in\mathfrak{S}(\mathcal{S})\).
\end{proof}

The above argument directly translates into the setting of good extensions, giving us that \(\mathfrak{C}'(\mathcal{S})\cap\mathcal{S}\subseteq\mathfrak{S}'(\mathcal{S})\).

\begin{lemma}
\label{L:SequenceOfComplactlySupportedElements}
Equip a triangulated category \(\mathcal{S}\) with a good metric \(\mathcal{M}=\{B_n\}_{n\in\N}\). Let \(\mathbf{E}=(E_n,e_n)_{n\in\N}\) be a Cauchy sequence compactly supported at \(s\in\N\), and \(E:=\moco \mathbf{E}\).  Let \(\mathbf{F}=(F_n,f_n)_{n\in\N}\) be any (non-necessiarily Cauchy) sequence satisfying \(F_n\in B_s^{\perp}\) for all \(n\in\N\).
If \(E\simeq\moco \mathbf{F}\), then \(E\) is a direct summand in \(\Mod\dashmodule\mathcal{S}\) of an object from \(\Yoneda(\mathcal{S})\).
\end{lemma}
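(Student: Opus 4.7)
My plan is to exploit Lemma~\ref{L:BasicFactorisationProperty} twice in order to produce an explicit section of one of the colimit injections coming from the sequence $\mathbf{F}$. The preliminary observations I will need are that $E = \moco \mathbf{E}$ is a cohomological functor (being a filtered colimit of the representables $\Yoneda(E_n)$, and filtered colimits being exact and pointwise in $\Mod\dashmodule \mathcal{S}$), and that for every $k \in \N$ the representable $\Yoneda(F_k)$ is cohomological and compactly supported at $s$, the latter being a direct reformulation of the hypothesis $F_k \in B_s^{\perp}$. Let $\varphi_n : \Yoneda(E_n) \to E$ and $\psi_k : \Yoneda(F_k) \to E$ denote the colimit injections arising from $\mathbf{E}$ and $\mathbf{F}$, respectively. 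Using that $\mathbf{E}$ is Cauchy, I will fix $N \in \N$ so that $\mathbf{E}$ stabilises at $s+1$ from $N$ onwards, and choose some $n \geq N$.

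The first substantial step is to rewrite $\varphi_n$ using the chain of identifications
\[
\Hom_{\Mod\dashmodule \mathcal{S}}\bigl(\Yoneda(E_n), E\bigr) \;=\; E(E_n) \;=\; \varinjlim_{k} \Hom_{\mathcal{S}}(E_n, F_k),
\]
which follows from Yoneda's lemma together with $E \simeq \moco \mathbf{F}$ and the pointwise computation of module colimits. Hence $\varphi_n$ corresponds to a class represented by some morphism $\beta_n : E_n \to F_{k_n}$ in $\mathcal{S}$, and this amounts to the factorisation $\varphi_n = \psi_{k_n} \circ \Yoneda(\beta_n)$.

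Next I will apply Lemma~\ref{L:BasicFactorisationProperty} to the cohomological compactly supported functor $F = \Yoneda(F_{k_n})$ in order to lift $\Yoneda(\beta_n)$ to a unique morphism $\tilde\beta_n : E \to \Yoneda(F_{k_n})$ with $\tilde\beta_n \circ \varphi_n = \Yoneda(\beta_n)$. Composing with $\psi_{k_n}$ I obtain
\[
(\psi_{k_n} \circ \tilde\beta_n) \circ \varphi_n \;=\; \psi_{k_n} \circ \Yoneda(\beta_n) \;=\; \varphi_n \;=\; \id_E \circ \varphi_n,
\]
and the uniqueness half of Lemma~\ref{L:BasicFactorisationProperty}, this time applied to the cohomological compactly supported functor $F = E$, forces $\psi_{k_n} \circ \tilde\beta_n = \id_E$. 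Thus $\tilde\beta_n$ splits $\psi_{k_n}$ and realises $E$ as a direct summand of $\Yoneda(F_{k_n}) \in \Yoneda(\mathcal{S})$. The only point requiring genuine care is the verification that $E$ is cohomological and that the representables $\Yoneda(F_k)$ are compactly supported at $s$; both are immediate from the hypotheses and from exactness of filtered colimits of abelian groups, so I do not anticipate any further obstacle.
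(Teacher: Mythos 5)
Your proof is correct and follows essentially the same route as the paper: factor the colimit injection $\varphi_n$ through some $\Yoneda(F_{k})$ via Yoneda's lemma and $E\simeq\moco\mathbf{F}$, then use Lemma~\ref{L:BasicFactorisationProperty} (applied to the compactly supported cohomological functors $\Yoneda(F_{k})$ and $E$) to lift and to force the resulting endomorphism of $E$ to be the identity, splitting $\psi_{k}$. The only cosmetic difference is that you state the lift $\tilde\beta_n$ explicitly where the paper factors $g$ through $\varphi_N$ and invokes uniqueness of the factorisation; the content is identical.
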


\begin{proof}
By Lemma~\ref{L:BasicFactorisationProperty}, there exists \(N\in\N\) such that
for all \(n\geq N\) and all
\[
X\in\{\Yoneda(F_m):m\in\N\}\cup\{E\}
\]
every map \(\Yoneda(E_n)\rightarrow X\) in \(\Mod\dashmodule\mathcal{T}\) factors uniquely through the~colimit injection \(\varphi_n:\Yoneda(E_n)\xrightarrow{} E\).

By Yoneda's lemma and \(E\simeq\moco \mathbf{F}\), we get \(\varphi_N:\Yoneda(E_N)\rightarrow E\) factors
as \(\Yoneda(E_N)\xrightarrow{g}\Yoneda(F_M)\xrightarrow{\psi_M}E\) for some \(M\in\N\) where \(\psi_M\) is the colimit injection.
By the choice of $N$, the map $g$ factors as \(\Yoneda(E_N)\xrightarrow{\varphi_N}E\xrightarrow{h}\Yoneda(F_M)\).
But then the~equality
\(\id_E\varphi_N=\psi_Mg=\psi_Mh\varphi_N\) gives us two factorisations of the same map \(\Yoneda(E_N)\rightarrow E\) through \(\varphi_M\). Since this factorisation is unique, we obtain \(\id_E=\psi_Mh\), thus exhibiting $E$ as a direct summand of \(\Yoneda(F_M)\).
\end{proof}

Once again, the above lemma translates into the setting of good extensions. In particular, if \(\mathcal{T}\) is a triangulated category with coproducts and \(\mathcal{T}^c\xhookrightarrow{}\mathcal{T}\) the~corresponding good extension, then for any sequence \(\mathbf{E}\) satisfying the conditions of Lemma~\ref{L:SequenceOfComplactlySupportedElements}, it necessiarily holds that \(\hoco\mathbf{E}\in\mathcal{T}^c\subseteq\mathcal{T}\) because \(\mathcal{T}^c\) is idempotent complete.

\subsection{The computation}
As every hereditary commutative noetherian ring is a~direct product of Dedekind domains, we can reduce all the computations for completions to Dedekind domains only - in a general case we can proceed componentwise. 
Therefore, through this whole section we shall work with a~De\-de\-kind domain $D$ and \(\mathcal{S}:=\derived^b(\modf\dashmodule D)\simeq\perfect(D)\) its bounded derived category. We denote \(\mathcal{T}:=\derived(D)\) and note that the inclusion \(\mathcal{S}\xhookrightarrow{}\mathcal{T}\) is a~good extension, so all (pre)completions will be calculated inside \(\mathcal{T}\).

We shall denote \(\mathfrak{T}\subseteq\mathcal{S}\) the thick subcategory generated by all finitely generated torsion $D$-modules.

\begin{lemma}
\label{L:UncountablyGeneratedCompletion}
Let \(\mathcal{D}\subsetneq\mathcal{C}\) be thick subcategories of \(\mathcal{S}\).
Then:
\begin{enumerate}[label=(\roman*)]
\item It holds that \(\mathfrak{S}'_{\mathcal{C}}(\mathcal{S})\cap\mathfrak{S}'_{\mathcal{D}}(\mathcal{S})\subseteq\mathcal{C}^{\perp}\cap\mathfrak{T}\).
\item If \(\mathcal{C}\) is not countably generated, then \(\mathfrak{S}'_{\mathcal{C}}(\mathcal{S})=\mathcal{C}^{\perp}\cap\mathfrak{T}\).
\end{enumerate}
\end{lemma}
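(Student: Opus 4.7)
The approach for part~(i) is to combine the Dedekind structure theorem with support-theoretic rigidity for projective modules. Since $\mathcal{C}^{\perp}=0$ when $\mathcal{C}=\mathcal{S}$, we may assume $\mathcal{C}\subsetneq\mathcal{S}$; by Theorem~\ref{T:TelescopeConjectureCommutativeNoetherian} the strict inclusion $\mathcal{D}\subsetneq\mathcal{C}$ corresponds to a strict inclusion of the associated specialisation closed subsets $\Phi_{\mathcal{D}}:=\Supp(\mathcal{D})\subsetneq\Phi_{\mathcal{C}}:=\Supp(\mathcal{C})$ of $\Spec(D)$, which consist only of closed points (since any specialisation closed subset containing the generic point is all of $\Spec(D)$), so $\mathcal{C},\mathcal{D}\subseteq\mathfrak{T}$. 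For $E\in\mathfrak{S}'_{\mathcal{C}}(\mathcal{S})\cap\mathfrak{S}'_{\mathcal{D}}(\mathcal{S})$, the containment $E\in\mathfrak{C}'_{\mathcal{C}}(\mathcal{S})=\mathcal{C}^{\perp}$ is immediate, and Corollary~\ref{C:CompletionUniversalLocalisation} places $E$ in $\perfect(D_{\mathcal{D}})$, where $D_{\mathcal{D}}$ is a connected Dedekind domain whose spectrum is $\Spec(D)\setminus\Phi_{\mathcal{D}}$. A routine computation from the two-term projective resolution of $D/\mathfrak{p}$ over $D$ shows that $E\in(D/\mathfrak{p})^{\perp}$ if and only if $\mathfrak{p}\notin\Supp_{D}(E)$, so $E\in\mathcal{C}^{\perp}$ translates to the support condition $\Supp_{D}(E)\cap\Phi_{\mathcal{C}}=\emptyset$.

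The critical step is to rule out projective cohomology summands of $E$. Via Theorem~\ref{T:DedekindFinitelyGeneratedModules}, decompose each finitely generated $D_{\mathcal{D}}$-module $H^{i}(E)$ as $P^{i}\oplus T^{i}$ with $P^{i}$ projective and $T^{i}$ torsion. If some $P^{i}\neq 0$, then since $\Spec(D_{\mathcal{D}})$ is connected, Lemma~\ref{L:ProjectiveSupportIsLocallyConstant} forces $\Supp_{D}(P^{i})=\Spec(D)\setminus\Phi_{\mathcal{D}}$; as $P^{i}$ is a direct summand of $E$, this yields $\Spec(D)\setminus\Phi_{\mathcal{D}}\subseteq\Supp_{D}(E)\subseteq\Spec(D)\setminus\Phi_{\mathcal{C}}$, i.e.\ $\Phi_{\mathcal{C}}\subseteq\Phi_{\mathcal{D}}$, contradicting $\Phi_{\mathcal{D}}\subsetneq\Phi_{\mathcal{C}}$. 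Thus every $P^{i}=0$, and the cohomologies of $E$ are finitely generated torsion $D_{\mathcal{D}}$-modules. Since modules of the form $D_{\mathcal{D}}/\mathfrak{q}^{k}$ coincide with $D/\mathfrak{p}^{k}$ for $\mathfrak{p}$ the preimage of $\mathfrak{q}$, these cohomologies are finitely generated torsion $D$-modules, and $E\in\mathfrak{T}$.

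For~(ii), inclusion ``$\supseteq$'' is immediate: any $E\in\mathcal{C}^{\perp}\cap\mathfrak{T}$ already lies in $\mathcal{S}$, is exhibited in $\mathfrak{L}'_{\mathcal{C}}(\mathcal{S})$ by the constant Cauchy sequence $(E,\id_{E})$, and lies in $\mathfrak{C}'_{\mathcal{C}}(\mathcal{S})=\mathcal{C}^{\perp}$ by hypothesis. For ``$\subseteq$'', write $E=\hoco \mathbf{E}\in\mathfrak{S}'_{\mathcal{C}}(\mathcal{S})$ for a Cauchy sequence $\mathbf{E}=(E_{n},e_{n})_{n\in\N}$ with $\cone e_{n}\in\mathcal{C}$, and set $\mathcal{D}:=\thick(\{\cone e_{n}:n\in\N\})\subseteq\mathcal{C}$. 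Then $\mathcal{D}$ is countably generated by construction, hence strictly contained in the uncountably generated $\mathcal{C}$; the same sequence witnesses $E\in\mathfrak{L}'_{\mathcal{D}}(\mathcal{S})$, while $E\in\mathcal{C}^{\perp}\subseteq\mathcal{D}^{\perp}=\mathfrak{C}'_{\mathcal{D}}(\mathcal{S})$, so $E$ lies in both completions and part~(i) concludes. The main obstacle is the projective-summand analysis in~(i): the strict inclusion $\Phi_{\mathcal{D}}\subsetneq\Phi_{\mathcal{C}}$ is exactly what produces the incompatible support constraints, explaining why~(i) requires $\mathcal{D}\subsetneq\mathcal{C}$ and why the uncountable generation of $\mathcal{C}$ in~(ii) is essential to ensure the auxiliary $\mathcal{D}$ remains strictly below $\mathcal{C}$.
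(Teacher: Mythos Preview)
Your proof is correct, and for part~(i) it follows a genuinely different route from the paper's.

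The paper proves~(i) at the level of the Cauchy sequence: starting from a sequence $\mathbf{E}$ with cones in $\mathcal{D}$, it applies Corollary~\ref{C:CummingsGratzTrivialisationForSequences} to replace $\mathbf{E}$ by a $\mathcal{C}$-trivial sequence, picks $\mathfrak{p}\in\Supp(\mathcal{C})\setminus\Supp(\mathcal{D})$, and shows that a projective summand in any entry $E_m$ would force $E\otimes^{\mathbf{L}}_{D}k(\mathfrak{p})\neq 0$ (since $\mathbf{E}\otimes^{\mathbf{L}}_{D}k(\mathfrak{p})$ is constant), contradicting $E\in\mathcal{C}^{\perp}$. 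Once every entry of $\mathbf{E}$ is seen to lie in $\mathcal{C}^{\perp}$, Lemma~\ref{L:SequenceOfComplactlySupportedElements} yields $E\in\mathcal{S}$, hence $E\in\mathfrak{T}$. You bypass the sequence entirely: Corollary~\ref{C:CompletionUniversalLocalisation} already places $E$ in $\perfect(D_{\mathcal{D}})$, and you run the projective-summand support argument directly on the cohomologies $H^{i}(E)$ over the connected Dedekind domain $D_{\mathcal{D}}$, then identify torsion $D_{\mathcal{D}}$-modules with torsion $D$-modules. This is more economical---it avoids both the trivialisation step and Lemma~\ref{L:SequenceOfComplactlySupportedElements}---but it leans on the structure of $D_{\mathcal{D}}$ as a Dedekind domain, which is specific to the commutative hereditary setting. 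The paper's sequence-level argument is designed to port to the tame-algebra case (compare Lemma~\ref{L:UncountablyGeneratedCompletionTame}), where no analogue of $\perfect(D_{\mathcal{D}})$ with such a clean module theory is available.

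For part~(ii) the two arguments coincide in spirit: produce a countably generated thick $\mathcal{D}\subsetneq\mathcal{C}$ with $E\in\mathfrak{S}'_{\mathcal{D}}(\mathcal{S})$ and apply~(i). The paper quotes the more general Lemma~\ref{L:LivingInCompletionGivenByThickSubcategory}, while your direct choice $\mathcal{D}=\thick(\{\cone e_{n}:n\in\N\})$ is simpler and suffices because the metric $\mathcal{C}$ is constant.
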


\begin{proof}
``Part i)''
Let \(E\in\mathfrak{S}'_{\mathcal{C}}(\mathcal{S})\cap\mathfrak{S}'_{\mathcal{D}}(\mathcal{S})\).
We find a sequence \(\mathbf{E}=(E_n,e_n)_{n\in\N}\) with \(E\simeq\hoco\mathbf{E}\) which is Cauchy with respect to \(\mathcal{D}\), and therefore with respect to \(\mathcal{C}\) as well. W.l.o.g.\ suppose \(\cone e_n\in\mathcal{D}\) for all \(n\in\N\).
By Corollary~\ref{C:CummingsGratzTrivialisationForSequences}, we can also w.l.o.g.\ assume that \(\mathbf{E}\) is \(\mathcal{C}\)-trivial.
Theorem~\ref{T:TelescopeConjectureCommutativeNoetherian} yields \(\Supp(\mathcal{D})\subsetneq\Supp(\mathcal{C})\).
We pick a prime ideal \(\mathfrak{p}\in\Supp(\mathcal{C})\setminus\Supp(\mathcal{D})\). Then \(\mathcal{D}\otimes^{\mathbf{L}}_Dk(\mathfrak{p})=0\).

Assume for contradiction that there exists \(m\in\N\) and \(i\in\Z\) such that \(H^i(E_m)\) has a projective direct summand.
Since $D$ is connected, Lemma~\ref{L:ProjectiveSupportIsLocallyConstant} gives us \(\Supp(E_m)=\Spec(D)\).
As \(\blank\otimes^{\mathbf{L}}_Dk(\mathfrak{p})\) annihilates \(\cone e_n\) for all \(n\in\N\), the sequence \(\mathbf{E}\otimes^{\mathbf{L}}_Dk(\mathfrak{p})\) in \(\derived(D)\) is constant, so
\[
E\otimes^{\mathbf{L}}_Dk(\mathfrak{p})\simeq\hoco\mathbf{E}\otimes^{\mathbf{L}}_Dk(\mathfrak{p})\simeq E_m\otimes^{\mathbf{L}}_Dk(\mathfrak{p})\neq0
.\]
But that is impossible because \(\Supp\big(\mathfrak{C}'_{\mathcal{C}}(\mathcal{S})\big)=\Supp(D_{\mathcal{C}})=\Spec(D)\setminus\Supp(\mathcal{C})\) by~\cite[Lemma~3.5]{Neeman92A}.

We now know that no entry of \(\mathbf{E}\) contains a shift of a projective module as a~direct summand. Because \(\mathbf{E}\) is also \(\mathcal{C}\)-trivial, we get by Theorem~\ref{T:DedekindFinitelyGeneratedModules} that every entry of \(\mathbf{E}\) is supported at \(\Supp(D_{\mathcal{C}})\).
In other words, every entry of \(\mathbf{E}\) is
compactly supported at \(\mathcal{C}\). Taking \(\mathbf{F}=\mathbf{E}\) in Lemma~\ref{L:SequenceOfComplactlySupportedElements} then gives us that \(E\in\mathcal{S}\). We already know that \(\Supp(E)\neq\Spec(D)\), so $E$ cannot contain a shift of a projective module as a direct summand. Thus we may conclude \(E\in\mathcal{C}^{\perp}\cap\mathfrak{T}\).

``Part ii)'' Assume furthermore that \(\mathcal{C}\) is not countably generated. Let us have \(E\in\mathfrak{S}'_{\mathcal{C}}(\mathcal{S})\).
By Lemma~\ref{L:LivingInCompletionGivenByThickSubcategory}, there exists a countably generated thick subcategory \(\mathcal{D}\subseteq\mathcal{C}\) such that \(E\in\mathfrak{S}'_{\mathcal{D}}(\mathcal{S})\). Since \(\mathcal{C}\) is not countably generated, we have \(\mathcal{D}\subsetneq\mathcal{C}\). Hence part i) applies, so \(E\in\mathcal{C}^{\perp}\cap\mathfrak{T}\).
This proves \(\mathfrak{S}'_{\mathcal{C}}(\mathcal{S})\subseteq\mathcal{C}^{\perp}\cap\mathfrak{T}\).
However, the~other inclusion \(\mathcal{C}^{\perp}\cap\mathfrak{T}\subseteq\mathfrak{S}'_{\mathcal{C}}(\mathcal{S})\) holds by Lemma~\ref{L:CompactlySupportedElementOfTheOriginalCategory}.
\end{proof}

We now formulate and prove the main result. Recall that for a good metric \(\mathcal{M}=\{B_n\}_{n\in\N}\), the intersection \(\mathcal{B}:=\bigcap_{n\in\N} B_n\) is a triangulated subcategory by \cite[Lemma~3.5]{Matousek26A}. Furthermore, if \(\mathcal{M}\) is additive, then \(\mathcal{B}\) is thick.

\begin{theorem}
\label{T:DedekindCompletions}
Equip \(\mathcal{S}\) with the standard $t$-structure \((\mathcal{S}^{\leq0},\mathcal{S}^{>0})\) and an additive good metric \(\mathcal{M}=\{B_n\}_{n\in\N}\). Consider the~thick subcategory \(\mathcal{B}:=\bigcap_{n\in\N} B_n\) of~\(\mathcal{S}\). 
Then:
\begin{enumerate}[label=(\roman*)]
\item If \(\mathcal{M}=\mathcal{M}_{\infty}\vee\mathcal{B}\), and if \(\mathcal{B}\) is countably generated, then
\[
\mathfrak{S}'_{\mathcal{M}}(\mathcal{S})=\mathfrak{S}'_{\mathcal{B}}(\mathcal{S})=\derived^b(\modf\dashmodule D_{\mathcal{B}})\subseteq\derived(D)
.\]
\item Otherwise, we have \(\mathfrak{S}'_{\mathcal{M}}(\mathcal{S})=\mathcal{B}^{\perp}\cap\mathfrak{T}\subseteq\mathcal{S}\).
\end{enumerate}
\end{theorem}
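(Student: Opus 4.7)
The plan is to split the proof along the disjunction in the statement. Both case~(i) and the sub-case of~(ii) where $\mathcal{M} \sim \mathcal{M}_{\infty} \vee \mathcal{B}$ are handled uniformly: Proposition~\ref{P:DecompostionMetricsCompletion} collapses $\mathfrak{S}'_{\mathcal{M}}(\mathcal{S})$ onto the constant-metric completion $\mathfrak{S}'_{\mathcal{B}}(\mathcal{S})$, after which Proposition~\ref{P:CountablyGeneratedCompletion} identifies the latter with $\derived^b(\modf\dashmodule D_{\mathcal{B}})$ when $\mathcal{B}$ is countably generated, while Lemma~\ref{L:UncountablyGeneratedCompletion}(ii) identifies it with $\mathcal{B}^{\perp} \cap \mathfrak{T}$ otherwise.

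The substantive work is the remaining sub-case $\mathcal{M} \not\sim \mathcal{M}_{\infty} \vee \mathcal{B}$. For the inclusion $\mathfrak{S}'_{\mathcal{M}}(\mathcal{S}) \subseteq \mathcal{B}^{\perp} \cap \mathfrak{T}$, take $E$ in the completion: compact support at some $B_n \supseteq \mathcal{B}$ forces $E \in \mathcal{B}^{\perp}$, Proposition~\ref{P:BoundedHereditaryCompletion} gives boundedness, and Proposition~\ref{P:CompletionIsIdempotentComplete} applied to the hereditary decomposition $E \simeq \bigoplus_i \Sigma^{-i} H^i(E)$ reduces the remaining task to showing every module $F \in \mathfrak{S}'_{\mathcal{M}}(\mathcal{S})$ lies in $\mathfrak{T}$. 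Lemma~\ref{L:LivingInCompletionGivenByThickSubcategory} produces a countably generated thick subcategory $\mathcal{D} \subseteq \thick\big(H^0(B_s)\big)$ with $F \in \mathfrak{S}'_{\mathcal{D}}(\mathcal{S})$; the plan is then to enlarge $\mathcal{D}$ to a strictly larger thick $\mathcal{C} := \thick(\mathcal{D} \cup \{X_n\})$ using an obstructing module $X_n \notin \mathcal{D}$ supplied by Lemma~\ref{L:ObstructingObjects}. The vanishing $\Hom_{\derived(D)}(\Sigma^{j} X_n, F) = 0$ places $F$ in $\mathcal{C}^{\perp}$, while the Cauchy sequence witnessing $F \in \mathfrak{S}'_{\mathcal{D}}(\mathcal{S})$ remains Cauchy with respect to the coarser constant metric $\mathcal{C}$, so that Lemma~\ref{L:UncountablyGeneratedCompletion}(i) delivers $F \in \mathcal{C}^{\perp} \cap \mathfrak{T}$ and in particular $F \in \mathfrak{T}$.

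For the reverse inclusion $\mathcal{B}^{\perp} \cap \mathfrak{T} \subseteq \mathfrak{S}'_{\mathcal{M}}(\mathcal{S})$ in this sub-case, take $E \in \mathcal{B}^{\perp} \cap \mathfrak{T}$; by Theorem~\ref{T:DedekindFinitelyGeneratedModules}, $E$ is a finite direct sum of shifts of modules of the form $D/\mathfrak{p}^{l}$ with $\mathfrak{p}$ outside $\Supp(\mathcal{B})$. Via Lemma~\ref{L:CompactlySupportedElementOfTheOriginalCategory} (transferred through the good extension $\mathcal{S} \hookrightarrow \derived(D)$), it suffices to exhibit $n$ with $B_n \subseteq {}^{\perp} E$; combining Lemma~\ref{L:BruningWideTheoremForMetrics} (which controls $\wide(H^0(B_n))$ inside $B_{n-2}$) with the Dedekind classification of finitely generated modules as projective plus torsion localises the analysis, and the finite support of $E$ together with $E \in \mathcal{B}^{\perp}$ yields the required index. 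The principal obstacle is the $\subseteq$ direction of sub-case $\mathcal{M} \not\sim \mathcal{M}_{\infty} \vee \mathcal{B}$, specifically guaranteeing that some obstructing $X_n$ from Lemma~\ref{L:ObstructingObjects} can always be chosen outside the countably generated $\mathcal{D}$ of Lemma~\ref{L:LivingInCompletionGivenByThickSubcategory}, thereby licensing the strict enlargement $\mathcal{D} \subsetneq \mathcal{C}$ needed for Lemma~\ref{L:UncountablyGeneratedCompletion}(i).
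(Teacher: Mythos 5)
Your plan reproduces the paper's architecture exactly: case (i) and the sub-case $\mathcal{M}=\mathcal{M}_{\infty}\vee\mathcal{B}$ of (ii) via Proposition~\ref{P:DecompostionMetricsCompletion} followed by Proposition~\ref{P:CountablyGeneratedCompletion} resp.\ Lemma~\ref{L:UncountablyGeneratedCompletion}(ii), and the remaining sub-case by playing Lemmas~\ref{L:LivingInCompletionGivenByThickSubcategory}, \ref{L:ObstructingObjects} and \ref{L:UncountablyGeneratedCompletion}(i) against each other. The step you label the principal obstacle is, however, left open, and it is precisely the point where the order of operations matters. With your ordering (fix a countably generated $\mathcal{D}\subseteq\thick\big(H^0(B_s)\big)$ first, then look for an obstructing object outside it) there is no reason any $X_n$ should miss $\mathcal{D}$. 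The paper closes this by exploiting the quantifier ``for every $t$'' in Lemma~\ref{L:LivingInCompletionGivenByThickSubcategory} and the nesting of the balls: first produce $s$ with $E\in B_s^{\perp}$ and $\mathcal{E}\subseteq\thick\big(H^0(B_s)\big)$, $E\in\mathfrak{S}'_{\mathcal{E}}(\mathcal{S})$; then invoke Lemma~\ref{L:ObstructingObjects} (replacing $\mathcal{M}$ by the equivalent metric it supplies) to get a single module $X\in B_s$ with $X\notin\thick\big(H^0(B_{s+1})\big)$ and $\Hom_{\derived(D)}(\Sigma^jX,E)=0$ for all $j\in\Z$; and only \emph{then} apply Lemma~\ref{L:LivingInCompletionGivenByThickSubcategory} a second time, at depth $t=s+1$, to obtain $\mathcal{D}\subseteq\thick\big(H^0(B_{s+1})\big)$ with $E\in\mathfrak{S}'_{\mathcal{D}}(\mathcal{S})$. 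Then $X\notin\mathcal{D}$ is automatic, $\mathcal{C}:=\thick\big(\mathcal{D}\cup\mathcal{E}\cup\{X\}\big)\subseteq{}^{\perp}E$ strictly contains $\mathcal{D}$, $E\in\mathfrak{S}'_{\mathcal{C}}(\mathcal{S})\cap\mathfrak{S}'_{\mathcal{D}}(\mathcal{S})$, and Lemma~\ref{L:UncountablyGeneratedCompletion}(i) gives $E\in\mathcal{C}^{\perp}\cap\mathfrak{T}\subseteq\mathfrak{T}$, with $E\in\mathcal{B}^{\perp}$ coming from compact support. So the gap is repairable, but the repair is the reordering, not a strengthening of either lemma.

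Two smaller remarks. Your reduction to modules via Proposition~\ref{P:CompletionIsIdempotentComplete} is harmless but unnecessary; the paper runs the argument directly on $E$. In the inclusion $\mathcal{B}^{\perp}\cap\mathfrak{T}\subseteq\mathfrak{S}'_{\mathcal{M}}(\mathcal{S})$ your one-line sketch should be expanded to the paper's actual mechanism: reduce to an indecomposable torsion module $T\neq0$ supported at a single prime $\mathfrak{p}$ and concentrated in degree zero, pick $n$ with $T,\Sigma^{-1}T\notin B_n$ (possible since $T\in\mathcal{B}^{\perp}$ forces $T\notin\mathcal{B}$), and argue by contradiction that any $X\in H^0(B_{n+2})$ with $\Hom_{\derived(D)}(X,T)\neq0$ has $\mathfrak{p}\in\Supp(X)$, hence $T\in\wide(X)\subseteq B_n$ by Lemma~\ref{L:BruningWideTheoremForMetrics}; together with the vanishing of maps from $\mathcal{S}^{\leq-1}$ and $\mathcal{S}^{>1}$ into $T$ this shows $T$ is compactly supported, and Lemma~\ref{L:CompactlySupportedElementOfTheOriginalCategory} finishes. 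The ``required index'' comes from $T\notin\mathcal{B}$, not directly from the finiteness of the support, which instead feeds the $\wide(X)$ step.
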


\begin{proof}
If the condition of ``case i)'' happens, the result follows from Proposition~\ref{P:DecompostionMetricsCompletion} and Proposition~\ref{P:CountablyGeneratedCompletion}.
For the rest of the proof, we then assume that the condition of~``case i)'' is not satisfied. There are two possible explanations.

The first possibility is that \(\mathcal{M}=\mathcal{M}_{\infty}\vee\mathcal{B}\) and \(\mathcal{B}\) is not countably generated.
Then we obtain
\[
\mathfrak{S}'_{\mathcal{M}}(\mathcal{S})=\mathfrak{S}'_{\mathcal{B}}(\mathcal{S})=\mathcal{B}^\perp\cap\mathfrak{T}
\]
by Proposition~\ref{P:DecompostionMetricsCompletion} and Lemma~\ref{L:UncountablyGeneratedCompletion}.

All we are left with is to prove \(\mathfrak{S}'_{\mathcal{M}}(\mathcal{S})=\mathcal{B}^\perp\cap\mathfrak{T}\) when \(\mathcal{M}\gneq\mathcal{M}_{\infty}\vee\mathcal{B}\).

``$\supseteq$''
Let \(T\in\mathcal{B}^\perp\cap\mathfrak{T}\).
We can w.l.o.g.\ assume that \(0\neq T\) is an indecomposable torsion module supported at a single prime ideal \(\mathfrak{p}\in\Spec(D)\) concentrated in degree zero, otherwise we just build $T$ as an~extension of suitable shifts of indecomposable summands of its cohomologies.

By~Lemma~\ref{L:CompactlySupportedElementOfTheOriginalCategory}, it is enough to prove that \(T\in\mathfrak{C}'_{\mathcal{M}}(\mathcal{S})\).
Using \(T\notin \mathcal{B}\), we find \(n\in\N\) such that \(T,\Sigma^{-1}T\notin B_n\).
If there existed \(X\in H^0(B_{n+2})\) such that \(\Hom_{\derived(D)}(X,T)\neq0\), then \(\mathfrak{p}\in\Supp(X)\), so \(T\in\wide(X)\subseteq B_n\) by Lemma~\ref{L:BruningWideTheoremForMetrics}, which is impossible.
Analogously, it is possible to show that there does not exist \(X\in H^1(B_{n+2})\) such that \(\Hom_{\derived(D)}(\Sigma^{-1}X,T)\neq0\).

This means \(H^0(B_{n+1}),\Sigma^{-1}H^1(B_{n+1})\subseteq{}^{\perp}T\).
Since there are no non-zero morphisms from \(\mathcal{S}^{\leq-1}\) and \(\mathcal{S}^{>1}\) to $T$, we see that $T$ is compactly supported at \(n+2\). This finishes the proof of this inclusion.

``$\subseteq$''
Let \(E\in\mathfrak{S}'_{\mathcal{M}}(\mathcal{S})\). By Lemma~\ref{L:LivingInCompletionGivenByThickSubcategory}, there exist \(s\in\N\) and a thick subcategory \(\mathcal{E}\subseteq\thick\big(H^0(B_s)\big)\) such that \(E\in B_s^{\perp}\) and \(E\in\mathfrak{S}'_{\mathcal{E}}(\mathcal{S})\).

Since \(\mathcal{M}\gneq\mathcal{M}_{\infty}\vee\mathcal{B}\), we can by Lemma~\ref{L:ObstructingObjects} w.l.o.g.\ (up to replacing \(\mathcal{M}\) by an~equivalent metric) assume, that there exists a $D$-module \(X\in B_s\) such that
\[
X\notin\thick\big(H^0(B_{s+1})\big) \text{\ \ \ and\ \ \ }\Hom_{\derived(R)}(\Sigma^jX,E)=0 \text{\ \ \ for all\ \ \ } j\in\Z
.\]
We use Lemma~\ref{L:LivingInCompletionGivenByThickSubcategory} again to find a thick subcategory \(\mathcal{D}\subseteq\thick\big(H^0(B_{s+1})\big)\) with \(E\in\mathfrak{S}'_{\mathcal{D}}(\mathcal{S})\).
Then for \(\mathcal{C}:=\thick\big(\mathcal{D}\cup\mathcal{E}\cup\{X\}\big)\) we have \(\mathcal{D}\subsetneq\mathcal{C}\) and \(E\in\mathfrak{S}'_{\mathcal{C}}(\mathcal{S})\).
Lemma~\ref{L:UncountablyGeneratedCompletion} then gives us \(E\in\mathcal{C}^{\perp}\cap\mathfrak{T}\).
We finish the proof by observing that \(E\in\mathcal{B}^{\perp}\) because $E$ is compactly supported.
\end{proof}

\begin{example}
If \(D\) is a Dedekind domain of an uncountable spectrum, then \(\mathfrak{S}'_{\mathfrak{T}}(\mathcal{S})=0\).
\end{example}

\begin{example}
\label{E:CompletionsIntegers}
Set \(D=\Z\) and \(\mathcal{M}=\langle\Z/2\Z\rangle\) as a constant metric on~\(\mathcal{S}\). Then \(\mathfrak{S}'_{\mathcal{M}}(\mathcal{S})=\derived^b\left(\modf\dashmodule \Z[\frac{1}{2}]\right)\).
One of the possible Cauchy sequences expressing the~localisation ring \(\Z[\frac{1}{2}]\) as an element of \(\mathfrak{L}'_{\mathcal{M}}(\mathcal{S})\) is \(\Z\xrightarrow{\cdot2}\Z\xrightarrow{\cdot2}\cdots\). Similarly, for the~constant metric \(\mathfrak{T}\), we get \(\mathfrak{S}'_{\mathfrak{T}}(\mathcal{S})=\derived^b\left(\modf\dashmodule \Q\right)\).
But for a~non\=/constant metric \(\mathcal{N}=\left\{\left\langle\Z/p\Z:p\text{ prime}, p\geq n\right\rangle\right\}_{n\in\N}\) we get  
\(\mathfrak{S}'_{\mathcal{N}}(\mathcal{S})=\mathfrak{T}\).
\end{example}

\begin{example}
Let \(D\) be a Dedekind domain with \(\Pic(D)\) non-torsion, i.e.\ there exists a projective maximal ideal \(I\leq R\) such that \(I^n\) is not a principal ideal for all \(n\in\N\). There is an ideal \(J\leq R\) such that \(IJ=(a)\) is principal (generated by \(a\in D\)). Set \(\mathcal{M}=\langle D/I\rangle\). Then \(\mathfrak{S}'_{\mathcal{M}}(\mathcal{S})=\derived^b\left(\modf\dashmodule R_{\mathcal{M}}\right)\) and the universal localisation \(R_{\mathcal{M}}\) can be expressed as a homotopy colimit of the Cauchy sequence \(D\xrightarrow{\cdot a} J\xrightarrow{\cdot a}J^2\xrightarrow{\cdot a}\cdots\).
Unlike Example~\ref{E:CompletionsIntegers}, here, the universal localisation \(R_{\mathcal{M}}\) is not a classical localisation at a multiplicative subset of $D$ due to \cite[Chapitre~IV, Proposition~4.6]{Lazard69}.     
\end{example}

\section{Completions for tame algebras}
\label{sec:TameAlgebras}
In this final section, we present the second main result of this paper (Theorem~\ref{T:TameCompletions}) about completions of the bounded derived category of a~hereditary tame finite dimensional algebra.
The section follows the same structure as Section~\ref{sec:CommutativeNoetherian} but with the statements and proofs adjusted to the setting of finite dimensional algebras.

Through this whole section we work with an Euclidean quiver $Q$ and its path algebra \(KQ\) over an algebraically closed feld $K$.
Any connected hereditary tame finite dimensional algebra over $K$ is Morita equivalent to a path algebra over an~Euclidean quiver, so we may reduce our argumentation from the tame algebra to the~case of~$KQ$. 
We once again denote \(\mathcal{S}:=\derived^b(\modf\dashmodule KQ)\simeq\perfect(KQ)\), \(\mathcal{T}:=\derived(KQ)\) and note that the inclusion \(\mathcal{S}\xhookrightarrow{}\mathcal{T}\) is a good extension, so all (pre)completions will be calculated inside \(\mathcal{T}\). 

We recall that \(\mathcal{P}\), \(\mathcal{R}\), and \(\mathcal{Q}\) stand for the preprojective, regular, and preinjective components of \(\ind\dashmodule KQ\), respectively, and that \(\mathfrak{R}=\thick(\mathcal{R})\subseteq\mathcal{S}\).

\begin{lemma}
\label{L:UncountablyGeneratedCompletionTame}
Let \(\mathcal{D}\subsetneq\mathcal{C}\) be thick subcategories of \(\mathcal{S}\) generated by regular modules. Let \(S\in\ind\dashmodule KQ\) be a simple regular module from a homogenous tube.
Then:
\begin{enumerate}[label=(\roman*)]
\item If \(S\in\mathcal{C}\setminus\mathcal{D}\), then it holds that \(\mathfrak{S}'_{\mathcal{C}}(\mathcal{S})\cap\mathfrak{S}'_{\mathcal{D}}(\mathcal{S})\subseteq\mathcal{C}^{\perp}\cap\mathfrak{R}\).
\item If \(\mathcal{C}\) is not countably generated, then \(\mathfrak{S}'_{\mathcal{C}}(\mathcal{S})=\mathcal{C}^{\perp}\cap\mathfrak{R}\). 
\end{enumerate}
\end{lemma}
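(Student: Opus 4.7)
My plan is to mirror the proof of Lemma~\ref{L:UncountablyGeneratedCompletion}, with the tube decomposition of $\mathfrak{R}$ replacing the support stratification of $\Spec(D)$ and the simple regular $S$ playing the role of the prime $\mathfrak{p}$. For part~(i), given $E \in \mathfrak{S}'_{\mathcal{C}}(\mathcal{S}) \cap \mathfrak{S}'_{\mathcal{D}}(\mathcal{S})$, I would first apply Corollary~\ref{C:CummingsGratzTrivialisationForSequences} and Corollary~\ref{C:UniformBoundedCohomologyCauchySequenceRestrictionGeneralised} to produce a sequence $\mathbf{E}=(E_n,e_n)_{n\in\N}$ with $E\simeq\hoco\mathbf{E}$ that is Cauchy with respect to~$\mathcal{D}$, has $\cone e_n\in\mathcal{D}$ for all $n$, is $\mathcal{C}$-trivial, and has uniformly bounded cohomology. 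The structural cornerstone is that $S$ lies in a homogenous tube~$\mathbf{t}_0$ which has only two thick subcategories; combined with $S\notin\mathcal{D}$ and the tube decomposition $\mathcal{D}=\bigoplus_i\mathcal{D}^{(i)}$ of thick subcategories of~$\mathfrak{R}$, this forces $\mathcal{D}^{(0)}=0$. By mutual orthogonality of tubes this yields $\Hom_{\mathcal{T}}(\Sigma^k S,\cone e_n)=0$ for every $k\in\Z$ and every $n$, so each transition $e_n$ is an isomorphism on $\Hom_{\mathcal{T}}(\Sigma^k S,\blank)$; passing to the colimit and using $\Hom_{\mathcal{T}}(\Sigma^k S,E)=0$ (from $S\in\mathcal{C}$ and $E\in\mathcal{C}^\perp$) gives $\Hom_{\mathcal{T}}(\Sigma^k S,E_n)=0$ for all $n,k$.

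Next, I invoke Lemma~\ref{L:SincereModule}: any preprojective summand $\Sigma^j P$ of $E_n$ would contribute $\Hom_{\mathcal{T}}(\Sigma^{j-1}S,\Sigma^j P)=\Ext^1_{KQ}(S,P)\neq0$ to the vanishing group $\Hom_{\mathcal{T}}(\Sigma^{j-1}S,E_n)$, and symmetrically any preinjective summand $\Sigma^j M$ would contribute $\Hom_{KQ}(S,M)\neq0$. Since $KQ$ is hereditary this forces every $E_n$ to lie in $\mathfrak{R}$. The $\mathcal{C}$-triviality additionally kills the component $E_n^{(i)}$ for each tube with $\mathcal{C}^{(i)}=\thick(\mathbf{t}_i)$, in particular $E_n^{(0)}=0$. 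Using that $\hoco$ commutes with direct sums and that the subcategories $\Loc(\mathbf{t}_i)$ in~$\mathcal{T}$ are mutually orthogonal, I decompose $E=\bigoplus_i E^{(i)}$ with $E^{(i)}=\hoco\mathbf{E}^{(i)}$ and analyse each tube separately: tubes with $\mathcal{D}^{(i)}=0$ give constant sequences so that $E^{(i)}\in\thick(\mathbf{t}_i)$; tubes with $\mathcal{C}^{(i)}=\thick(\mathbf{t}_i)$ contribute $E^{(i)}=0$; the remaining cases are the at most three non-homogenous tubes, where the finite lattice of thick subcategories of $\thick(\mathbf{t}_i)$ (together with Lemma~\ref{L:InfiniteChainDiffersAtHomogenousTubes} and the representation-finite completion argument of~\cite[Section~3]{Matousek24} applied internally to the tube) confines $E^{(i)}$ to~$\thick(\mathbf{t}_i)$. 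Assembling across tubes yields $E\in\mathfrak{R}$, and combined with the automatic $E\in\mathcal{C}^\perp$ this proves~(i).

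For part~(ii), since there are only finitely many non-homogenous tubes and each individual tube is countably generated as a thick subcategory, a thick subcategory of~$\mathfrak{R}$ generated by regulars fails to be countably generated precisely when it contains uncountably many homogenous tubes in their entirety. Given $E\in\mathfrak{S}'_{\mathcal{C}}(\mathcal{S})$, Lemma~\ref{L:LivingInCompletionGivenByThickSubcategory} provides a countably generated thick subcategory $\mathcal{D}\subseteq\mathcal{C}$ with $E\in\mathfrak{S}'_{\mathcal{D}}(\mathcal{S})$; the countability of $\mathcal{D}$ forces $\mathcal{D}\subsetneq\mathcal{C}$ and allows me to pick $S$ to be the simple regular of any homogenous tube lying entirely in~$\mathcal{C}$ but disjoint from~$\mathcal{D}$, after which part~(i) delivers $E\in\mathcal{C}^\perp\cap\mathfrak{R}$. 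The reverse containment $\mathcal{C}^\perp\cap\mathfrak{R}\subseteq\mathfrak{S}'_{\mathcal{C}}(\mathcal{S})$ is immediate from Lemma~\ref{L:CompactlySupportedElementOfTheOriginalCategory} applied to the constant Cauchy sequence. The hard part is precisely the non-homogenous tube analysis in part~(i): the $\mathcal{C}$-triviality alone does not prevent a Cauchy sequence with cones in a proper $\mathcal{D}^{(i)}\subsetneq\thick(\mathbf{t}_i)$ from converging to a Prüfer-type infinitely generated module, and ruling this out relies crucially on the finiteness of the lattice of thick subcategories of a non-homogenous tube.
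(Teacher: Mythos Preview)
Your setup for part~(i)---choosing a $\mathcal{C}$-trivial Cauchy sequence $\mathbf{E}$ with cones in $\mathcal{D}$, using tube orthogonality to get $\Hom_{\mathcal{T}}(\Sigma^k S,E_n)=0$ for all $k,n$, and then invoking Lemma~\ref{L:SincereModule} to force every $E_n$ into $\mathfrak{R}$---matches the paper exactly, as does your treatment of part~(ii).

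The divergence is in what you do once $E_n\in\mathfrak{R}$. The paper does not decompose tube by tube. It argues in one stroke that, by $\mathcal{C}$-triviality and the mutual orthogonality of tubes, each $E_n$ already lies in $\mathcal{C}^{\perp}$; then it takes $\mathbf{F}=\mathbf{E}$ in Lemma~\ref{L:SequenceOfComplactlySupportedElements} to conclude that $E$ is a direct summand of an object of $\mathcal{S}$, hence $E\in\mathcal{S}$, and the same Sincere-module reasoning applied to $E$ itself gives $E\in\mathfrak{R}$. This bypasses the Pr\"ufer issue entirely and never needs to look inside a single tube.

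Your tube-by-tube route, by contrast, has a genuine gap in the non-homogenous case. The appeal to ``the representation-finite completion argument of~\cite[Section~3]{Matousek24} applied internally to the tube'' does not go through: a tube has infinitely many indecomposables, so the arguments of that section (which rely on there being only finitely many indecomposables, forcing every Cauchy sequence to become eventually constant up to summands) are not available. The finiteness of the lattice of thick subcategories of a tube is a much weaker statement and does not by itself prevent a hocolim from escaping $\thick(\tube_i)$; and Lemma~\ref{L:InfiniteChainDiffersAtHomogenousTubes} is a global statement about chains in~$\mathcal{S}$, not a tool for controlling colimits inside a fixed tube. You never actually rule out the Pr\"ufer-type limits you yourself flag as the hard point. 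The fix is simply to abandon the decomposition and use Lemma~\ref{L:SequenceOfComplactlySupportedElements} as the paper does.
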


\begin{proof}
``Part i)''
Let \(E\in\mathfrak{S}'_{\mathcal{C}}(\mathcal{S})\cap\mathfrak{S}'_{\mathcal{D}}(\mathcal{S})\).
We find a sequence \(\mathbf{E}=(E_n,e_n)_{n\in\N}\) with \(E\simeq\hoco\mathbf{E}\) which is Cauchy with respect to \(\mathcal{D}\), and therefore with respect to \(\mathcal{C}\) as well. W.l.o.g.\ suppose \(\cone e_n\in\mathcal{D}\) for all \(n\in\N\).
By Corollary~\ref{C:CummingsGratzTrivialisationForSequences}, we can also w.l.o.g.\ assume that \(\mathbf{E}\) is \(\mathcal{C}\)-trivial.

By the mutual orthogonality of tubes, we have \(\mathcal{D}\subseteq S^{\perp}\).
This means that \(\Hom_{\mathcal{S}}(S,\blank)\) annihilates \(\cone e_n\) for all \(n\in\N\), so the sequence \(\Hom_{\mathcal{S}}(S,\mathbf{E})\) is constant. Hence \(\Hom_{\mathcal{S}}(S,E_n)\simeq\varinjlim\Hom_{\mathcal{S}}(S,\mathbf{E})\simeq\Hom_{\mathcal{S}}(S,E)\simeq0\) for all \(n\in\N\). As $S$ is a simple regular from a homogenous tube, we have \(\Hom_{KQ}(S,Q)\neq0\) and \(\Ext^1_{KQ}(S,P)\neq0\) for all \(Q\in\mathcal{Q}\) and \(P\in\mathcal{P}\) by Lemma~\ref{L:SincereModule}. This implies that \(H^0(E)\) does not contain a preinjective direct summand, and \(H^{-1}(E)\) does not contain a preprojective direct summand.

By repeating the same argument for arbitrary shifts of $S$, we obtain that for all \(n\in\N\) the cohomology of \(E_n\) consists of regular modules only. We then even have \(H^i(E_n)\in\mathcal{C}^\perp\) for all \(n\in\N\) and \(i\in\Z\) by the \(\mathcal{C}\)-triviality of \(\mathbf{E}\) and the orthogonality of tubes.
Taking \(\mathbf{F}=\mathbf{E}\) in Lemma~\ref{L:SequenceOfComplactlySupportedElements} then gives us that \(E\in\mathcal{S}\). In the same manner as before, we can consequently deduce that $E$ cannot contain a shift of neither a preprojective, nor preinjective module as a direct summand. Thus we may conclude \(E\in\mathcal{C}^{\perp}\cap\mathfrak{R}\).

``Part ii)'' Assume furthermore that \(\mathcal{C}\) is not countably generated.
Then there exists some uncountable indexing set \(I\subseteq\ProjectiveLine{K}\) such that
\(
\mathcal{C}=\coprod_{i\in I}\coprod_{j\in\Z}\Sigma^j\tube'_i,
\)
where \(0\neq\tube'_i\) is a wide subcategory of the tube \(\tube_i\) and all but finitely many tubes \(\tube_i\) are homogenous.
Let us have \(E\in\mathfrak{S}'_{\mathcal{D}}(\mathcal{S})\).
By Lemma~\ref{L:LivingInCompletionGivenByThickSubcategory}, there exists a countably generated thick subcategory \(\mathcal{D}\subseteq\mathcal{C}\) such that \(E\in\mathfrak{S}'_{\mathcal{D}}(\mathcal{S})\).
Here \(\mathcal{D}\) is necessiarily contained in a thick subcategory generated by countably many tubes.  
Since \(I\) is uncountable, we can find \(i\in I\) such that \(\tube_i=\tube_i'\) is homogenous and \(\tube_i\cap\,\mathcal{D}=0\). Hence part i) applies, so \(E\in\mathcal{C}^{\perp}\cap\mathfrak{R}\).
This proves \(\mathfrak{S}'_{\mathcal{C}}(\mathcal{S})\subseteq\mathcal{C}^{\perp}\cap\mathfrak{R}\).
However, the~other inclusion \(\mathcal{C}^{\perp}\cap\mathfrak{R}\subseteq\mathfrak{S}'_{\mathcal{C}}(\mathcal{S})\) holds by Lemma~\ref{L:CompactlySupportedElementOfTheOriginalCategory}.
\end{proof}

\begin{theorem}
\label{T:TameCompletions}
Equip \(\mathcal{S}\) with the standard $t$-structure \((\mathcal{S}^{\leq0},\mathcal{S}^{>0})\) and an additive good metric \(\mathcal{M}=\{B_n\}_{n\in\N}\). Consider the~thick subcategory \(\mathcal{B}:=\bigcap_{n\in\N} B_n\) of~\(\mathcal{S}\). 
Then:
\begin{enumerate}[label=(\roman*)]
\item If \(\mathcal{M}=\mathcal{M}_{\infty}\vee\mathcal{B}\), and if \(\mathcal{B}\) is countably generated, then
\[
\mathfrak{S}'_{\mathcal{M}}(\mathcal{S})=\mathfrak{S}'_{\mathcal{B}}(\mathcal{S})=\derived^b(\modf\dashmodule KQ_{\mathcal{B}})\subseteq\derived(KQ)
.\]
\item Otherwise, we have \(\mathfrak{S}'_{\mathcal{M}}(\mathcal{S})=\mathcal{B}^{\perp}\cap\mathfrak{R}\subseteq\mathcal{S}\).
\end{enumerate}
\end{theorem}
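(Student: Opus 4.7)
Since the statement parallels Theorem~\ref{T:DedekindCompletions}, my plan is to adapt its proof, replacing the torsion subcategory $\mathfrak{T}$ by the regular subcategory $\mathfrak{R}$ and the Dedekind support-theoretic arguments by the tube-theoretic machinery developed in Section~\ref{sec:preliminaries} together with the lemmas established earlier in the present section. Case~(i) follows directly from Proposition~\ref{P:DecompostionMetricsCompletion} (which reduces $\mathcal{M}$ to the constant metric $\mathcal{B}$) combined with Proposition~\ref{P:CountablyGeneratedCompletion}. When $\mathcal{M}=\mathcal{M}_{\infty}\vee\mathcal{B}$ but $\mathcal{B}$ is not countably generated, Theorem~\ref{T:RegularOrExceptional} forces $\mathcal{B}\subseteq\mathfrak{R}$ — otherwise $\mathcal{B}$ would be generated by a finite exceptional sequence and hence countably generated — and the conclusion then follows from Proposition~\ref{P:DecompostionMetricsCompletion} together with Lemma~\ref{L:UncountablyGeneratedCompletionTame}(ii).

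The main work lies in the subcase $\mathcal{M}>\mathcal{M}_{\infty}\vee\mathcal{B}$. For the inclusion $\mathfrak{S}'_{\mathcal{M}}(\mathcal{S})\subseteq\mathcal{B}^{\perp}\cap\mathfrak{R}$, given $E\in\mathfrak{S}'_{\mathcal{M}}(\mathcal{S})$ I first use Lemma~\ref{L:LivingInCompletionGivenByThickSubcategory} to extract a countably generated thick subcategory $\mathcal{E}\subseteq\thick(H^{0}(B_{s}))$ with $E\in\mathfrak{S}'_{\mathcal{E}}(\mathcal{S})$, and then apply Lemma~\ref{L:ObstructingObjects} to pass to an equivalent metric $\{C_n\}_{n\in\N}$ and extract an infinite family of modules $(X_n)_{n\in\N}$ with $X_n\in C_n\setminus\thick(H^{0}(C_{n+1}))$ and $\Hom_{\derived(KQ)}(\Sigma^{j}X_n,E)=0$ for every $j\in\Z$. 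Setting
\[
\mathcal{C}_n:=\thick\bigl(\mathcal{E}\cup\{X_k:k\geq n\}\bigr)
\]
for $n$ large enough that $\mathcal{E}\subseteq\thick(H^{0}(C_n))$, one obtains a strictly decreasing chain $\mathcal{C}_n\supsetneq\mathcal{C}_{n+1}\supsetneq\cdots$ of thick subcategories, each containing $E$ in its completion as in the Dedekind proof; strictness holds because $\mathcal{C}_{n+1}\subseteq\thick(H^{0}(C_{n+1}))$ but $X_n\notin\thick(H^{0}(C_{n+1}))$. Applying Lemma~\ref{L:InfiniteChainDiffersAtHomogenousTubes} to a sufficiently long initial segment of this chain produces indices $i<j$ with $\mathcal{C}_i,\mathcal{C}_j\subseteq\mathfrak{R}$ and a homogeneous tube $\tube$ satisfying $\tube\subseteq\mathcal{C}_i$ and $\tube\cap\mathcal{C}_j=0$; for any simple regular $S\in\tube$, Lemma~\ref{L:UncountablyGeneratedCompletionTame}(i) then yields $E\in\mathcal{C}_i^{\perp}\cap\mathfrak{R}$, and the compact support of $E$ with respect to $\mathcal{M}$ gives $E\in\mathcal{B}^{\perp}$.

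For the reverse inclusion $\mathcal{B}^{\perp}\cap\mathfrak{R}\subseteq\mathfrak{S}'_{\mathcal{M}}(\mathcal{S})$, idempotent completeness (Proposition~\ref{P:CompletionIsIdempotentComplete}) combined with hereditarity reduces the task to showing that every indecomposable regular $T$ in some tube $\tube$ with $T\in\mathcal{B}^{\perp}$ lies in $\mathfrak{S}'_{\mathcal{M}}(\mathcal{S})$, and by Lemma~\ref{L:CompactlySupportedElementOfTheOriginalCategory} this further reduces to proving $B_n\subseteq{}^{\perp}T$ for some $n$. Following the Dedekind template I would pick $n$ with $T,\Sigma^{-1}T\notin B_n$ and attempt to show that no $X\in H^{0}(B_{n+2})$ admits a non-zero morphism to $T$ (and dually in $H^{1}$), using Lemma~\ref{L:BruningWideTheoremForMetrics} to bound $\wide(X)$ inside $B_n$. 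The main obstacle is that the Dedekind wide-closure trick does not transfer verbatim: an indecomposable preprojective summand $X\in B_{n+2}$ with $\Hom(X,T)\neq 0$ cannot be ruled out a priori when $T$ is a simple regular in a homogeneous tube, by Lemma~\ref{L:SincereModule}, and such an $X$ is exceptional with $\wide(X)=\coprodf(X)$, which typically does not contain $T$. Overcoming this will require supplementing the homogeneous-tube wide-closure argument inside $\tube$ — where $\wide(M)=\tube$ for every nonzero $M\in\tube$, closing the loop as in the Dedekind case — with a separate additive and extension-closed analysis of $B_n\cap\mathcal{P}$, exploiting that the hypothesis $T\in\mathcal{B}^{\perp}$ excludes $\mathcal{B}$ from both the preprojective component and the tube $\tube$, so that these contributions to $B_n$ must vanish for all sufficiently large~$n$.
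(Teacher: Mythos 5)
Your treatment of case (i) and of the subcase where \(\mathcal{M}=\mathcal{M}_{\infty}\vee\mathcal{B}\) with \(\mathcal{B}\) not countably generated agrees with the paper, and your overall plan for ``\(\subseteq\)'' in the remaining subcase is the right one, but both remaining steps have genuine gaps. The serious one is ``\(\supseteq\)'': you correctly identify that the Dedekind wide-closure trick does not rule out preprojectives \(X\in B_{n+2}\) with \(\Hom_{KQ}(X,T)\neq0\), but your proposed remedy --- a separate, preprojective-only ``additive and extension-closed analysis of \(B_n\cap\mathcal{P}\)'' showing these contributions vanish for large \(n\) --- is not a proof and is not the mechanism that makes the statement true. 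The point is that such preprojectives cannot be analysed in isolation: two preprojectives lying deep in the metric generate, via Lemma~\ref{L:BruningWideTheoremForMetrics} (cokernels of maps between them are regular), large wide subcategories, typically whole tubes, inside shallower balls. The paper's argument exploits exactly this: set \(\mathfrak{H}_n:=\{X\in\ind\dashmodule KQ: X\in B_{n+2},\ \Hom_{KQ}(X,T)\neq0\}\), note \(\wide(\mathfrak{H}_n)\subseteq B_n\), and consider the decreasing chain \(\thick(\mathfrak{H}_1)\supseteq\thick(\mathfrak{H}_2)\supseteq\cdots\). If it did not stabilise, Lemma~\ref{L:InfiniteChainDiffersAtHomogenousTubes} would produce two distinct homogeneous tubes not contained in \({}^{\perp}T\), impossible for an indecomposable regular \(T\) sitting in a single tube; once it stabilises, \(\mathfrak{H}_N\subseteq\mathcal{B}\), and \(T\in\mathcal{B}^{\perp}\) forces \(\mathfrak{H}_N\) to be empty, giving \(H^0(B_N)\subseteq{}^{\perp}T\) (and similarly in cohomological degree~\(1\)), whence \(T\in\mathfrak{C}'_{\mathcal{M}}(\mathcal{S})\) and Lemma~\ref{L:CompactlySupportedElementOfTheOriginalCategory} applies. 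Without an argument of this kind your ``\(\supseteq\)'' direction is simply open.

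In the ``\(\subseteq\)'' direction your chain is not guaranteed to be strictly decreasing. You take a single \(\mathcal{E}\subseteq\thick\big(H^0(B_s)\big)\) from Lemma~\ref{L:LivingInCompletionGivenByThickSubcategory} and then need \(\mathcal{E}\subseteq\thick\big(H^0(C_{n+1})\big)\) to conclude \(X_n\notin\mathcal{C}_{n+1}\); but the lemma only places \(\mathcal{E}\) inside \(\thick\big(H^0(B_s)\big)\) for the one index \(s\) it returns, and there is no reason that \(\mathcal{E}\subseteq\thick\big(H^0(C_n)\big)\) for any, let alone all large, \(n\) --- the generators of \(\mathcal{E}\) produced by that lemma need not lie in deeper balls, and if \(\mathcal{E}\) happened to contain \(X_n\) the strictness would fail outright. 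The fix (and the paper's route) is to apply Lemma~\ref{L:LivingInCompletionGivenByThickSubcategory} once per level: with \(M\) as in Lemma~\ref{L:InfiniteChainDiffersAtHomogenousTubes}, choose for each \(1\leq i\leq M\) a thick \(\mathcal{E}_i\subseteq\thick\big(H^0(B_i)\big)\) with \(E\in\mathfrak{S}'_{\mathcal{E}_i}(\mathcal{S})\), take only the finitely many obstructing modules \(X_1,\ldots,X_M\) from Lemma~\ref{L:ObstructingObjects}, and set \(\mathcal{C}_i:=\thick\big(\bigcup_{j=i}^{M}(\mathcal{E}_j\cup\{X_j\})\big)\); then \(\mathcal{C}_{i+1}\subseteq\thick\big(H^0(B_{i+1})\big)\) holds by construction, strictness follows from \(X_i\notin\thick\big(H^0(B_{i+1})\big)\), and Lemma~\ref{L:InfiniteChainDiffersAtHomogenousTubes} plus Lemma~\ref{L:UncountablyGeneratedCompletionTame}(i) finish as you intended. (A small additional remark: your appeal to Proposition~\ref{P:CompletionIsIdempotentComplete} for the reduction to indecomposables is unnecessary --- closure of \(\mathfrak{S}'_{\mathcal{M}}(\mathcal{S})\) under finite coproducts, shifts and extensions already suffices.)
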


\begin{proof}
If the condition of ``case i)'' happens, the result follows from Proposition~\ref{P:DecompostionMetricsCompletion} and Proposition~\ref{P:CountablyGeneratedCompletion}.
For the rest of the proof, we then assume that the condition of~``case i)'' is not satisfied. There are two possible explanations.

The first possibility is that \(\mathcal{M}=\mathcal{M}_{\infty}\vee\mathcal{B}\) and \(\mathcal{B}\) is not countably generated. As \(\mathcal{B}\) then cannot be generated by an exceptional sequence, it holds that \(\mathcal{B}\subseteq\mathfrak{R}\) by Theorem~\ref{T:RegularOrExceptional}.
Then we obtain
\[
\mathfrak{S}'_{\mathcal{M}}(\mathcal{S})=\mathfrak{S}'_{\mathcal{B}}(\mathcal{S})=\mathcal{B}^\perp\cap\mathcal{S}
\]
by Proposition~\ref{P:DecompostionMetricsCompletion} and Lemma~\ref{L:UncountablyGeneratedCompletionTame}.

All we are left with is to prove \(\mathfrak{S}'_{\mathcal{M}}(\mathcal{S})=\mathcal{B}^\perp\cap\mathfrak{R}\) when \(\mathcal{M}\gneq\mathcal{M}_{\infty}\vee\mathcal{B}\).

``$\supseteq$''
Let \(T\in\mathcal{B}^\perp\cap\mathfrak{R}\).
We can w.l.o.g.\ assume that \(0\neq T\) is an indecomposable regular module from a single tube concentrated in degree zero, otherwise we just build $T$ as an~extension of suitable shifts of indecomposable summands of its cohomologies.

By~Lemma~\ref{L:CompactlySupportedElementOfTheOriginalCategory}, it is enough to prove that \(T\in\mathfrak{C}'_{\mathcal{M}}(\mathcal{S})\).
For all \(n\in\N\) we define \(\mathfrak{H}_n:=\big\{X\in\ind\dashmodule KQ:X\in B_{n+2},\Hom_{KQ}(X,T)\neq0\big\}\).
By Lemma~\ref{L:BruningWideTheoremForMetrics}, for every \(n\in\N\) it holds that \(\wide(\mathfrak{H}_n)\subseteq B_n\).
Then we have an infinite decreasing chain \(\thick(\mathfrak{H}_1)\supseteq\thick(\mathfrak{H}_2)\supseteq\cdots\) of thick subcategories of \(\mathcal{S}\). If this chain did not stabilise, then a repeated application of Lemma~\ref{L:InfiniteChainDiffersAtHomogenousTubes} would give us two different homogenous tubes \(\mathbf{u},\mathbf{v}\not\subseteq{}^{\perp}T\), which is impossible for a regular module $T$.
Hence the chain of thick subcategories above stabilises, meaning that \(\mathfrak{H}_n=\mathfrak{H}_m\) for some \(N\in\N\) and all \(m,n\geq N\), and \(\mathfrak{H}_N\subseteq\mathcal{B}\). If \(\mathfrak{H}_N\) was not zero, then it could not hold that \(T\in\mathcal{B}^{\perp}\). Therefore, we obtain \(H^0(B_{N})\subseteq{}^{\perp}T\).
Analogously, it is possible to show that \(H^1(B_{M})\subseteq{}^{\perp}T\) for some \(M\in\N\).

Since there are no non-zero morphisms from \(\mathcal{S}^{\leq-1}\) and \(\mathcal{S}^{>1}\) to $T$, we see that $T$ is compactly supported at \(\max\{N,M\}\). This finishes the proof of this inclusion.

``$\subseteq$''
We find \(M\in\N\) such that for any strictly decreasing chain of thick subcategories of \(\mathcal{S}\) some homogenous regular tube must appear at one stage and disappear at a~later one as (more precisely) stated in Lemma~\ref{L:InfiniteChainDiffersAtHomogenousTubes}.

Let \(E\in\mathfrak{S}'_{\mathcal{M}}(\mathcal{S})\).
Since \(\mathcal{M}\gneq\mathcal{M}_{\infty}\vee\mathcal{B}\), we can by Lemma~\ref{L:ObstructingObjects} w.l.o.g.\ (up to replacing \(\mathcal{M}\) by an~equivalent metric) assume, that there exist $KQ$-modules \(X_1,\ldots,X_M\) such that for all \(1\leq i\leq M\) we have
\(
X_{i}\in B_{i}
,\)
\(
X\notin\thick\big(H^0(B_{i+1})\big)
\)
and
\(
\Hom_{\derived(R)}(\Sigma^jX_i,E)=0 \text{ for every } j\in\Z 
.\)

By Lemma~\ref{L:LivingInCompletionGivenByThickSubcategory}, there exist thick subcategories \(\mathcal{E}_1,\ldots,\mathcal{E}_M\) of \(\mathcal{S}\) satisfying for each \(1\leq i\leq M\) that \(\mathcal{E}_i\subseteq\thick\big(H^0(B_i)\big)\) and \(E\in\mathfrak{S}'_{\mathcal{E}_i}(\mathcal{S})\).
Then for each \(1\leq i\leq M\) we define
\(\mathcal{C}_i:=\thick\left(\bigcup_{j=i}^M\left(\mathcal{E}_j\cup\{X_j\}\right)\right)\), ensuring that \(E\in\mathfrak{S}'_{\mathcal{C}_i}(\mathcal{S})\).
Now we have a strictly decreasing chain \(\mathcal{C}_1\supsetneq\mathcal{C}_2\supsetneq\cdots\supsetneq\mathcal{C}_M\) of thick subcategories of \(\mathcal{S}\), so Lemma~\ref{L:InfiniteChainDiffersAtHomogenousTubes} provides us with indices \(1\leq i<j\leq M\) and a homogenous regular tube \(\tube\subseteq\modf\dashmodule KQ\) such that \(\mathcal{C}_i,\mathcal{C}_j\subseteq\mathfrak{R}\), \(\tube\subseteq\mathcal{C}_i\) and \(\tube\cap\,\mathcal{C}_j=0\).

The conditions of Lemma~\ref{L:UncountablyGeneratedCompletionTame}, part~i), are now satisfied. Thus \(E\in\mathcal{C}_i^{\perp}\cap\mathfrak{R}\). 
We finish the proof by observing that \(E\in\mathcal{B}^{\perp}\) because $E$ is compactly supported.
\end{proof}

We see that if we want the completion \(\mathfrak{S}'_{\mathcal{M}}(\mathcal{S})\) to contain new elements which are not from the original category \(\mathcal{S}\), then the good metric \(\mathcal{M}\) must be of the form \(\mathcal{M}=\mathcal{M}_{\infty}\vee\mathcal{B}\) with \(\mathcal{B}\) countably generated. However, this condition is not sufficient. 

\begin{corollary}
\label{C:FiniteDimensionalUniversalLocalisation}
Equip \(\mathcal{S}\) with the standard $t$-structure \((\mathcal{S}^{\leq0},\mathcal{S}^{>0})\) and an additive good metric \(\mathcal{M}=\{B_n\}_{n\in\N}\). Assume that the~thick subcategory \(\mathcal{B}:=\bigcap_{n\in\N} B_n\) of~\(\mathcal{S}\) is generated by an exceptional sequence in \(\modf\dashmodule KQ\). If \(\mathcal{M}=\mathcal{M}_{\infty}\vee\mathcal{B}\), then \(\mathfrak{S}'_{\mathcal{M}}(\mathcal{S})=\mathcal{B}^{\perp}\cap\mathcal{S}\).     
\end{corollary}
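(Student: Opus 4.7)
The plan is a direct application of Theorem~\ref{T:TameCompletions}(i), followed by an identification of the resulting universal localisation $KQ_{\mathcal{B}}$ with the path algebra of the perpendicular quiver. The two ingredients will then combine to force the completion to collapse back into $\mathcal{S}$.

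First, every exceptional sequence in $\modf\dashmodule KQ$ has length at most $N$, so $\mathcal{B}=\thick(X_1,\ldots,X_n)$ is finitely (a fortiori countably) generated. Combined with the hypothesis $\mathcal{M}=\mathcal{M}_\infty\vee\mathcal{B}$, case~(i) of Theorem~\ref{T:TameCompletions} immediately yields
\[
\mathfrak{S}'_{\mathcal{M}}(\mathcal{S}) = \mathfrak{S}'_{\mathcal{B}}(\mathcal{S}) = \derived^b(\modf\dashmodule KQ_{\mathcal{B}}).
\]
It remains to identify this common category with $\mathcal{B}^\perp\cap\mathcal{S}$. The inclusion $\mathcal{B}^\perp\cap\mathcal{S}\subseteq\mathfrak{S}'_{\mathcal{B}}(\mathcal{S})$ is the good-extension version of Lemma~\ref{L:CompactlySupportedElementOfTheOriginalCategory}, since any $E\in\mathcal{B}^\perp\cap\mathcal{S}$ automatically lies in $\mathfrak{C}'_{\mathcal{B}}(\mathcal{S})\cap\mathcal{S}$. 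For the reverse inclusion, $\derived^b(\modf\dashmodule KQ_{\mathcal{B}})\subseteq\mathcal{B}^\perp$ is immediate from the recollement of Proposition~\ref{P:HomologicalEpimorphismDeterminingCompletion}, so the only nontrivial point is to show $\derived^b(\modf\dashmodule KQ_{\mathcal{B}})\subseteq\mathcal{S}$, i.e.\ that $KQ_{\mathcal{B}}$ is finite-dimensional.

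This finite-dimensionality is the main obstacle, and it is precisely where the exceptional-sequence hypothesis enters. I would extract it from the perpendicular-category results already collected in Section~\ref{sec:preliminaries}: $\wide(X_1,\ldots,X_n)^\perp\subseteq\modf\dashmodule KQ$ is equivalent to $\modf\dashmodule KQ''$ for a finite acyclic quiver $Q''$ on $N-n$ vertices, and Brüning's bijection identifies $\mathcal{B}^\perp\cap\mathcal{S}$ with the finitely generated thick subcategory $\derived^b(\modf\dashmodule KQ'')\subseteq\mathcal{S}$. The already-established inclusion $\mathcal{B}^\perp\cap\mathcal{S}\subseteq\mathfrak{S}'_{\mathcal{B}}(\mathcal{S})=\perfect(KQ_{\mathcal{B}})$ then places the compact generator of $\derived^b(\modf\dashmodule KQ'')$ inside $\perfect(KQ_{\mathcal{B}})$; together with idempotent completeness (Proposition~\ref{P:CompletionIsIdempotentComplete}) and the final clause of Proposition~\ref{P:HomologicalEpimorphismDeterminingCompletion}, this forces $KQ_{\mathcal{B}}\simeq KQ''$ up to Morita equivalence. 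Hence $KQ_{\mathcal{B}}$ is finite-dimensional, $\derived^b(\modf\dashmodule KQ_{\mathcal{B}})\subseteq\mathcal{S}$, and the two inclusions merge to give $\mathfrak{S}'_{\mathcal{M}}(\mathcal{S})=\mathcal{B}^\perp\cap\mathcal{S}$, in sharp contrast with the regular-tube case of Theorem~\ref{T:TameCompletions}(i) where the localisation can be infinite-dimensional and new objects genuinely appear in the completion.
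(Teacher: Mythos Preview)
Your opening moves coincide with the paper's: invoke Theorem~\ref{T:TameCompletions}(i) to reduce to $\mathfrak{S}'_{\mathcal{B}}(\mathcal{S})=\derived^b(\modf\dashmodule KQ_{\mathcal{B}})$, then argue that $KQ_{\mathcal{B}}$ is finite-dimensional so that $\derived^b(\modf\dashmodule KQ_{\mathcal{B}})\subseteq\mathcal{S}$ and hence equals $\mathcal{B}^\perp\cap\mathcal{S}$. The difference is in how finite-dimensionality is obtained.

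The paper does not go through the perpendicular quiver at all. It observes that $H^0(\mathcal{B})=\wide(P)$ for a rigid module $P$ (any exceptional sequence has a rigid direct sum) and then cites \cite[Proposition~4.2]{KrauseStovicek10}, which states outright that the universal localisation at a wide subcategory generated by a rigid object is finite-dimensional. This is a one-line external input.

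Your route via $Q''$ is plausible but the final inference is not supported by the tools you name. Knowing that the compact generator of $\derived^b(\modf\dashmodule KQ'')\simeq\mathcal{B}^\perp\cap\mathcal{S}$ lies in $\perfect(KQ_{\mathcal{B}})$ only gives $\thick(KQ'')\subseteq\perfect(KQ_{\mathcal{B}})$; neither idempotent completeness of the completion nor the final clause of Proposition~\ref{P:HomologicalEpimorphismDeterminingCompletion} yields the reverse inclusion, since that clause merely re-states the equality $\mathfrak{S}'_{\mathcal{B}}(\mathcal{S})=\perfect(KQ_{\mathcal{B}})$ you already have and says nothing about $KQ_{\mathcal{B}}$ lying in $\mathcal{S}$. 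To close the gap you would need an extra argument, e.g.\ that the exceptional sequence for $\mathcal{B}$ together with one for $\mathcal{B}^\perp\cap\mathcal{S}$ is complete, so $\Loc\big(\mathcal{B}\cup(\mathcal{B}^\perp\cap\mathcal{S})\big)=\derived(KQ)$, whence $KQ''$ is a compact generator of $\derived(KQ_{\mathcal{B}})=\mathcal{B}^\perp$ and $\perfect(KQ_{\mathcal{B}})=\thick(KQ'')\subseteq\mathcal{S}$. That works, but it is not what you wrote; as stated, the deduction of $KQ_{\mathcal{B}}\simeq KQ''$ is a gap.
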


\begin{proof}
Theorem~\ref{T:TameCompletions}~i) gives us \(\mathfrak{S}'_{\mathcal{M}}(\mathcal{S})=\mathfrak{S}'_{\mathcal{B}}(\mathcal{S})=\derived^b(\modf\dashmodule KQ_{\mathcal{B}})\). The category \(H^0(\mathcal{B})\) is generated by an exceptional sequence as a wide subcategory of \(\modf\dashmodule KQ\), so it is equivalent to a~module category. Hence \(H^0(\mathcal{B})=\wide(P)\) for a~rigid object \(P\in\modf\dashmodule KQ\). By \cite[Proposition~4.2]{KrauseStovicek10}, this forces the universal localisation \(KQ_{\mathcal{B}}\) to be finite dimensional over $K$. It follows that \(\derived^b(\modf\dashmodule KQ_{\mathcal{B}})\subseteq\derived^b(\modf\dashmodule KQ)\), and consequently \(\mathfrak{S}'_{\mathcal{M}}(\mathcal{S})=\mathcal{B}^{\perp}\cap\mathcal{S}\).
\end{proof}

In light of the above corollary, we may conclude the discussion of completions of tame algebras by constating that new objects appear in the completion only for specific metrics determined by countable collections of regular modules. If \(\mathcal{B}\) is generated by preprojective or preinjective modules, it is generated by an exceptional sequence, and the completion \(\mathfrak{S}'_{\mathcal{M}}(\mathcal{S})\) is then a thick subcategory of \(\mathcal{S}\).

For a hereditary finite dimensional algebra $A$ of finite representation type, it is known by \cite[Corollary~3.9]{Matousek26A} that any completion of
\(\derived^b(\modf\dashmodule A)\) is just a thick subcategory \(\derived^b(\modf\dashmodule A)\). One can obtain an alternative proof of this fact by mimicking the argument of Corollary~\ref{C:FiniteDimensionalUniversalLocalisation} - all indecomposable modules of \(A\) are both preprojective and preinjective, so no new object can appear in the completion. 

\begin{example}
\label{E:Kronecker}
Let $Q$ be the Kronecker quiver with a fixed orientation \(\xymatrix{ 2\ar@<3.0pt>[r]\ar@<-3.0pt>[r]& 1}\).
Every regular tube is homogenous, and the simple regulars are of the form 
\[
\xymatrix{R_{(1:k)}=K\ar@<3.0pt>[r]^-{1}
\ar@<-3.0pt>[r]_-{k}&K}
\text{\ \ and\ \ }
\xymatrix{R_{(0:1)}=K\ar@<3.0pt>[r]^-{0}
\ar@<-3.0pt>[r]_-{1}&K}
\text{\ \ for every\ \ }
k\in K
.\]
Fix \(\lambda\in\ProjectiveLine{K}\), and denote \(\mathcal{C}:=\langle R_{\lambda}\rangle\subseteq\mathcal{S}\).
The preprojective modules are precisely
\[
\xymatrix{P_n=K^n\ar@<3.0pt>[rr]^-{\begin{bsmallmatrix}\id_{n\times n}\\0\end{bsmallmatrix}}
\ar@<-3.0pt>[rr]_-{\begin{bsmallmatrix}0\\ \id_{n\times n}\end{bsmallmatrix}}&&K^{n+1}}
\]
for \(n\in\N_0\).
We can arrange the whole preprojective component into a Cauchy sequence \(\mathbf{E}=(P_{n-1},e_n)_{n\in\N}\) with respect to \(\mathcal{C}\) where \(\cone\left(P_{n-1}\xrightarrow{e_n}P_n\right)\simeq R_{\lambda}\) for all \(n\in\N\).
It holds that \(E:=\hoco \mathbf{E}\in\derived(KQ)\) is compactly supported at \(\mathcal{C}\). Under the identification \(\mathfrak{C}'_{\mathcal{C}}(\mathcal{S})\simeq\derived(KQ_{\mathcal{C}})\) within \(\derived(KQ)\), the universal localisation \(KQ_{\mathcal{C}}\) corresponds to \(\End_{KQ}(E\oplus E)\), which is Morita equivalent to \(\End_{KQ}(E)\). We then have
\(
\mathfrak{S}'_{\mathcal{C}}(\mathcal{S})\simeq\derived^b\left(\modf\dashmodule KQ_{\mathcal{C}}\right)\simeq\left\langle E,R_{\mu}\right\rangle_{\lambda\neq\mu\in\ProjectiveLine{K}}.
\) 
\end{example}

\begin{remark}
For an arbitrary Euclidean quiver $Q$, we can build Cauchy sequences of preprojectives with respect to thick subcategories generated by simple regulars in a fashion similar to Example~\ref{E:Kronecker}. For every \(P\in\mathcal{P}\) and \(R\) a simple regular, there exists a \(P'\in\mathcal{P}\) and a short exact sequence \(0\rightarrow P\rightarrow P'\rightarrow R\rightarrow0\) yielding a corresponding distinguished triangle in \(\mathcal{S}\). The construction of such Cauchy sequences has appeared under the name of \textit{special chains} in \cite[Lemma~7.3]{HugelSarochTrlifaj18}.
\end{remark}

% = Bibliography ==============================================================
\bibliographystyle{plain}
\bibliography{literatura}

\end{document}